\newtheorem{theorem}{Theorem}%  meant for continuous numbers
\newtheorem{lemma}[theorem]{Lemma}
\newtheorem{corollary}[theorem]{Corollary}
\newtheorem{example}{Example}%
\newtheorem{remark}{Remark}%
\newtheorem{definition}{Definition}%
\newcommand{\N}{\mathbb{N}}
\newcommand{\R}{\mathbb{R}}
\newcommand{\C}{\mathbb{C}}
\begin{document}

\title[Fox-$H$ densities and completely monotone generalized Wright functions]{Fox-$H$ densities and completely monotone generalized Wright functions}
 
\author[1]{\fnm{Luisa} \sur{Beghin}}\email{luisa.beghin@uniroma1.it}

\author*[2]{\fnm{Lorenzo} \sur{Cristofaro}}\email{lorenzo.cristofaro@uni.lu}

\author[3]{\fnm{Jos{é} Lu{í}s} \sur{da Silva}}\email{joses@staff.uma.pt}

\affil[1]{\orgdiv{Department of Statistical Sciences}, \orgname{Sapienza}, \orgaddress{\street{Piaz.le Aldo Moro, 5}, \city{Rome}, \postcode{00185}, \country{Italy}}}

\affil*[2]{\orgdiv{Department of Mathematics}, \orgname{University of Luxembourg}, \orgaddress{\street{6, avenue de la Fonte}, \city{Esch-sur-Alzette}, \postcode{4364}, \country{Luxembourg}}}

\affil[3]{\orgdiv{Faculty of Exact Sciences and Engineering}, \orgname{University of Madeira}, \orgaddress{\street{Campus da Penteada}, \city{Funchal}, \postcode{9020-105}, \country{Portungal}}}

\abstract{Due to their flexibility, Fox-$H$ functions are widely studied and applied to many research topics, such as astrophysics, mechanical statistic, probability, etc. Well-known special cases of Fox-$H$ functions, such as Mittag-Leffler and Wright functions, find a wide application in the theory of stochastic processes, anomalous diffusions and non-Gaussian analysis.
In this paper, we focus on certain explicit assumptions that allow us to use the Fox-$H$ functions as densities. We then provide a subfamily of the latter, called Fox-$H$ densities with all moments finite, and give their Laplace transforms as entire generalized Wright functions. The class of random variables with these densities is proven to possess a monoid structure. We present eight subclasses of special cases of such densities (together with their Laplace transforms) that are particularly relevant in applications, thanks to their probabilistic interpretation. To analyze the existence conditions of Fox-$H$ functions red as well as their sign, we derive asymptotic results and their analytic extension.}

\pacs[MSC Classification]{26A48, 33E12, 60E05, 60E10}

\maketitle

 \section{Introduction}

The Fox-$H$ functions were defined in \cite{Fox} for the first time, as well as the subclass of the so-called Meijer's $G$-functions. These functions were exploited to collect a wide set of solutions of special differential equations from physics, whose definitions were given by series representations or Mellin-Barnes integrals; see \cite{ErdI}. The analytical properties of the Mellin-Barnes integrals were also studied: for example, \cite{DF36} investigated the convergence of a Mellin-Barnes integral expressed by Fox-$H$ functions. A fundamental contribution to this topic can be found in \cite{Bra64}. \\
In the XX century, many authors studied Fox-$H$ functions thanks to their great flexibility: we can find a wide range of applications in statistics, astrophysics, diffusion problems, probability, and operator theory; see \cite{MS78}, \cite{Spri80}, \cite{Kir}, and \cite{Mai22}, among others.  
During the 1980s, some special cases of $G$-functions were shown to be densities in \cite{SprinThomp} and \cite{CarSprin}. More recently, a class of densities expressed using Fox-$H$ functions was introduced in \cite{Gau18}. 
In \cite{VellKat} and \cite{GupKum22} Fox-$H$ functions were studied in connection with L\'{e}vy stable subordinators and their inverses, respectively. In \cite{KatVell18} Fox-$H$ functions were generalized to a larger class, called $I$-functions, to represent some densities.

Furthermore, we also recall that some statistics applications are presented in \cite{Gau18bis}, \cite{Coel19}, \cite{Spri80}.

In the last thirty years, the link to Fractional Calculus has been established and developed: in this framework, a milestone is represented by \cite{Kir}. Many authors studied the interplay between some transcendental functions of Wright type and time-fractional diffusion processes; see \cite{GLM99}.
The relationships among Fox-$H$ functions and Fractional Calculus were also presented in \cite{Karp2018}, \cite{Karp2018Gfunc}; in particular, Mittag-Leffler functions, Wright functions, Le Roy functions and many others are strictly connected to fractional operators, as shown, for example, in~\cite{GKMR20} and \cite{GHG19}.

One of the most interesting properties of these special functions is complete monotonicity, which allows applications to physics and probability; see, for example, \cite{GKMR20}. In particular, a straightforward application of this property can be found in non-Gaussian analysis, see \cite{S90}, \cite{JahnI} and \cite{BCG23}. 
In \cite{Mehr18} and \cite{KarpPril16}, conditions for the complete monotonicity of certain generalized Wright functions were derived.

 In this paper, we provide some new insight into Fox-$H$ functions and generalized Wright functions to identify them as densities or completely monotone functions. The paper is organized as follows. In Section~\ref{sec:DefinitionExistenceAnaliticalContinuation}, we prove new results on the asymptotic behavior of the integrand (in its Mellin-Barnes integral representation) and the analytic continuation of the corresponding $H$-function.
In Section~\ref{sec:FoxfunctionsAsDensities}, we specify the assumptions on the parameters, to identify certain classes of Fox-$H$ densities. Furthermore, we describe two decomposition procedures that enable the study of the sign of a Fox-$H$ function. In particular, we define an algebraic structure for random variables whose distribution is a Fox-$H$ density with all moments finite. Furthermore, we study the asymptotic behaviour at infinity and at zero for the latter. In Section~\ref{sec:FoxHDensities}, we present eight classes of Fox-$H$ densities together with their Laplace
transforms (LT, hereinafter) which can be expressed by generalized Wright functions; in Section~\ref{sec:Examples}, we collect a few examples of well-known special and elementary functions which can be covered by the classes previously defined; in Section~\ref{Sec:Outlook}, we conclude the paper giving some perspectives for future work.

\section{Fox-$H$ functions: definitions and preliminary results}
\label{sec:DefinitionExistenceAnaliticalContinuation}

We recall here the definitions of Fox-$H$ functions (also named in the literature $H$-functions). Moreover, we give statements of some new results concerning the asymptotic behavior of the function $|\mathcal{H}^{m,n}_{p,q}(s)z^{-s}|$ (for $s \to \infty$) involved in the definition of Fox-$H$ functions, as well as the analytical extension of the associated Fox-$H$ function (for given parameters and contour) in Lemma~\ref{lem:asympHCircle} and Theorem~\ref{thm:ResidueDeltaPosi}.
Their proofs can be found in the Appendix~\ref{sec:AppendixAnalyticalresults}. These results will be used in Section~\ref{sec:FoxfunctionsAsDensities}, where we will use the LT of Fox-$H$ functions defined through the contour $\mathcal{L}=\mathcal{L}_{\mathrm{i}\gamma \infty}$; see Theorem~2.3 in \cite{SaiKil}. We refer the interested reader to the monographs \cite{Mathai2009}, \cite{SaiKil}, and \cite{Fox} for more details of this class of functions.

\begin{definition}[cf.~p.1 in \cite{SaiKil}]\label{def:H-function}
For integers $m,n,p,q$ such that $0 \leq m \leq q$, $0 \leq n \leq p$, for $a_i, b_j \in \C$ and for $\alpha_i,\beta_j \in (0,\infty)$ with $i=1,\dots,p $ and $j=1,\dots,q$, the Fox-$H$ function $H_{p,q}^{m,n}(\cdot)$ is defined via a Mellin-Barnes integral as
	
	\[ 
	H_{p,q}^{m,n}(z):=H_{p,q}^{m,n}\left[z \, \Bigg| \genfrac{}{}{0pt}{}{(a_i,\alpha_i)_p}{(b_j,\beta_j)_q}\right]:=\frac{1}{2\pi \mathrm{i}}\int_{\mathcal{L}} \mathcal{H}^{m,n}_{p,q}(s)z^{-s}\,\mathrm{d}s, \quad z \in \mathbb{C}\backslash \{0\},
	\]
	where 
	\[  
	\mathcal{H}^{m,n}_{p,q}(s):= \frac{\prod_{j=1}^{m} \Gamma({b_j+s\beta_j}) \prod_{i=1}^{n}\Gamma({1-a_i-s\alpha_i})}{ \prod_{i=n+1}^{p}\Gamma({a_i+s\alpha_i})\prod_{j=m+1}^{q} \Gamma({1-b_j
			-s\beta_j})}
	\]
	and $\mathcal{L}$ is the infinite contour that separates all the poles  given by Equation~\eqref{def:polesb} from those given by Equation~\eqref{def:polesa}, see below for details.

\end{definition}
The function $\mathcal{H}^{m,n}_{p,q}(\cdot)$ inherits the poles from the functions $\Gamma(b_j+s\beta_j)$
\begin{equation} \label{def:polesb} b_{jl}=-\frac{b_j+l}{\beta_j}\quad j=1,\dots,m, \quad l=0,1,2,\dots \end{equation}
and the functions $\Gamma(1-a_i-s\alpha_i)$
\begin{equation} \label{def:polesa} a_{ik}=\frac{1-a_i+k}{\alpha_i}\quad i=1,\dots,n,\quad k=0,1,2,\dots. \end{equation}

The above poles do not coincide if $\alpha_i(b_j+l)\neq \beta_j(a_i-k-1)$, for $i=1,\dots,n$,  $j=1,\dots,m$, with $k,l \in \N_0$; see Equation~(1.1.6) in \cite{SaiKil}. \\
The contour $\mathcal{L}$ can take three distinct shapes, which we refer to as $\mathcal{L}_{-\infty}$, $\mathcal{L}_{+\infty}$, and $\mathcal{L}_{\mathrm{i}\gamma\infty}$, defined as follows:
\begin{itemize}
	\item $\mathcal{L}=\mathcal{L}_{-\infty}$ is a left loop situated in a horizontal strip starting at the point $-\infty + \mathrm{i} \phi_1$ and terminating at the point $-\infty + \mathrm{i} \phi_2$ with $-\infty<\phi_1<\phi_2<+\infty$;
	\item $\mathcal{L}=\mathcal{L}_{+\infty}$ is a right loop situated in a horizontal strip starting at the point $+\infty + \mathrm{i} \phi_1$ and terminating at the point $+\infty + \mathrm{i}\phi_2$ with $-\infty<\phi_1<\phi_2<+\infty$;
	\item $\mathcal{L}=\mathcal{L}_{\mathrm{i}\gamma\infty}$ is a contour starting at the point $\gamma -\mathrm{i}\infty$ and terminating at the point $\gamma +\mathrm{i}\infty$, where $\gamma \in \mathbb{R}=(-\infty,+\infty)$.
\end{itemize}
Here $z^{-s}=\exp[-s(\mathrm{Log}(|z|) + \mathrm{i}\arg(z))]$, $z \neq0$, where $\mathrm{Log}(|z|)$ represents the natural logarithm of $|z|$ and $\arg(z)$ is the principal value. 

The properties of the Fox-$H$ function depend on the contour $\mathcal{L}$ and the quantities $a^{*}, \Delta, \delta, \mu, a_1^{*}$ and $a^*_2$, which are defined as
$a^*=a_1^*+a_2^*$, $\Delta=a_1^*-a_2^*$, $\delta=\prod_{j=1}^q\beta_j^{\beta_j} \prod_{i=1}^p\alpha_i^{-\alpha_i}$ and $\mu=\sum_{j=1}^q b_j-\sum_{i=1}^p a_i +(p-q)/2$, where $a_1^*=\sum_{j=1}^m\beta_j-\sum_{i=n+1}^p\alpha_i$  and $a_2^*=\sum_{i=1}^n\alpha_i-\sum_{j=m+1}^q\beta_j$; see Equations~(1.1.9), (1.1.10), (1.1.11), (1.1.12) and (1.1.13) in \cite{SaiKil}.

%(given in terms of the parameters $(a_p,\alpha_p$) and $(b_q,\beta_q)$) 
We will employ the following alternative and more explicit notation when necessary:
\[  H_{p,q}^{m,n}\left[z \, \Bigg| \genfrac{}{}{0pt}{}{(a_i,\alpha_i)_{1,p}}{(b_j,\beta_j)_{1,q}}\right]=H_{p,q}^{m,n}\left[z \, \Bigg| \genfrac{}{}{0pt}{}{(a_p,\alpha_p)}{(b_q,\beta_q)}\right]=H_{p,q}^{m,n}\left[z \,\Bigg| \genfrac{}{}{0pt}{}{(a_1,\alpha_1),\dots, (a_p,\alpha_p)}{(b_1,\beta_1),\dots,(b_q,\beta_q)}\right] .\]
In the case $p=1$ or $q=1$ we will use the short notation: $(a_i,\alpha_i)_{1,p}=(a,\alpha)$ or $(b_j,\beta_j)_{1,q}=(b,\beta)$, respectively.  A well-known subclass of Fox-$H$ functions is the Meijer $G$-functions, defined as:
	
	\[ G^{m,n}_{p,q} \left[ x\, \Bigg|\genfrac{}{}{0pt}{}{(a_i)_{1,p}}{(b_j)_{1,q}} \right] :=H^{m,n}_{p,q} \left[ x\, \Bigg|\genfrac{}{}{0pt}{}{(a_i,1)_{1,p}}{(b_j,1)_{1,q}} \right]  .\]

Any empty sum is taken to be zero and any empty product is taken to be $1$.

We recall the definition of the generalized Wright functions, i.e.~$_m\Psi_p(\cdot)$, (a subclass of Fox-$H$ functions also known as Fox-Wright functions) that represents the largest class of special functions that could be represented as the LT of densities using Lemma~\ref{lem:HFiniteDensities} below.

For $m,p \in \N_0$, $A_i,B_j\in \mathbb{C}$ and $\alpha_i,\beta_j \in \R \backslash \{0\}$, with $i=1,\dots,m$ and $j=1,\dots,p$, the generalized Wright  functions are defined by the series \[ _m\Psi_p\left[ \genfrac{}{}{0pt}{}{(A_i, \alpha_i )_{1,m}}{(B_i, \beta_j )_{1,p} }\Bigg| z \right]=\sum_{k\geq0} \frac{\prod_{i=1}^m \Gamma(A_i+\alpha_i k)}{\prod_{j=1}^p \Gamma(B_j+\beta_j k)} \frac{z^k}{k!}, \quad z \in \mathbb{C}.\]
 %which is absolutely convergent for all $z \in \mathbb{C}$ if $\Delta>-1$, and for $|z|<\delta$ and for $|z|=\delta$ and $\Re(\mu)>1/2$ if $\Delta=-1$, see Theorem 1.5 in \cite{KilSvri}. 
 They could be represented by Fox-$H$ functions if $\alpha_i,\beta_j$ are positive: 
	
	\[  _m\Psi_p\left[ \genfrac{}{}{0pt}{}{(A_i, \alpha_i )_{1,m}}{(B_i, \beta_j )_{1,p} }\Bigg| -z \right]=H^{1,m}_{m,p+1}\left[z \, \Bigg| \genfrac{}{}{0pt}{}{(1-A_i, \alpha_i )_{1,m}}{(0,1),(1-B_j, \beta_j )_{1,p} }\right], \quad z \in \mathbb{C} ,\]
 see Equation (5.2) in \cite{KST02}.

In Section \ref{sec:FoxfunctionsAsDensities} we use the LT of Fox-$H$ functions with $\mathcal{L}=\mathcal{L}_{\mathrm{i}\gamma \infty}$ as a contour to represent generalized Wright functions as Fox-$H$. By Theorem~1.2-$(iii)$ in \cite{SaiKil}, we know that LTs are analytic, but we also need to specify the sign of $\Delta$ to know its expression in series. Therefore, we will recover the series representation of Theorem~1.2-$(iii)$ in \cite{SaiKil} by showing that the case $(1.2.14)$ of Theorem~1.1 in \cite{SaiKil} may be considered analytical continuation of case $(1.2.20)$, for a certain value of $\Delta$ and parameters $a_i$, $b_j$, $\alpha_i$, and $\beta_j$. Theorem~\ref{thm:ResidueDeltaPosi} provides the assumptions for the analytical continuation, and Lemma~\ref{lem:asympHCircle} below is needed for its proof. These results are new, to the best of our knowledge.

We present new results concerning the asymptotic behavior of $|\mathcal{H}^{m,n}_{p,q}(s)z^{-s}|$, for $z \in \C \backslash \{0\}$ and $s=\gamma + R\mathrm{e}^{\mathrm{i}\theta}$, with $R \to \infty$, which is used to prove Theorem~\ref{thm:ResidueDeltaPosi} in Appendix~\ref{sec:AppendixAnalyticalresults}.
\begin{lemma} \label{lem:asympHCircle}
	Let $m,n,p,q \in \mathbb{N}_0$ be such that $0 \leq m \leq q$, $0 \leq n \leq p$, $a_i, b_j \in \mathbb{R}$ and $\alpha_i,\beta_j \in (0,\infty)$, with $i=1,\dots,p $ and $j=1,\dots,q$. If the poles $b_{jl}$ given in \eqref{def:polesb}, $j=1,\dots,m$ and $l \in \mathbb{N}_0$, are on the left of $\mathcal{L}_{\mathrm{i}\gamma \infty}$ and the poles $a_{ik}$ given in \eqref{def:polesa}, $i=1,\dots,n$ and $k \in \mathbb{N}_0$, are on the right of $\mathcal{L}_{\mathrm{i}\gamma \infty}$, with $\gamma \in \mathbb{R}$, then, for $s=\gamma + R\mathrm{e}^{\mathrm{i}\theta}$, with $\theta \in (-\pi,\pi)\backslash\{0\}$, $R>0$ and $z \in \mathbb{C}\backslash\{0\}$, the following asymptotic formula 
	holds:
	
	\begin{equation}\label{eq:HKernelonCircle}
			|\mathcal{H}^{m,n}_{p,q}(s)z^{-s}| \sim  C
			\exp(\Upsilon_1R\mathrm{Log}(R)-\Upsilon_2R + \Upsilon_3\mathrm{Log}(R)),
	\end{equation}
    as $R \to \infty$, where $\Upsilon_1=\cos(\theta)\Delta$, $\Upsilon_2=\cos(\theta)\mathrm{Log}(\exp(\Delta)|z|/\delta)+\sin(\theta)\Theta$ and $\Upsilon_3=\mu+\gamma\Delta$ with $\Theta=a_1^* \theta - a_2^* \mathrm{sign}(\theta)(|\theta|-\pi)+\arg(z)$ and the constant $C$ is given by
	\[C=|z|^{-\gamma}(2\pi)^{m+n-(p+q)/2}\frac{\prod_{j=1}^q \beta_j^{b_j+\beta_j\gamma-1/2} }{\prod_{i=1}^p \alpha_i^{a_i+\alpha_i\gamma-1/2}},\]
	
	and $\Delta, \delta, \mu, a_1^*$ and $a_2^*$ are defined in Equations~(1.1.8), (1.1.9), (1.1.10), (1.1.11) and (1.1.12), respectively, in \cite{SaiKil}.
\end{lemma}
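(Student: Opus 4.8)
The plan is to analyze the asymptotic behavior of $|\mathcal{H}^{m,n}_{p,q}(s)z^{-s}|$ directly from the product formula, by applying Stirling's asymptotic expansion to each gamma factor along the ray $s=\gamma+R\mathrm{e}^{\mathrm{i}\theta}$. First I would recall the complex Stirling formula in the form $\mathrm{Log}\,\Gamma(w)=(w-1/2)\mathrm{Log}(w)-w+\tfrac{1}{2}\mathrm{Log}(2\pi)+O(1/|w|)$, valid as $|w|\to\infty$ uniformly in any sector $|\arg w|\le\pi-\varepsilon$; since $\theta\in(-\pi,\pi)\setminus\{0\}$ is fixed and each $\beta_j,\alpha_i>0$, every argument $b_j+s\beta_j$ and $a_i+s\alpha_i$ eventually lies in such a sector (with argument tending to $\theta$), so Stirling applies to each factor. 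For the "reflected" factors $\Gamma(1-a_i-s\alpha_i)$ and $\Gamma(1-b_j-s\beta_j)$, whose arguments tend to angle $\theta-\pi\,\mathrm{sign}(\theta)$ (or one uses the reflection formula $\Gamma(w)\Gamma(1-w)=\pi/\sin(\pi w)$ to trade them for $\Gamma(a_i+s\alpha_i)$ times an exponentially controlled sine factor), I would again be in the regime where Stirling or the reflection identity gives a clean expansion. The hypothesis that all poles $b_{jl}$ lie left of the contour and all $a_{ik}$ lie right of it guarantees we never hit a pole along the ray for large $R$, so the asymptotics are genuinely uniform.

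Next I would collect the contributions. Writing $\mathrm{Log}|\mathcal{H}^{m,n}_{p,q}(s)z^{-s}|=\mathrm{Re}\big(\mathrm{Log}\,\mathcal{H}^{m,n}_{p,q}(s)\big)-\mathrm{Re}(s\,\mathrm{Log}\,z)$, I substitute the Stirling expansions. Each factor contributes a term of the shape $(\text{argument}-1/2)\mathrm{Log}(\text{argument})-(\text{argument})$; expanding $\mathrm{Log}(b_j+s\beta_j)=\mathrm{Log}(s\beta_j)+\mathrm{Log}(1+b_j/(s\beta_j))=\mathrm{Log}(R)+\mathrm{i}\theta+\mathrm{Log}\beta_j+O(1/R)$ and similarly for the others, I organize the sum by powers of $R$: the $R\,\mathrm{Log}(R)$ coefficient comes only from the leading $s\cdot\mathrm{Log}(s)$ pieces and collapses, after taking real parts, to $\cos(\theta)$ times $\big(\sum_{j=1}^m\beta_j-\sum_{i=n+1}^p\alpha_i+\sum_{i=1}^n\alpha_i-\sum_{j=m+1}^q\beta_j\big)=\cos(\theta)\,a_1^*$... wait, more carefully the signs from numerator versus denominator must be tracked, and the reflected factors flip sign, yielding exactly $\cos(\theta)\,\Delta$ with $\Delta=a_1^*-a_2^*$; this identifies $\Upsilon_1$. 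The pure $R$ coefficient (no logarithm) gathers the $-s$ terms plus the $\mathrm{i}\theta+\mathrm{Log}\beta_j$, $\mathrm{Log}\alpha_i$ pieces multiplied into the $s$'s, plus $-\mathrm{Re}(s\,\mathrm{Log}\,z)$; after taking real parts this produces the $\cos(\theta)$-part $-\cos(\theta)\mathrm{Log}(\mathrm{e}^{\Delta}|z|/\delta)$ and a $\sin(\theta)$-part which must be shown equal to $-\sin(\theta)\Theta$ with $\Theta=a_1^*\theta-a_2^*\,\mathrm{sign}(\theta)(|\theta|-\pi)+\arg z$ — here the $-\pi$ shift is precisely the phase of the reflected gamma arguments, and this is the delicate bookkeeping step. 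Finally the $\mathrm{Log}(R)$ coefficient aggregates the "$-1/2$" shifts from every Stirling term, the $\gamma\cdot(\text{slopes})$ cross terms, giving $\mu+\gamma\Delta=\Upsilon_3$, while the remaining $O(1)$ terms assemble into the constant $C$ (the $(2\pi)^{m+n-(p+q)/2}$ from the $\tfrac12\mathrm{Log}(2\pi)$ factors, the $\beta_j^{b_j+\beta_j\gamma-1/2}$ and $\alpha_i^{a_i+\alpha_i\gamma-1/2}$ from expanding $\mathrm{Log}(b_j+s\beta_j)$ etc. to the constant order, and $|z|^{-\gamma}$ from $-\mathrm{Re}(\gamma\,\mathrm{Log}\,z)$).

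The main obstacle, as flagged above, is the careful treatment of the reflected gamma factors $\Gamma(1-a_i-s\alpha_i)$ and $\Gamma(1-b_j-s\beta_j)$: for $\theta\in(0,\pi)$ their arguments have phase approaching $\theta-\pi\in(-\pi,0)$, for $\theta\in(-\pi,0)$ approaching $\theta+\pi\in(0,\pi)$, which is exactly the source of the $\mathrm{sign}(\theta)(|\theta|-\pi)$ term in $\Theta$ and the $a_2^*$-contribution to $\Delta$ with the correct sign. One must also verify that these factors do not blow up or vanish too fast — i.e. that $\mathrm{Re}$ of their Stirling exponent stays at the stated order and the error terms remain $o(\mathrm{Log}(R))$ — which follows since $|1-a_i-s\alpha_i|\sim R\alpha_i\to\infty$ and the phase stays bounded away from $\pm\pi$ for fixed $\theta\ne0$. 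A clean way to sidestep some of this is to use the reflection formula to rewrite the whole kernel as a ratio of $\prod\Gamma(b_j+s\beta_j)\prod\Gamma(a_i+s\alpha_i)^{\pm1}$ times products of $\pi/\sin(\pi(\cdot))$, apply Stirling only to "forward" gammas, and handle $|\sin(\pi w)|\sim\tfrac12\mathrm{e}^{\pi|\mathrm{Im}\,w|}$ separately; I would check both routes give the same $\Upsilon_1,\Upsilon_2,\Upsilon_3,C$ and present whichever yields the shortest write-up. The remaining steps — absorbing $O(1/R)$ errors into the $\sim$, and noting the formula degenerates correctly when some of $m,n$ equal $0$ (empty products) — are routine.
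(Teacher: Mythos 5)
Your proposal is correct and follows essentially the same route as the paper: the paper likewise applies Stirling-type asymptotics factor by factor, using the modulus asymptotics of $\Gamma(b+as)$ and, for the reflected factors, of $\Gamma(b-as)$ obtained via the substitution $\tilde{s}=-s$ with $\arg(\tilde{s})=\mathrm{sign}(\theta)(|\theta|-\pi)$ (exactly the phase-shift bookkeeping you flag as the delicate step), and then multiplies the contributions together with $|z^{-s}|$ to identify $\Upsilon_1$, $\Upsilon_2$, $\Upsilon_3$ and $C$. Your term-collection by powers of $R$ (the $R\,\mathrm{Log}(R)$, $R$, $\mathrm{Log}(R)$ and constant orders) matches the paper's computation, so no gap here.
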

\begin{proof}
    The proof is provided in Appendix~\ref{sec:AppendixAnalyticalresults}.
\end{proof}
 The following theorem specifies that, under certain assumptions on the parameters, the change in contour corresponds to an analytical extension of a Fox-$H$ function $H^{m,n}_{p,q}$.
\begin{theorem}\label{thm:ResidueDeltaPosi}
Let $m,n,p,q \in \mathbb{N}_0$, $a_i, b_j \in \mathbb{R}$ and let $\alpha_i,\beta_j \in (0,\infty)$, with $i=1,\dots,p $ and $j=1,\dots,q$, be such that $\Delta>0$ and $a^*>0$. The Fox-$H$ function $H^{m,n}_{p,q}(\cdot)$ defined on $\mathbb{C}\backslash \{0\}$ by choosing $\mathcal{L}=\mathcal{L}_{-\infty}$
extends analytically $H^{m,n}_{p,q}(z)$ defined on $|\arg(z)|<a^* \pi/2,z\neq0$ by choosing $\mathcal{L}=\mathcal{L}_{\mathrm{i}\gamma \infty}$.  
\end{theorem}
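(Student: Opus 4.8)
The plan is to show that the two Mellin--Barnes representations of $H^{m,n}_{p,q}$ coincide on the common domain $\{z\neq0:\ |\arg(z)|<a^*\pi/2\}$. Since $\Delta>0$ and $a^*>0$ force $a_1^*=(a^*+\Delta)/2>0$, Theorem~1.1 in \cite{SaiKil} applies to the contour $\mathcal{L}_{-\infty}$ and shows that the corresponding $H$-function equals the series of residues of $\mathcal{H}^{m,n}_{p,q}(s)z^{-s}$ at the left poles $b_{jl}$ of \eqref{def:polesb}, this series being convergent and analytic on $\mathbb{C}\backslash\{0\}$ (on the Riemann surface of the logarithm where the exponents are non-integer). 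It therefore suffices to prove that the $\mathcal{L}_{\mathrm{i}\gamma\infty}$-function equals this same residue series for every $z$ with $|\arg(z)|<a^*\pi/2$; the asserted \emph{analytic continuation} is then immediate, the right-hand side being analytic on the strictly larger domain.

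Fix such a $z$. Because $\alpha_i,\beta_j>0$ and $a_i,b_j\in\mathbb{R}$, the poles $b_{jl}$ are real, lie in $\{\mathrm{Re}(s)<\gamma\}$, and accumulate only at $-\infty$, while the poles $a_{ik}$ of \eqref{def:polesa} are real and lie in $\{\mathrm{Re}(s)>\gamma\}$. Choose an increasing sequence $R_N\to\infty$ such that the circle $|s-\gamma|=R_N$ avoids every pole of $\mathcal{H}^{m,n}_{p,q}$ (possible since these poles form a discrete set), and let $\mathcal{C}_N$ be the positively oriented closed contour consisting of the segment $S_N=\{\gamma+\mathrm{i}t:\ |t|\leq R_N\}$ of $\mathcal{L}_{\mathrm{i}\gamma\infty}$, traversed upwards, followed by the left semicircular arc $\mathcal{A}_N=\{\gamma+R_N\mathrm{e}^{\mathrm{i}\theta}:\ \theta\in[\pi/2,3\pi/2]\}$. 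The region bounded by $\mathcal{C}_N$ is the left half-disk, so it contains exactly the poles $b_{jl}$ with $|b_{jl}-\gamma|<R_N$ and no pole $a_{ik}$, and the residue theorem gives
\[
\frac{1}{2\pi\mathrm{i}}\int_{S_N}\mathcal{H}^{m,n}_{p,q}(s)z^{-s}\,\mathrm{d}s+\frac{1}{2\pi\mathrm{i}}\int_{\mathcal{A}_N}\mathcal{H}^{m,n}_{p,q}(s)z^{-s}\,\mathrm{d}s=\sum_{b_{jl}:\,|b_{jl}-\gamma|<R_N}\operatorname*{Res}_{s=b_{jl}}\bigl[\mathcal{H}^{m,n}_{p,q}(s)z^{-s}\bigr].
\]

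Now I let $N\to\infty$. Since $a^*>0$ and $|\arg(z)|<a^*\pi/2$, the Mellin--Barnes integral over $\mathcal{L}_{\mathrm{i}\gamma\infty}$ converges absolutely (Theorem~1.2 in \cite{SaiKil}), so $\frac{1}{2\pi\mathrm{i}}\int_{S_N}\to H^{m,n}_{p,q}(z)$ for the contour $\mathcal{L}_{\mathrm{i}\gamma\infty}$, while the right-hand side tends to the full residue series, i.e.\ to $H^{m,n}_{p,q}(z)$ for the contour $\mathcal{L}_{-\infty}$. It remains to check that $\int_{\mathcal{A}_N}\to0$; the single point $\theta=\pi$ (where $s=\gamma-R_N$, finite by the choice of $R_N$) contributes nothing, and on the rest of $\mathcal{A}_N$ I invoke Lemma~\ref{lem:asympHCircle} with $R=R_N$ and $\theta\in[\pi/2,\pi)\cup(-\pi,-\pi/2]$: there $\Upsilon_1=\Delta\cos(\theta)\leq0$, with equality only at $\theta=\pm\pi/2$. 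I split $\mathcal{A}_N$ into the part where $|\cos(\theta)|\geq\varepsilon_0$ and two short arcs around $\theta=\pm\pi/2$. On the first part $\Upsilon_1\leq-\Delta\varepsilon_0<0$, so by \eqref{eq:HKernelonCircle} the integrand is $O\bigl(\exp(-\Delta\varepsilon_0R_N\mathrm{Log}(R_N)+O(R_N))\bigr)$. On the short arcs $\Upsilon_1\leq0$, and since $\Upsilon_2$ equals $\Theta$ at $\theta=\pi/2$ and $-\Theta$ at $\theta=-\pi/2$, which evaluate to $a^*\pi/2+\arg(z)$ and $a^*\pi/2-\arg(z)$ respectively, continuity yields $\Upsilon_2\geq a^*\pi/2-|\arg(z)|-\varepsilon_1>0$ there for a suitably small $\varepsilon_1$, hence the integrand is $O\bigl(\exp(-(a^*\pi/2-|\arg(z)|-\varepsilon_1)R_N+O(\mathrm{Log}(R_N)))\bigr)$. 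In both cases the supremum over the arc decays faster than $R_N^{-1}$, and since $\mathcal{A}_N$ has length $\pi R_N$, we conclude $\int_{\mathcal{A}_N}\to0$. Therefore $H^{m,n}_{p,q}(z)$ computed with $\mathcal{L}_{\mathrm{i}\gamma\infty}$ equals the residue series, which equals $H^{m,n}_{p,q}(z)$ computed with $\mathcal{L}_{-\infty}$, whenever $|\arg(z)|<a^*\pi/2$.

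The main obstacle is the estimate $\int_{\mathcal{A}_N}\to0$, and inside it the control of the arc near $\theta=\pm\pi$ (the negative real direction), where the poles $b_{jl}$ cluster: one must keep $\mathcal{A}_N$ off these poles (handled by the choice of $R_N$) and, crucially, one needs \eqref{eq:HKernelonCircle} to hold with sufficient uniformity in $\theta$ up to---but excluding---$\pm\pi$, where the decay is supplied by the dominant term $\Upsilon_1R\mathrm{Log}(R)$ with $\Upsilon_1=\Delta\cos(\theta)$ bounded away from $0$; this uniform form of the asymptotics is exactly what the proof of Lemma~\ref{lem:asympHCircle} in Appendix~\ref{sec:AppendixAnalyticalresults} provides. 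A secondary, routine point is the bookkeeping that $\mathcal{C}_N$ encloses the $b_{jl}$ but never the $a_{ik}$, together with the appeal to Theorem~1.1 in \cite{SaiKil} guaranteeing convergence of the residue series and its identification with the $\mathcal{L}_{-\infty}$ integral.
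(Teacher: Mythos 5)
Your proposal follows essentially the same route as the paper's proof: close the vertical contour $\mathcal{L}_{\mathrm{i}\gamma\infty}$ with a left semicircular arc of radii $R_N$ chosen to avoid the poles $b_{jl}$, apply the residue theorem, show the arc integral vanishes via Lemma~\ref{lem:asympHCircle}, and identify the resulting residue series with the $\mathcal{L}_{-\infty}$ representation, from which the analytic continuation is immediate. If anything, your explicit handling of the arc near $\theta=\pm\pi/2$, where the decay comes from $\Upsilon_2\geq a^{*}\pi/2-|\arg(z)|-\varepsilon_1>0$ rather than from the $\Delta\cos(\theta)R\,\mathrm{Log}(R)$ term, is slightly more detailed than the paper's argument, which only invokes the dominant term and a reference to the classical Barnes-integral estimates.
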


\begin{proof}
    The proof is provided in Appendix~\ref{sec:AppendixAnalyticalresults}.
\end{proof}

 Depending on the the multiplicity of the poles, the residue of $\mathcal{H}^{m,n}_{p,q}(s)z^{-s}$ may lead to different series representation, see Sec.~1.4 in \cite{SaiKil}. As in the following sections we are interested in Fox-$H$ functions of the form $H^{m,0}_{p,m}(x)$ we give its asymptotic behaviour at infinity and at zero in Corollary ~\ref{prop:AsymptotHDensity}.

\section{Fox-$H$ densities and their Laplace transform}
  \label{sec:FoxfunctionsAsDensities}
		In this section, we first introduce the classes of Fox-$H$ densities which will be used in what follows; see Definition \ref{defHdengeneral}. In Lemma~\ref{lem:AllHdensity} we give assumptions on the parameters under which the corresponding Fox-$H$ function becomes a density and also compute its LT. After Lemma~\ref{lem:AllHdensity} we present two procedures for checking the non-negativity of a Fox-$H$ function in Theorem~\ref{thm:FoxHPosi}.
        In Lemma~\ref{lem:HFiniteDensities} we find the class of Fox-$H$ densities with all the moments finite, briefly called FHdam, and we also express the LT of such densities as generalized Wright functions. Then we study the set of random variables in FHdam as an algebraic structure in Corollary~\ref{lem:ClosureLemma} and the asymptotic behavior of FHdam in Corollary~\ref{prop:AsymptotHDensity}. We conclude this section with Examples~\ref{exa:GammaIncompleteDensity} and \ref{exa:threeparameterWrightdensity} where we give a practical application of Lemmas~\ref{lem:AllHdensity} and \ref{lem:HFiniteDensities}, respectively. The LT of a function $f:\R^+_0 \to\R$ defined by $(\mathscr{L}f)(s):=\int_0^\infty \mathrm{e}^{-st}f(t)\,\mathrm{d}t$, $s\geq0$.

\begin{definition}[Def.~3.1~\cite{CarSprin}]
	\label{defHdengeneral} A random variable $X$, on the probability space $(\Omega, \mathcal{F}, \mathbb{P})$, with density function given by
	\begin{equation} \varrho(x)=
		\begin{cases}
			\frac{1}{K}H^{m,n}_{p,q}\left[ c x \, \bigg|\displaystyle{ \genfrac{}{}{0pt}{}{(a_{1},\alpha_{1}),\dots,(a_{p},\alpha_{p})}{(b_{1},\beta_{1}),\dots,(b_{q},\beta_{q})}}\right] ,& x>0,\\
			0,& \text{otherwise,}
		\end{cases}
	\end{equation}
	is called Fox-$H$ random variable and $\varrho(\cdot)$ is its Fox-$H$ density.\\
	The parameters $K,c,a_i,\alpha_i,b_j$ and $\beta_j$, for $i=1,\dots,p$ and $j=1,\dots,q$, are chosen such that $\varrho(x)\geq 0$, $x \in \R$, and
	\[ \int_{0}^{\infty} \varrho(x)\,\mathrm{d}x=1.  \]
	
\end{definition}
\begin{remark}
	In \cite{CarSprin}, the authors use $k=1/K$ and call the above random variable $X$ "Fox-$H$ function variate" instead; see also \cite{MatSax69}. A further insight about the parameters $K, b_j,a_i,\beta_j,\alpha_i$, for $j=1,\dots,m$ and $i=1,\dots,n$, is given in Theorem 4.6 in \cite{Cook}.  
\end{remark}

Some properties of the Fox-$H$ random variables are presented in Appendix~\ref{sec:AppendixFoxHRandomVariable}. The following two lemmas state the assumptions on a Fox-$H$ function and its parameters under which it is a density. After the first lemma, we discuss an approach to verify the sign of a given Fox-$H$ function based on integral decomposition. In the second (Lemma~\ref{lem:HFiniteDensities}) we define the Fox-$H$ density with all the moments finite and present the result on the complete monotonicity of generalized Wright functions.

\begin{lemma}\label{lem:AllHdensity}
			Let $m,n,p,q \in \mathbb{N}_0$ be such that $p\geq n\geq 0$ and $q\geq m \geq 0$. We make the following assumptions on the parameters of the Fox-$H$ function:
			\begin{enumerate}
				\item Let  $a_i \in \mathbb{R}$ and $\alpha_i>0$, for $i=1,\dots,p$, such that $a_i + \alpha_i<1$, for $i=1,\dots,n$, and $a_i+\alpha_i>0$, for $i=n+1,\dots,p$ and that the poles $a_{ik}$ in Equation~\eqref{def:polesa} are simple; \\
				\item Let $b_j \in \mathbb{R}$ and $\beta_j>0$, for $j=1,\dots,q$, such that $b_j+\beta_j>0$, for $j=1,\dots,m$, and $b_j + \beta_j<1$, for $j=m+1,\dots,q$;\\
				\item Let either $a^*>0$ or $a^*=0$, $\mu<-1$;
				\item Let $H^{m,n}_{p,q} \left[ \cdot \,\big|\genfrac{}{}{0pt}{}{(a_i,\alpha_i)_{1,p}}{(b_j,\beta_j)_{1,q}} \right]$ be non-negative on $(0,\infty)$.
			\end{enumerate}
			Then we have that the following function is a Fox-$H$ density on $(0,\infty)$:
			
			\[  \varrho(x)=\frac{1}{K}H^{m,n}_{p,q} \left[ x \,\Bigg|\genfrac{}{}{0pt}{}{(a_i,\alpha_i)_{1,p}}{(b_j,\beta_j)_{1,q}} \right], \quad x>0,\]
			where $K=\mathcal{H}^{m,n}_{p,q}(1)$.\\
			Furthermore, the LT $\phi(\cdot):=(\mathscr{L}\varrho)(\cdot)$ is given, for every $s \geq 0$, by
			\[ \phi(s)=\int_0^\infty \mathrm{e}^{-sx } \varrho(x)\mathrm{d}x=\frac{1}{K}H^{n+1,m}_{q,p+1}\left[ s \,\bigg| \genfrac{}{}{0pt}{}{(1-b_j-\beta_j,\beta_j)_{1,q}}{(0,1),(1-a_i-\alpha_i,\alpha_i)_{1,p}}\right].  \]
		\end{lemma}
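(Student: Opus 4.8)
The plan is to verify, in order, that (i) $\varrho$ is well defined, (ii) it integrates to $1$, (iii) its Laplace transform has the asserted form. Non-negativity is assumption (4), so nothing has to be done there. First I would check that the Fox-$H$ function is well defined as a function on $(0,\infty)$: assumptions (1) and (2) guarantee, via the conditions $a_i+\alpha_i<1$ for $i\le n$ and $b_j+\beta_j>0$ for $j\le m$, that the poles $a_{ik}$ of \eqref{def:polesa} lie strictly in the right half-plane $\{\mathrm{Re}(s)>0\}$ and the poles $b_{jl}$ of \eqref{def:polesb} lie strictly in the left half-plane $\{\mathrm{Re}(s)<0\}$; hence the contour $\mathcal{L}_{\mathrm{i}\gamma\infty}$ with $\gamma=0$ (or any small $\gamma$) separates them, and the simplicity hypothesis on the $a_{ik}$ together with assumption (3) ($a^*>0$, or $a^*=0$ and $\mu<-1$) places us in the convergence regime of Theorem~1.1 in \cite{SaiKil}, so the Mellin--Barnes integral converges and defines $H^{m,n}_{p,q}(x)$ for $x>0$ (for $x\ge 0$ including a limit at $0$ when needed for integrability).

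Next I would compute the total mass. The natural tool is the Mellin transform of a Fox-$H$ function: $\int_0^\infty x^{s-1} H^{m,n}_{p,q}[x\mid\cdots]\,\mathrm{d}x=\mathcal{H}^{m,n}_{p,q}(s)$ on the appropriate strip (this is the Mellin inversion pairing built into Definition~\ref{def:H-function}). Setting $s=1$ gives $\int_0^\infty H^{m,n}_{p,q}[x\mid\cdots]\,\mathrm{d}x=\mathcal{H}^{m,n}_{p,q}(1)=K$, so $\int_0^\infty \varrho(x)\,\mathrm{d}x=1$. The point that needs care is the legitimacy of evaluating the Mellin transform at $s=1$: one must check that $s=1$ lies in the fundamental strip of analyticity of $\mathcal{H}^{m,n}_{p,q}$, i.e. to the right of all the $b_{jl}$ and to the left of all the $a_{ik}$ — but this is exactly what $b_j+\beta_j>0$ (so the rightmost $b_{jl}=-b_j/\beta_j<1$, in fact $<0$) and $a_i+\alpha_i<1$ (so the leftmost $a_{ik}=(1-a_i)/\alpha_i>1$) deliver from assumptions (1)--(2); the behaviour at the endpoints $0$ and $\infty$ that guarantees absolute convergence of the integral is controlled by assumption (3) together with Corollary~\ref{prop:AsymptotHDensity}-type asymptotics (algebraic decay governed by $\mu$ when $a^*=0$, and exponential-type control when $a^*>0$).

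For the Laplace transform I would use the standard Mellin-convolution identity: write $\mathrm{e}^{-sx}$ via the Mellin--Barnes representation $\mathrm{e}^{-sx}=\frac{1}{2\pi\mathrm{i}}\int \Gamma(w)(sx)^{-w}\,\mathrm{d}w$, substitute the Mellin--Barnes integral for $H^{m,n}_{p,q}$, interchange the (absolutely convergent) integrals, and perform the $x$-integration, which produces a Beta-type factor $\mathcal{H}^{m,n}_{p,q}(\cdot)$ evaluated along a shifted variable; re-indexing the resulting double contour integral as a single Mellin--Barnes integral and matching Gamma factors gives precisely $H^{n+1,m}_{q,p+1}$ with the stated parameter array — the $(0,1)$ entry in the bottom row is the trace of the extra $\Gamma(w)$ from the exponential, and the swap $m\leftrightarrow n$, $p\leftrightarrow q$ together with $a_i\mapsto 1-a_i-\alpha_i$, $b_j\mapsto 1-b_j-\beta_j$ is the familiar reflection that the Laplace transform induces on Fox-$H$ parameters (it is the same mechanism as the $H$-representation of $_m\Psi_p$ recalled after Definition~\ref{def:H-function}, cf.\ Equation~(5.2) in \cite{KST02} and Theorem~2.3 in \cite{SaiKil}). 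Alternatively, and perhaps more cleanly, one can cite Theorem~2.3 in \cite{SaiKil} directly for the Laplace transform of a Fox-$H$ function defined through $\mathcal{L}_{\mathrm{i}\gamma\infty}$ and just verify that the present hypotheses (1)--(3) meet its assumptions, reading off the output parameters.

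The main obstacle is the bookkeeping in the Laplace-transform step: ensuring the interchange of integrations is justified (uniform/absolute convergence, which again leans on the asymptotics of $\mathcal{H}^{m,n}_{p,q}$ on vertical lines and on assumption (3) to keep $|\mathcal{H}^{m,n}_{p,q}(s)z^{-s}|$ integrable — this is where Lemma~\ref{lem:asympHCircle} and the convergence analysis of Section~\ref{sec:DefinitionExistenceAnaliticalContinuation} pull their weight), and then correctly tracking how $(m,n,p,q)$ and the parameter pairs permute into $H^{n+1,m}_{q,p+1}$. Everything else — well-definedness, positivity, unit mass — is either an assumption or a one-line Mellin evaluation.
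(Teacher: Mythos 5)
Your proposal is correct in substance, but it reaches the normalization constant by a genuinely different route than the paper. You compute $\int_0^\infty H^{m,n}_{p,q}(x)\,\mathrm{d}x$ as the Mellin transform of the $H$-function evaluated at $s=1$, using assumptions (1)--(2) to place $s=1$ strictly inside the fundamental strip (rightmost pole $-b_j/\beta_j<1$, leftmost pole $(1-a_i)/\alpha_i>1$) and assumption (3) for convergence at the endpoints; this is essentially an appeal to Theorem~2.2 in \cite{SaiKil}, the same tool the paper reserves for the moments in Lemma~\ref{lem:HFiniteDensities}. The paper instead argues in an Abelian way: since $H^{m,n}_{p,q}\geq 0$, the $L^1$-norm equals $\lim_{s\to 0^+}\mathscr{L}(H^{m,n}_{p,q})(s)$, the LT is identified as $H^{n+1,m}_{q,p+1}$ via Theorem~2.3 in \cite{SaiKil} (your own "cleaner alternative" for the LT step, so there you coincide), and then the expansion of that $H$-function near zero (Theorem~1.11 in \cite{SaiKil}) gives the constant term $h_1^*=\mathcal{H}^{m,n}_{p,q}(1)$, with assumptions (1)--(2) entering precisely to make the remaining exponents $(1-a_i-\alpha_i)/\alpha_i$ positive so those terms vanish; this simultaneously settles the value of $\phi$ at $s=0$, a point your sketch leaves implicit. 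Your Mellin-at-$s=1$ argument is arguably more direct, at the price of having to verify the hypotheses of the Mellin-transform theorem (especially in the borderline case $a^*=0$, $\mu<-1$), whereas the paper's route leans only on the LT theorem plus local asymptotics and exploits non-negativity for the limit interchange. Two small corrections to your write-up: from $b_j+\beta_j>0$ you only get $-b_j/\beta_j<1$, not $<0$, so the separating contour $\mathcal{L}_{\mathrm{i}\gamma\infty}$ may require $\gamma$ near $1$ rather than $\gamma=0$ or "small" $\gamma$; neither affects the argument, since assumptions (1)--(2) still guarantee a separating vertical line exists.
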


		\begin{proof}

	By Assumption 4 about the non-negativity of $H^{m,n}_{p,q}(\cdot)$, we have that 		
			\begin{eqnarray*}   \| H^{m,n}_{p,q}\|_{L^1(\mathbb{R}^+)}&=&\lim_{s \to 0^+} \mathscr{L}(H^{m,n}_{p,q})(s)\\
				&=&\lim_{s \to 0^+} H^{n+1,m}_{q,p+1}\left[ s \, \bigg| \genfrac{}{}{0pt}{}{(1-b_j-\beta_j,\beta_j)_{1,q}}{(0,1),(1-a_i-\alpha_i,\alpha_i)_{1,p}}\right],
			\end{eqnarray*}
			where the second equality follows from Theorem~2.3 in \cite{SaiKil} and Equations~(2.1.3) and (2.1.5) in \cite{SaiKil}.
			 By Assumption 3, we may apply Theorem~1.11 in \cite{SaiKil} to the  Fox-$H$ function on the right-hand side and obtain
			
			\begin{multline}
				\label{seriesexpansion} H^{n+1,m}_{q,p+1}\left[s \, \Bigg| \genfrac{}{}{0pt}{}{(1-b_j-\beta_j,\beta_j)_{1,q}}{(0,1),(1-a_i-\alpha_i,\alpha_i)_{1,p}}\right] \\
				= h_1^*s^0+\sum_{j=2}^{n+1}\big[ h_j^* s^{(1-a_{j-1}-\alpha_{j-1})/\alpha_{j-1}}+o(s^{(1-a_{j-1}-\alpha_{j-1})/\alpha_{j-1}})\big],
			\end{multline}

			where $h_j^*$, $j=2,\dots,n+1$, are given by Equation~(1.8.5) in \cite{SaiKil} and $h_1^*$ by
			\begin{equation}\label{constantspaceweight}  h_1^*=\frac{\prod_{i=1}^{n} \Gamma(1-a_i-\alpha_i) \prod_{i=1}^{m}\Gamma(b_i+\beta_i)}{ \prod_{i=m+1}^{q}\Gamma(1-b_i-\beta_i)\prod_{i=n+1}^{p} \Gamma(a_i + \alpha_i)}=\mathcal{H}^{m,n}_{p,q}(1). \end{equation}
			Using Assumptions 1 and 2 on the parameters, the claim $\| H^{m,n}_{p,q}\|_{L^1(\mathbb{R}^+)}=\mathcal{H}^{m,n}_{p,q}(1)$ follows taking the limit as $s \to 0^+$ in \eqref{seriesexpansion}. This completes the proof of the lemma.		\end{proof}

The assumption $(4)$ in the previous lemma is generally not easy to verify for a given Fox-$H$ function, see \cite{KSW20} and \cite{Kle90}. In \cite{K2021}, Kemppainen investigated the positivity of specific Fox-$H$ functions that arise as Green functions of fractional PDEs. The method in \cite{K2021} is useful for cases where the parameters involved are well identified. Indeed, the argument presented is based on the series representation of the Fox-$H$ function, its Laplace transform, and Bernstein's theorem. However, it seems that there is no direct formula that could be used to check the sign of a Fox-$H$ function.\\
In the following, instead, we propose a decomposition method aimed at deducing the non-negativity of a Fox-$H$ function $H^{m,n}_{p,q} \left[ x \,\big|\genfrac{}{}{0pt}{}{(a_i,\alpha_i)_{1,p}}{(b_j,\beta_j)_{1,q}} \right]$, $x>0$, with $m,n,a^*_{H^{m,n}_{p,q}}>0,$ by exploiting Fox-$H$ functions with a lower number of parameters. Specifically, we provide two methods for decomposing a Fox-$H$ function based on the possible rewriting $(a_i,\alpha_i)_{1,p}$. Analogous approaches could be formulated by rewriting the parameters $(b_j,\beta_j)_{1,q}$.\\
The first procedure, called $\mathbf{A}$, is based on Corollary~2.9.1 in \cite{SaiKil} and is divided into three steps. The first step enables us to identify two Fox-$H$ functions: $H^{m,0}_{0,q}(\cdot)$ and $H^{0,N}_{P,0}(\cdot)$, which represent a given Fox-$H$ function $H^{m,n}_{p,q}(\cdot)$ as an integral transform. The second step (resp.~third step) associates other two possible Fox-$H$ functions, i.e.~$H^{m',0}_{0,q'}(\cdot)$ and $H^{h,0}_{0,h}(\cdot)$ (resp.~$H^{0,N'}_{P',0}(\cdot)$ and $H^{0,g}_{g,0}(\cdot)$), to $H^{m,0}_{0,q}(\cdot)$ (resp.~$H^{0,N}_{P,0}(\cdot)$), see steps $\mathbf{A}\text{-}(2)$ and $(3)$ for details. \\
The second procedure, called $\mathbf{B}$, applies Theorem~2.9 in \cite{SaiKil} to a given Fox-$H$ function $H^{m,n}_{p,q}(\cdot),$ decomposing it through $H^{m,0}_{0,q}(\cdot)$ and $H^{M,0}_{0,Q}(\cdot)$ as an integral transform in the first step. The second step of the procedure $\mathbf{B}$ uses step $\mathbf{A}\text{-}(2)$ on $H^{m,0}_{0,q}(\cdot)$ and $H^{M,0}_{0,Q}(\cdot)$.
\\
In any case, studying the sign of the Fox-$H$ functions $H^{\cdot,0}_{0,\cdot}(\cdot)$ and $H^{0,\cdot}_{\cdot,0}(\cdot)$ is easier.
\begin{description}
\item[$\mathbf{A}$] 
\begin{enumerate}
  \item We represent a given $H^{m,n}_{p,q}$ with $n=N$ and $p=P$ by using Corollary~2.9.1 in \cite{SaiKil}, identifying $(a_i,\alpha_i)_{1,p}=(c^A_i+\eta^A_1 \gamma^A_i, \sigma^A_1 \gamma^A_i)_{1,P}$, such that assumption~(2.8.3) in \cite{SaiKil} holds, as follows
\begin{eqnarray*}
&&H^{m,N}_{P,q} \left[ x \,\Bigg|\genfrac{}{}{0pt}{}{(c_i^A+\eta^A_1 \gamma_i^A,\sigma^A_1 \gamma_i^A)_{1,P}}{(b_j,\beta_j)_{1,q}} \right]\\
&&=\int_0^\infty t^{\eta^A_1 -1} H^{m,0}_{0,q} \left[ x t^{-\sigma^A_1} \,\Bigg|\genfrac{}{}{0pt}{}{{-\!-}\!{-\!-}}{(b_j,\beta_j)_{1,q}} \right]H^{0,N}_{P,0} \left[ t \,\Bigg|\genfrac{}{}{0pt}{}{(c_i^A, \gamma_i^A)_{1,P}}{{-\!-}\!{-\!-}\!{-\!-}} \right]\mathrm{d}t,
\end{eqnarray*}
provided that $a^*_{H^{m,N}_{P,q}}=a^*_{H^{m,0}_{0,q}} + \sigma^A_1 a^*_{H^{0,N}_{P,0}}$ with $\sigma^A_1,a^*_{H^{m,0}_{0,q}}, a^*_{H^{0,N}_{P,0}} >0$.
\item If $a^*_{H^{m,0}_{0,q}}>0$ and $q=lm+h$ with $l \in \N_0$ and $h\in \{1,\dots,m-1\}$ with some $(b_j,\beta_j)_{1,h}=(d_j^A+\eta^A_2\delta_j^A,\sigma^A_2\delta_j^A)_{1,h}$ satisfying assumption~(2.8.3) in \cite{SaiKil}, we apply again Corollary~2.9.1 in \cite{SaiKil} to obtain 
\begin{eqnarray*}
&&H^{m,0}_{0,q} \left[ y \,\Bigg|\genfrac{}{}{0pt}{}{{-\!-}\!{-\!-}\!{-\!-}\!{-\!-}}{(d_j^A+\eta^A_2\delta_j^A,\sigma^A_2\delta_j^A)_{1,h},(b_j,\beta_j)_{h+1,q}} \right]\\
&&=\int_0^\infty t^{\eta^A_2 -1} H^{m',0}_{0,q'} \left[ y t^{-\sigma^A_2} \,\Bigg|\genfrac{}{}{0pt}{}{{-\!-}\!{-\!-}}{(b_{h+j},\beta_{h+j})_{1,q'}} \right]H^{h,0}_{0,h} \left[ t \,\Bigg|\genfrac{}{}{0pt}{}{{-\!-}\!{-\!-}\!{-\!-}}{(d_j^A,\delta_j^A)_{1,h}} \right]\mathrm{d}t
\end{eqnarray*}
 such that $a^*_{H^{m,0}_{0,q}}=a^*_{H^{m',0}_{0,q'}} + \sigma^A_2 a^*_{H^{h,0}_{0,h}}$ with $\sigma^A_2,a^*_{H^{m',0}_{0,q'}}, a^*_{H^{h,0}_{0,h}} >0$, where $m':=m-h$ and $q':=q-h$.\\
\item If $a^*_{H^{0,N}_{P,0}}>0$ and $P=kN+g$ with $k\in\N_0$ and $g\in\{1,\dots,N-1\}$ with some $(c_i^A,\gamma_i^A)_{1,g}=(v^A_i+\eta^A_3 \nu^A_i,\sigma^A_3 \nu^A_i)_{1,g}$ satisfying assumption~(2.8.3) in \cite{SaiKil}, we apply again Corollary 2.9.1 in \cite{SaiKil} to obtain 
\begin{eqnarray*}
&&H^{0,N}_{P,0} \left[ x \,\Bigg|\genfrac{}{}{0pt}{}{(v^A_i+\eta^A_3 \nu^A_i,\sigma^A_3 \nu^A_i)_{1,g},(c_i^A, \gamma_i^A)_{g+1,P}}{{-\!-}\!{-\!-}\!{-\!-}\!{-\!-}} \right]\\
&&=\int_0^\infty t^{\eta^A_3 -1} H^{0,N'}_{P',0} \left[ x t^{-\sigma^A_3} \,\Bigg|\genfrac{}{}{0pt}{}{(c_{g+i}^A,\gamma_{g+i}^A)_{1,P'}}{{-\!-}\!{-\!-}} \right]H^{0,g}_{g,0} \left[ t \,\Bigg|\genfrac{}{}{0pt}{}{(v^A_i, \nu^A_i)_{1,g}}{{-\!-}\!{-\!-}} \right]\mathrm{d}t,
\end{eqnarray*}
such that $a^*_{H^{0,N}_{P,0}}=a^*_{H^{0,N'}_{P',0}}+\sigma^A_3 a^*_{H^{0,g}_{g,0}}$ with $\sigma^A_3,a^*_{H^{0,N'}_{P',0}}, a^*_{H^{0,g}_{g,0}}>0$, where $N':=N-g$ and $P':=P-g$.
\end{enumerate}

\item[$\mathbf{B}$] 
\begin{enumerate}
    \item We decompose a given $H^{m,n}_{p,q}$ with $n=M$ and $p=Q$ by using Theorem~2.9 in \cite{SaiKil}, identifying $(a_i,\alpha_i)_{1,p}=(1-d_i^B-\eta^B_1 \delta^B_i, \sigma^B_1 \delta^B_i)_{1,Q}$, such that assumption~(2.8.3) in \cite{SaiKil} holds, as follow
\begin{eqnarray*}
&&H^{m,M}_{Q,q} \left[ x \,\Bigg|\genfrac{}{}{0pt}{}{(1-d_i^B-\eta^B_1 \delta^B_i, \sigma^B_1 \delta^B_i)_{1,Q}}{(b_j,\beta_j)_{1,q}} \right]=\\
&&\int_0^\infty t^{\eta^B_1 -1} H^{m,0}_{0,q} \left[ x t^{\sigma^B_1} \,\Bigg|\genfrac{}{}{0pt}{}{{-\!-}\!{-\!-}}{(b_j,\beta_j)_{1,q}} \right]H^{M,0}_{0,Q} \left[ t \,\Bigg|\genfrac{}{}{0pt}{}{{-\!-}\!{-\!-}\!{-\!-}}{(d_i^B, \delta_i^B)_{1,Q}} \right]\mathrm{d}t,
\end{eqnarray*}
provided that $a^*_{H^{m,M}_{Q,q}}=a^*_{H^{m,0}_{0,q}} + \sigma^B_1 a^*_{H^{M,0}_{0,Q}}$ with $\sigma^B_1,a^*_{H^{m,0}_{0,q}}, a^*_{H^{M,0}_{0,Q}} >0$.\\
\item The functions $H^{m,0}_{0,q} \left[ x \,\big|\genfrac{}{}{0pt}{}{{-\!-}\!{-\!-}}{(b_j,\beta_j)_{1,q}} \right]$ and $H^{M,0}_{0,Q} \left[ x \,\big|\genfrac{}{}{0pt}{}{{-\!-}\!{-\!-}\!{-\!-}}{(d_i^B, \delta_i^B)_{1,Q}} \right]$ can be decomposed as in step $\mathbf{A}\text{-}(2)$.
\end{enumerate}
\end{description}
\begin{remark}
    \begin{enumerate}
        \item  In the case $a^*_{H^{m,0}_{0,q}}=0$, $\mu_{H^{m,0}_{0,q}}<-1$ or $ a^*_{H^{0,N}_{P,0}}=0$, $\mu_{H^{0,N}_{P,0}}<-1$, Corollary~2.10.1 in \cite{SaiKil} leads to the same integral decomposition of Corollary~2.9.1, where the parameter $\mu$ of a Fox-$H$ function is given in Equation~(1.1.10) in \cite{SaiKil}. Hence, the steps in $\mathbf{A}$ can also be stated for these cases.
        \item In the case $a^*_{H^{m,0}_{0,q}}=0$, $\mu_{H^{m,0}_{0,q}}<-1$ or $ a^*_{H^{M,0}_{0,Q}}=0$, $\mu_{H^{M,0}_{0,Q}}<-1$, step $\mathbf{B}\text{-}(1)$ can be performed by applying Theorem~2.10 in \cite{SaiKil}.
    \end{enumerate}
\end{remark}

\begin{definition}
    Let $H^{m,n}_{p,q}(x)$, $x>0$, be a given Fox-$H$ function. If $H^{m,n}_{p,q}(\cdot)$ can be decomposed by procedure $\mathbf{A}$, then we call $H^{m',0}_{0,q'}(\cdot)$, $H^{0,N'}_{P',0}(\cdot)$ its atomic functions and $H^{h,0}_{0,h}(\cdot)$, $H^{0,g}_{g,0}(\cdot)$ its auxiliary functions.
\end{definition}

\begin{theorem}\label{thm:FoxHPosi}
     Let $H^{m,n}_{p,q}(x)$, $x>0$, be a given Fox-$H$ function. If $H^{m,n}_{p,q}(\cdot)$ can be decomposed by procedure $\mathbf{A}$, its atomic and auxiliary functions are non-negative, then $H^{m,n}_{p,q}(\cdot)$ is non-negative on $(0,\infty)$.
\end{theorem}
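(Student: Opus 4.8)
The plan is to obtain the non-negativity of $H^{m,n}_{p,q}(\cdot)$ directly from the three integral representations supplied by procedure $\mathbf{A}$, reading them from the innermost one outwards. The key observation is that procedure $\mathbf{A}$ rewrites $H^{m,n}_{p,q}$ as an iterated Mellin-type convolution in which, under the hypotheses of the theorem, every Fox-$H$ factor is non-negative on $(0,\infty)$ and every power weight $t^{\eta^A_k-1}$ is strictly positive there; hence the integrand at each stage is non-negative and so is the integral it defines.

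First I would treat step $\mathbf{A}\text{-}(2)$. Either $H^{m,0}_{0,q}(\cdot)$ is itself atomic (the degenerate branch, $h=0$), in which case it is non-negative on $(0,\infty)$ by hypothesis; or else the step gives, for $y>0$,
\[
H^{m,0}_{0,q}\!\left[\,y\,\right]=\int_0^\infty t^{\eta^A_2-1}\,H^{m',0}_{0,q'}\!\left[\,y\,t^{-\sigma^A_2}\,\right]\,H^{h,0}_{0,h}\!\left[\,t\,\right]\,\mathrm{d}t .
\]
Since $\sigma^A_2>0$, the argument $y\,t^{-\sigma^A_2}$ is positive for every $t>0$, so the atomic function $H^{m',0}_{0,q'}$ and the auxiliary function $H^{h,0}_{0,h}$ are evaluated only at positive points, where they are non-negative by assumption; together with $t^{\eta^A_2-1}>0$ this makes the integrand non-negative, and hence $H^{m,0}_{0,q}(y)\ge 0$ for all $y>0$. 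The same argument applied to step $\mathbf{A}\text{-}(3)$---using $\sigma^A_3>0$, the non-negativity of the atomic function $H^{0,N'}_{P',0}$ and of the auxiliary function $H^{0,g}_{g,0}$, and handling the degenerate branch $g=0$ separately---yields $H^{0,N}_{P,0}(x)\ge 0$ for all $x>0$.

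Finally I would feed these two facts into step $\mathbf{A}\text{-}(1)$, which reads, for $x>0$,
\[
H^{m,n}_{p,q}\!\left[\,x\,\right]=\int_0^\infty t^{\eta^A_1-1}\,H^{m,0}_{0,q}\!\left[\,x\,t^{-\sigma^A_1}\,\right]\,H^{0,N}_{P,0}\!\left[\,t\,\right]\,\mathrm{d}t .
\]
For $x>0$ and $t>0$ one has $x\,t^{-\sigma^A_1}>0$ (again $\sigma^A_1>0$), so $H^{m,0}_{0,q}$ is evaluated where it was just shown to be non-negative, while $H^{0,N}_{P,0}(t)\ge 0$ and $t^{\eta^A_1-1}>0$; the integrand is therefore non-negative and $H^{m,n}_{p,q}(x)\ge 0$, which is the assertion.

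The argument is thus essentially bookkeeping once the representations are in hand; in particular no Fubini/Tonelli interchange is required, because we never combine the three integrals into a single one---at each level we only invoke that an integral with a non-negative integrand is non-negative. The two points that do deserve care are: (i) that at every invocation of Corollary~2.9.1 in \cite{SaiKil} the needed hypotheses hold, notably assumption~(2.8.3) there and the additivity $a^*=a^*_{\mathrm{atomic}}+\sigma\,a^*_{\mathrm{auxiliary}}$ with all three summands positive---but this is precisely what ``$H^{m,n}_{p,q}$ can be decomposed by procedure $\mathbf{A}$'' encodes; and (ii) the edge-case analysis of when the decomposition stops early (the branches $h=0$ or $g=0$, in which the relevant $H^{m,0}_{0,q}$ or $H^{0,N}_{P,0}$ is already atomic and no auxiliary factor appears). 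This bookkeeping of the degenerate branches is the only mild obstacle.
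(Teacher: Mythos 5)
Your proposal is correct and follows the same route as the paper: the paper's proof simply observes that procedure $\mathbf{A}$ expresses $H^{m,n}_{p,q}$ as nested integrals whose integrands are the atomic and auxiliary functions (times positive power weights), so non-negativity propagates outward exactly as you argue. Your version just spells out the bookkeeping, including the positive weights $t^{\eta^A_k-1}$, the positivity of the arguments $x\,t^{-\sigma^A_k}$, and the degenerate branches, which the paper leaves implicit.
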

\begin{proof}
    The proof is straightforward by noting that the procedure $\mathbf{A}$ decomposes $H^{m,n}_{p,q}$ into nested integrals, where the integrand functions are its atomic or auxiliary functions.
\end{proof}
\begin{remark}
    \begin{enumerate}
        \item The minimal assumptions on the parameters $a^*$ and $\mu$ guarantee the existence of the atomic or auxiliary functions, see Theorem~\ref{thm:ResidueDeltaPosi} and Theorem~1.1 in \cite{SaiKil};
        \item The positivity of the auxiliary functions $H^{h,0}_{0,h}(x)$, $x>0$, could be studied by applying Corollary 2.10.1 in \cite{SaiKil} to Equation~(2.9.4) in \cite{SaiKil} repeatedly;
        \item The positivity of the auxiliary functions $H^{0,g}_{g,0}(x)$, $x>0$, could be obtained by applying Equation~(2.1.3) and  Equation~(2.9.4) in \cite{SaiKil} to $H^{0,g}_{g,0}(x)$, $x>0$. 
        \item The sign of the atomic functions could be studied by their series representation, the method used in \cite{K2021} or using the function {\fontfamily{cmr}\selectfont FoxH[$\cdot$]} in Wolfram Alpha, see \cite{WolfAlph}. 
    \end{enumerate}
\end{remark}
\begin{remark}
Motivated by the work of Carter and Springer \cite{CarSprin}, we aim to develop a general framework that covers a wide class of (in general) non-Gaussian processes. This class is characterized by subordinations of Gaussian processes whose Laplace transform is represented by Fox-$H$ functions. In this way, the Fox-$H$ functions we are interested in correspond to the density of certain processes, see Lemma~\ref{lem:HFiniteDensities} below. Thus, assumption $(4)$ in Lemma~\ref{lem:AllHdensity} is automatically satisfied.

\end{remark}

\begin{remark}
		Other properties, such as infinite divisibility, atoms or asymptotic behavior, of a subclass of Fox-$H$ densities is questioned and investigated in \cite{Jan10,KSW20,Duf10,CL99,KarpPril16}.
\end{remark}

The following lemma gives the class of FHdam whose LT may be expressed by generalized Wright functions.

		\begin{lemma}[Fox-$H$  densities with all moments finite]\label{lem:HFiniteDensities}
			Let the assumptions of Lemma~\ref{lem:AllHdensity} hold for $n=0$ and $q=m$. Then the corresponding Fox-$H$ density	
			\begin{equation}\label{eq:densitiesFiniteMoments}  \varrho(x)=\frac{1}{K}H^{m,0}_{p,m} \left[ x \,\Bigg|\genfrac{}{}{0pt}{}{(a_i,\alpha_i)_{1,p}}{(b_j,\beta_j)_{1,m}} \right], \quad x>0,\end{equation}
			where \[ K=\mathcal{H}^{m,0}_{p,m}(1)=
			\frac{\prod_{j=1}^m \Gamma(b_j+\beta_j)}{\prod_{i=1}^p \Gamma(a_i+\alpha_i)}\] 
   has finite moments of all orders. The moments are given by 
			\[
			\int_{0}^\infty x^l  \varrho(x)\, \mathrm{d}x
			=\frac{1}{K}(\mathcal{M}H^{m,0}_{p,m})(l+1), \quad l=0,1,\dots,
			\]
   where $\mathcal{M}$ is the Mellin transform, i.e.~$(\mathcal{M}f)(s):=\int_{0}^{\infty}x^{s-1}f(x)\mathrm{d}x$; see Equation~(2.5.1) in \cite{SaiKil}. 

   Furthermore, if $ \sum_{i=1}^p \alpha_i - \sum_{j=1}^m \beta_j >-1$, then its LT is given by
			\begin{equation}\label{eq:LTdensitiesFiniteMoments} \phi(s):=\frac{1}{K}H^{1,m}_{m,p+1}\left[ s \,\bigg| \genfrac{}{}{0pt}{}{(1-b_j-\beta_j,\beta_j)_{1,m}}{(0,1),(1-a_i-\alpha_i,\alpha_i)_{1,p}}\right]=\frac{1}{K}\, _m\Psi_p\left[ \genfrac{}{}{0pt}{}{(b_j+\beta_j,\beta_j)_{1,m}}{(a_i+\alpha_i,\alpha_i)_{1,p}} \middle| -s\right],   \end{equation}
			where $s\geq 0$, has an entire extension.\\
        We denote by $\mathcal{X}$ the class of random variables with Fox-$H$ density $\varrho(\cdot)$ given in Equation~\eqref{eq:densitiesFiniteMoments}.
            
		\end{lemma}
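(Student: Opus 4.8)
The plan is to build the proof of Lemma~\ref{lem:HFiniteDensities} by specializing Lemma~\ref{lem:AllHdensity} to the case $n=0$, $q=m$ and then handling the moments and the Laplace transform separately.

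First I would observe that, with $n=0$ and $q=m$, Assumption~1 of Lemma~\ref{lem:AllHdensity} contains no condition of the form $a_i+\alpha_i<1$ (the index set $i=1,\dots,n$ is empty) and Assumption~2 has no condition $b_j+\beta_j<1$ (the index set $j=m+1,\dots,q$ is empty), so the surviving hypotheses are exactly $a_i+\alpha_i>0$ for all $i=1,\dots,p$, $b_j+\beta_j>0$ for all $j=1,\dots,m$, the condition on $a^*$ (or $a^*=0$, $\mu<-1$), and non-negativity on $(0,\infty)$. Under these, Lemma~\ref{lem:AllHdensity} immediately gives that $\varrho$ in \eqref{eq:densitiesFiniteMoments} is a Fox-$H$ density with $K=\mathcal{H}^{m,0}_{p,m}(1)=\prod_{j=1}^m\Gamma(b_j+\beta_j)/\prod_{i=1}^p\Gamma(a_i+\alpha_i)$, which is exactly the stated normalizing constant (note that with $n=0$ and $q=m$ the products $\prod_{i=1}^n\Gamma(1-a_i-\alpha_i)$ and $\prod_{i=m+1}^q\Gamma(1-b_i-\beta_i)$ in \eqref{constantspaceweight} are empty and equal to $1$).

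Next I would prove finiteness of all moments via the Mellin transform. The point is that for $H^{m,0}_{p,m}$, the Mellin transform $(\mathcal{M}H^{m,0}_{p,m})(s)=\mathcal{H}^{m,0}_{p,m}(s)=\prod_{j=1}^m\Gamma(b_j+\beta_j s)/\prod_{i=1}^p\Gamma(a_i+\alpha_i s)$ is, by the standard Mellin-transform formula for Fox-$H$ functions (Equation~(2.5.1) in \cite{SaiKil}), valid on a vertical strip whose left boundary is determined by the poles $b_{jl}$ in \eqref{def:polesb}; since $n=0$ there are no poles $a_{ik}$ to the right, and the strip of validity extends to $+\infty$ to the right. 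For $s=l+1$ with $l\in\N_0$ the ratio of Gammas is finite because $b_j+\beta_j(l+1)\geq b_j+\beta_j>0$ (using $\beta_j>0$ and $l\geq 0$) so no pole of the numerator is hit, and the denominator is a finite product of finite Gamma values. Hence $\int_0^\infty x^l\varrho(x)\,\mathrm{d}x = \frac1K(\mathcal{M}H^{m,0}_{p,m})(l+1)<\infty$ for every $l$, and the stated moment formula follows since $x^l\varrho(x)\geq 0$ lets us identify the Mellin transform at $s=l+1$ with the $l$-th moment.

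Finally, for the Laplace transform, I would invoke the LT formula already established in Lemma~\ref{lem:AllHdensity}, which in the present case ($n=0$, $q=m$) reads $\phi(s)=\frac1K H^{1,m}_{m,p+1}\!\left[s\,\middle|\genfrac{}{}{0pt}{}{(1-b_j-\beta_j,\beta_j)_{1,m}}{(0,1),(1-a_i-\alpha_i,\alpha_i)_{1,p}}\right]$; then I would rewrite this Fox-$H$ as a generalized Wright function using the identity $_m\Psi_p\!\left[\genfrac{}{}{0pt}{}{(A_i,\alpha_i)_{1,m}}{(B_j,\beta_j)_{1,p}}\middle|-z\right]=H^{1,m}_{m,p+1}\!\left[z\,\middle|\genfrac{}{}{0pt}{}{(1-A_i,\alpha_i)_{1,m}}{(0,1),(1-B_j,\beta_j)_{1,p}}\right]$ from Equation~(5.2) in \cite{KST02} (recalled in the excerpt), with the substitution $A_i=b_i+\beta_i$, $B_i=a_i+\alpha_i$, giving $\phi(s)=\frac1K\,{}_m\Psi_p\!\left[\genfrac{}{}{0pt}{}{(b_j+\beta_j,\beta_j)_{1,m}}{(a_i+\alpha_i,\alpha_i)_{1,p}}\middle|-s\right]$. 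The entire extension then follows from the convergence of the defining series of $_m\Psi_p$: the general term has modulus $\prod_j\Gamma(b_j+\beta_j+\beta_j k)/\prod_i\Gamma(a_i+\alpha_i+\alpha_i k)\cdot|s|^k/k!$, and a Stirling estimate shows this behaves like $C\, k^{c k}\rho^k|s|^k/k!$ with exponent $c=\sum_{j=1}^m\beta_j-\sum_{i=1}^p\alpha_i$; the ratio test gives convergence for all $s\in\C$ precisely when $c<1$, i.e.\ $\sum_{i=1}^p\alpha_i-\sum_{j=1}^m\beta_j>-1$, which is the hypothesis. I expect the main obstacle to be the careful bookkeeping in this last asymptotic comparison — correctly tracking which Gamma-products dominate in the Stirling expansion so that the critical quantity $\sum\alpha_i-\sum\beta_j$ emerges with the right sign and the threshold $>-1$ is matched exactly — while the density and moment parts are essentially direct corollaries of Lemma~\ref{lem:AllHdensity} and the Mellin-transform formula.
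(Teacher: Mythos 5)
Your treatment of the density, the normalizing constant $K=\mathcal{H}^{m,0}_{p,m}(1)$, and the moments is fine and matches the paper: the first two are immediate specializations of Lemma~\ref{lem:AllHdensity} to $n=0$, $q=m$, and the moment formula is exactly the Mellin-transform statement (the paper cites Theorem~2.2 in \cite{SaiKil}; your observation that with $n=0$ there are no right-hand poles, so $\mathcal{H}^{m,0}_{p,m}(l+1)$ is finite for every $l\in\N_0$, is the right reason). Your final identification of the Fox-$H$ form of the LT with $_m\Psi_p$ via Equation~(5.2) in \cite{KST02} is also the step the paper uses.

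The genuine gap is in the ``entire extension'' part. Your Stirling/ratio-test argument only shows that the series defining $_m\Psi_p$ has infinite radius of convergence when $\sum_i\alpha_i-\sum_j\beta_j>-1$; it does not show that this entire function actually agrees with, and hence extends, the Laplace transform $\phi$. The point is that $\phi$ is produced by Theorem~2.3 in \cite{SaiKil} as a Fox-$H$ function defined through the vertical contour $\mathcal{L}_{\mathrm{i}\gamma\infty}$, which is a priori analytic only on the sector $|\arg z|<a^*\pi/2$, whereas the Wright series is the sum of residues at the left poles, i.e.\ the Fox-$H$ function defined through $\mathcal{L}_{-\infty}$. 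Invoking Equation~(5.2) of \cite{KST02} as if it applied directly to the LT silently identifies these two contour definitions, and that identification is precisely the nontrivial content the paper isolates: its proof notes that $\Delta_\phi>0$ under the stated hypothesis, applies Theorem~1.3 in \cite{SaiKil} to get analyticity of the $\mathcal{L}_{-\infty}$ version on $\C\backslash\{0\}$, and then uses the paper's new Theorem~\ref{thm:ResidueDeltaPosi} (proved via the semicircle asymptotics of Lemma~\ref{lem:asympHCircle} and the residue theorem) to show that this function is the analytic continuation of the $\mathcal{L}_{\mathrm{i}\gamma\infty}$-defined LT; only then does (5.2) of \cite{KST02} deliver the Wright-function form. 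Your argument could be repaired without Theorem~\ref{thm:ResidueDeltaPosi} by expanding $\mathrm{e}^{-sx}$ in the Laplace integral and integrating term by term against $\varrho$, using the moment formula $\int_0^\infty x^k\varrho(x)\,\mathrm{d}x=\mathcal{H}^{m,0}_{p,m}(k+1)/K$ together with a justification of the interchange (e.g.\ the exponential decay of $H^{m,0}_{p,m}$ at infinity from Corollary~\ref{prop:AsymptotHDensity}), but as written the link between the entire series and $\phi$ is missing.
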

		\begin{proof}
			For $n=0$ and $q=m$, the $\Delta$ and $a^*$ corresponding to $\varrho$ are such that $\Delta=a^*$. The Mellin transform of $\varrho$ is given by Theorem~2.2 in \cite{SaiKil}:
			
			\begin{equation*} 
				\int_{0}^\infty x^l  \varrho(x)\, \mathrm{d}x=\frac{1}{K}(\mathcal{M}H^{m,0}_{p,m})(l+1),
			\end{equation*}
			for all $l\in \mathbb{N}_0$.\\
  
    If $ \sum_{i=1}^p \alpha_i - \sum_{j=1}^m \beta_j >-1$, then the parameter $\Delta$ corresponding to $\phi(\cdot)$ is positive. Therefore, by applying Theorem~1.3 in \cite{SaiKil}, $\phi(\cdot)$ defined on $\mathcal{L}=\mathcal{L}_{-\infty}$ is analytic on $\C\backslash\{0\}$. As $\phi(\cdot)$ is a LT defined by choosing $\mathcal{L}=\mathcal{L}_{\mathrm{i} \gamma\infty}$, we use Theorem~\ref{thm:ResidueDeltaPosi} to extend analytically $\phi(\cdot)$ to $\C\backslash\{0\}$.
   Equation~\eqref{eq:LTdensitiesFiniteMoments} follows from Equation~(5.2) in \cite{KST02}.  \end{proof}

	\begin{remark}
	    In Theorem~1.5 in \cite{KilSvri} the authors give the condition on $\Delta$ for the absolute convergence of the generalized Wright functions defined through series.
	\end{remark}

  \begin{remark}
            Let $a^*_\varrho$ (resp.~$a^*_{\phi}$) be the parameter defined in Equation~(1.1.7) in \cite{SaiKil} associated with the Fox-$H$ density in Equation~\eqref{eq:densitiesFiniteMoments} (resp.~\eqref{eq:LTdensitiesFiniteMoments}). In addition, let $\Delta_\varrho$ (resp.~$\Delta_{\phi}$) be the parameter defined in Equation~(1.1.8) in \cite{SaiKil}  associated to the Fox-$H$ density in Equation~\eqref{eq:densitiesFiniteMoments} (resp.~\eqref{eq:LTdensitiesFiniteMoments}). Then we have the following relations: 
            $a^*_{\phi}=1+a^*_\varrho $ and $\Delta_{\phi}=1-\Delta_\varrho,$ 
            where $a^*_\varrho=\sum_{j=1}^m \beta_j - \sum_{i=1}^p \alpha_i=\Delta_\varrho$. For the other parameters $\mu_\varrho$, $\mu_\phi$, $\delta_\varrho$ and $\delta_\phi$ the following relationships hold: $\mu_\phi=\mu_\varrho + a^*_\varrho + m-p-1/2$
            and 
            $\delta_\phi \delta_\varrho=1$.
  \end{remark}

Now, we would like to emphasize an algebraic property of the class $\mathcal{X}$ of random variables with FHdam density: it is a multiplicative monoid with identity element given by the delta distribution at $1$, i.e.~$\delta_1(\cdot)$. This result is new to our knowledge.

\begin{lemma} \label{lem:ClosureLemma}
Let $(\Omega, \mathcal{F}, \mathbb{P})$ be a probability space, $X,Y \in \mathcal{X}$ be independent random variables, and $r_1, r_2 \in (0,\infty)$ be real positive numbers. Then the random variable $X^{r_1}Y^{r_2} \in \mathcal{X}$. Moreover, the subset $\mathcal{X}_{\mathrm{indep.}}\subset \mathcal{X}$ formed by independent random variables is a multiplicative monoid. The identity element of $(\mathcal{X}_{\mathrm{indep.}}, \cdot)$ is the random variable with distribution $H^{0,0}_{0,0}(\cdot)=\delta_1(\cdot)$.
\end{lemma}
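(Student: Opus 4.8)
The plan is to verify the three assertions — membership of $X^{r_1}Y^{r_2}$ in $\mathcal{X}$, the monoid axioms, and the identification of the neutral element — by working throughout with the Mellin transform, which for a nonnegative random variable $Z$ on $(0,\infty)$ with density $f_Z$ is the moment function $(\mathcal{M}f_Z)(s)=\mathbb{E}[Z^{s-1}]$. The key structural fact is that a Fox-$H$ density of the form in Equation~\eqref{eq:densitiesFiniteMoments} is characterised by having Mellin transform equal to $\tfrac1K\,\mathcal{H}^{m,0}_{p,m}(s)$, i.e. a ratio $\prod_{j=1}^m\Gamma(b_j+\beta_j s)\big/\prod_{i=1}^p\Gamma(a_i+\alpha_i s)$ up to the normalising constant $K=\mathcal{H}^{m,0}_{p,m}(1)$; conversely, any density on $(0,\infty)$ whose Mellin transform is such a gamma-ratio (with parameters satisfying the hypotheses of Lemma~\ref{lem:HFiniteDensities}) is, by Mellin inversion, exactly the corresponding Fox-$H$ density. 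So the strategy reduces to showing that the Mellin transform of $X^{r_1}Y^{r_2}$ is again a gamma-ratio of the admissible shape.

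First I would record the two elementary transformation rules. If $X\in\mathcal{X}$ has Mellin transform $\mathcal M_X(s)=\tfrac1{K_X}\prod_{j}\Gamma(b_j+\beta_j s)\big/\prod_i\Gamma(a_i+\alpha_i s)$, then for $r>0$ the power $X^{r}$ has $\mathcal M_{X^r}(s)=\mathbb{E}[X^{r(s-1)}]=\mathcal M_X\!\big(r(s-1)+1\big)$, which is again a gamma-ratio with the same number of factors but rescaled parameters: $\beta_j\mapsto r\beta_j$, $b_j\mapsto b_j+\beta_j(1-r)$, and similarly for the $a_i,\alpha_i$. One checks directly that the admissibility constraints of Lemma~\ref{lem:AllHdensity}(1)--(3) (namely $\alpha_i>0$, $a_i+\alpha_i>0$, $\beta_j>0$, $b_j+\beta_j>0$, and the sign conditions on $a^*=\sum_j\beta_j-\sum_i\alpha_i$ together with $\Delta=a^*$) are preserved under this rescaling, because multiplying all $\beta_j$ by $r>0$ multiplies $a^*$ by $r$ as well, and $b_j+\beta_j(1-r)+r\beta_j=b_j+\beta_j>0$; the "all moments finite" and the LT-existence condition $\sum_i\alpha_i-\sum_j\beta_j>-1$ need a short check and may require one to note that the power map is still in the class $\mathcal{X}$ as stated (this is where I expect to lean on the precise wording "$r_1,r_2\in(0,\infty)$"). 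Next, for independent $X,Y$ with densities in $\mathcal{X}$, the product $XY$ has $\mathcal M_{XY}(s)=\mathcal M_X(s)\mathcal M_Y(s)$, which is the product of two gamma-ratios, hence again a gamma-ratio: the numerator gamma-factors of $X$ and $Y$ concatenate, likewise the denominators, so $m\mapsto m_X+m_Y$ and $p\mapsto p_X+p_Y$, and the normalising constant multiplies. Combining the two rules, $X^{r_1}Y^{r_2}$ has Mellin transform a single admissible gamma-ratio, and by Mellin inversion together with Lemma~\ref{lem:HFiniteDensities} this is precisely a Fox-$H$ density of type~\eqref{eq:densitiesFiniteMoments}; hence $X^{r_1}Y^{r_2}\in\mathcal{X}$.

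The monoid structure then follows formally: commutativity and associativity of the product on $\mathcal{X}_{\mathrm{indep.}}$ are inherited from commutativity/associativity of ordinary multiplication of independent random variables (equivalently, of multiplication of their Mellin transforms). For the identity, the degenerate random variable $Z\equiv 1$ has Mellin transform $\mathcal M_Z(s)=\mathbb{E}[1^{s-1}]=1$, the empty gamma-ratio with $m=p=0$ and $K=\mathcal H^{0,0}_{0,0}(1)=1$ (empty products equal $1$ by the convention recorded after the definition of the $G$-function); its law is the point mass $\delta_1$, which is exactly $H^{0,0}_{0,0}(\cdot)$ as a "density" in the distributional sense, and multiplying any $X$ by an independent copy of $Z\equiv1$ returns $X$ in law. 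The main obstacle — and the only place where genuine care is needed rather than bookkeeping — is the justification that the hypotheses of Lemma~\ref{lem:HFiniteDensities} (the full list (1)--(3) of Lemma~\ref{lem:AllHdensity} plus $\sum_i\alpha_i-\sum_j\beta_j>-1$, and the simple-poles condition) are stable under both operations; the parameter rescaling from the power map is the delicate one, since it shifts the $b_j$ while scaling the $\beta_j$, and one must confirm that the combination $b_j+\beta_j$ that governs all the relevant inequalities is left invariant, that $a^*=\Delta$ keeps its sign, and that simplicity of the poles $a_{ik}$ is not destroyed. Once that verification is in place, everything else is immediate from the multiplicativity of the Mellin transform.
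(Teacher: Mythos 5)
Your argument is correct and follows essentially the same route as the paper: the paper proves closure under powers and products by citing Theorems~\ref{thmpowertH0inf} and \ref{thmproductH} (the Carter--Springer power and product rules, which are exactly the Mellin-transform identities you re-derive, with the same rescaling $(b_j,\beta_j)\mapsto(b_j+\beta_j(1-r),r\beta_j)$ and parameter concatenation), and it identifies the identity element by computing $H^{0,0}_{0,0}(x)=\delta_1(x)$ from the Mellin--Barnes integral, which matches your empty-gamma-ratio/Mellin-inversion identification. Your explicit check that $b_j+\beta_j$ and $a_i+\alpha_i$ are invariant and that $a^*=\Delta$ keeps its sign makes precise what the paper leaves implicit in its appeal to those appendix theorems.
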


\begin{proof}
Under the conditions of the lemma, using Theorem~\ref{thmpowertH0inf}, we have $X^{r_1},Y^{r_2} \in \mathcal{X}$. Moreover, if $X',Y' \in \mathcal{X}$, we also have $X' Y' \in \mathcal{X}$ by applying Theorem~\ref{thmproductH}. For the random variable with support $(0,1)$, Remark~\ref{rem:thmProductPowerHdensityon01}-$i)$ leads to the result.\\
Now we show that $H^{0,0}_{0,0}(\cdot)$ is the identity element. Using Definition~\ref{def:H-function}, with $\mathcal{L}$ being the imaginary axes, we obtain the following:
			
			\[
			H^{0,0}_{0,0}(x)=\frac{1}{2 \pi \mathrm{i}} \int^{\mathrm{i} \infty}_{-\mathrm{i} \infty} x^{-s}\,\mathrm{d}s=\frac{1}{2 \pi} \int_{\mathbb{R}}x^{-\mathrm{i}t}\,\mathrm{d}t, \quad x>0,
			\]
		where, in the second equality, we have used the change of variable $s=\mathrm{i}t$, for $t \in \mathbb{R}$. 
			Now it is easy to see that $H^{0,0}_{0,0}(\cdot)$ can be written as
			\[  H^{0,0}_{0,0}(x)=\frac{1}{2 \pi }\int_{\mathbb{R}} \mathrm{e}^{-\mathrm{i}\ln(x)t}\mathrm{d}t =\delta_0(-\ln(x)),\]
    where $\delta_{0}$ is the delta distribution at $0$.
			Thus, we have  
			\[ H^{0,0}_{0,0}(x)=\delta_0(-\ln(x))\equiv \delta_1(x), \quad x>0. \]
   We conclude by noting that $\delta_1(\cdot)$ is the distribution of a degenerate random variable almost surely equals one.
\end{proof}
\begin{remark}
	In \cite{SB04}, the authors show a result similar to $H^{0,0}_{0,0}(x)=\delta_1(x)$ but with a different proof.
\end{remark}

        \begin{corollary}\label{prop:AsymptotHDensity}
    Let the assumptions of Lemma~\ref{lem:HFiniteDensities} hold, with $a^*>0$ and the poles $b_{jl}$ in Equation \eqref{def:polesb} being simple, then we have that
    \begin{equation}\label{eq:AsymptoticHDensityinfinity} H^{m,0}_{p,m}(x)\sim O\left(x^{(\Re(\mu)+1/2)/\Delta} \exp(-\Delta \delta^{-1/\Delta} x^{1/\Delta})\right), \quad x \to +\infty    \end{equation}
    and
    \begin{equation}\label{eq:AsymptoticHDensityzero} H^{m,0}_{p,m}(x)\sim O\left(x^{\rho}\right), \quad x \to 0^+,   \end{equation}
    where 
    \[  \rho:=\min_{j=1,\dots,m}\left[ \frac{b_{j}}{\beta_{j}} \right]>-1. \]
        \end{corollary}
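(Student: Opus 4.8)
The plan is to read off the asymptotics directly from the known general asymptotic expansions of Fox-$H$ functions, specialized to the case $H^{m,0}_{p,m}$, and then to verify that the hypotheses of Lemma~\ref{lem:HFiniteDensities} together with the extra conditions ($a^*>0$ and simple poles $b_{jl}$) put us exactly in the regime where those expansions apply. For the behaviour at $+\infty$, I would invoke the asymptotic result for $H^{m,n}_{p,q}(z)$ as $|z|\to\infty$ in the case $\Delta=a^*>0$ (see Sec.~1.7--1.8 in \cite{SaiKil}, in particular the expansion governed by the algebraic-exponential factor $z^{(\mu+1/2)/\Delta}\exp(-\Delta\,\delta^{-1/\Delta} z^{1/\Delta})$): since here $n=0$ and $q=m$ we have $\Delta=a_1^*=a^*=\sum_{j=1}^m\beta_j-\sum_{i=1}^p\alpha_i>0$, so the hypothesis $a^*>0$ guarantees $\Delta>0$ and the leading term is precisely the one displayed in Equation~\eqref{eq:AsymptoticHDensityinfinity}. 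The $O(\cdot)$ captures the lower-order terms of that asymptotic series, so no further bookkeeping is needed.

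For the behaviour at $0^+$, the standard tool is the residue/series expansion obtained by shifting the contour $\mathcal{L}_{\mathrm{i}\gamma\infty}$ to the left and collecting residues at the poles $b_{jl}=-(b_j+l)/\beta_j$ of $\prod_{j=1}^m\Gamma(b_j+s\beta_j)$; this is legitimate here because $n=0$ (there are no right poles $a_{ik}$ to obstruct the shift) and because $a^*>0$ ensures convergence of the resulting series near the origin, via Theorem~1.1 (case (1.2.14)/(1.2.20)) in \cite{SaiKil} and Theorem~\ref{thm:ResidueDeltaPosi}. Since the poles $b_{jl}$ are assumed simple, each residue contributes a term proportional to $x^{-b_{jl}} = x^{(b_j+l)/\beta_j}$, and the dominant term as $x\to0^+$ is the one with the smallest exponent, namely $\min_{j}\min_{l\ge0}(b_j+l)/\beta_j = \min_j b_j/\beta_j =: \rho$. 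The strict inequality $\rho>-1$ is forced by Assumption~(2) of Lemma~\ref{lem:AllHdensity}, which requires $b_j+\beta_j>0$, i.e.\ $b_j/\beta_j>-1$ for each $j=1,\dots,m$; I would spell this out explicitly since it is exactly what makes the stated bound meaningful (and consistent with $\varrho$ being integrable at $0$).

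Concretely, the steps in order are: (i) record that for $H^{m,0}_{p,m}$ one has $a_1^*=a^*=\Delta=\sum_j\beta_j-\sum_i\alpha_i$ and $a_2^*=0$; (ii) cite the $|z|\to\infty$ asymptotic expansion of $H^{m,n}_{p,q}$ for $\Delta>0$ from \cite{SaiKil} and extract the leading algebraic-times-exponential factor, giving \eqref{eq:AsymptoticHDensityinfinity}; (iii) shift the contour leftwards and sum residues at the simple poles $b_{jl}$, justified by $n=0$ and $a^*>0$ together with Theorem~\ref{thm:ResidueDeltaPosi}, to get a convergent power series near $0$; (iv) identify the leading exponent as $\rho=\min_j b_j/\beta_j$ and conclude \eqref{eq:AsymptoticHDensityzero}; (v) observe $\rho>-1$ from Assumption~(2) of Lemma~\ref{lem:AllHdensity}. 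The main obstacle is not any single computation but rather the careful matching of hypotheses: one must check that the case-distinctions in the classical Fox-$H$ asymptotic theorems (which branch on the signs of $\Delta$, $a^*$, $\mu$, and on contour type) all collapse, under $n=0$, $q=m$, $a^*>0$, and simple $b_{jl}$, to the single clean regime used above — in particular that the exponential-decay expansion at infinity is the correct branch and that the left-shift at the origin is unobstructed. I would therefore devote the bulk of the write-up to this verification and treat the extraction of leading terms as routine.
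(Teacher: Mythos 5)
Your proposal is correct and follows essentially the same route as the paper: the paper simply cites Corollary~1.10.2 in \cite{SaiKil} (exponential--algebraic decay at infinity for $m=q$, $n=0$, $\Delta>0$) and Corollary~1.11.1 in \cite{SaiKil} (leading power $x^{\min_j b_j/\beta_j}$ at the origin from the simple poles $b_{j0}$), which is exactly the content you re-derive via the residue/contour-shift argument. Your explicit remark that $\rho>-1$ follows from $b_j+\beta_j>0$ is a harmless addition the paper leaves implicit.
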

    \begin{proof}

    The asymptotic formula in Equation~\eqref{eq:AsymptoticHDensityinfinity} is a consequence of Corollary~1.10.2 in \cite{SaiKil} for the Fox-$H$ function with $m=q$, $n=0$ and $\Delta>0$, while the asymptotic formula in Equation~\eqref{eq:AsymptoticHDensityzero} holds from Corollary~1.11.1 in \cite{SaiKil}.
\end{proof}
 \begin{remark} For the asymptotic expansion at zero in the case that some poles $b_{jl}$ coincide, see Corollary~1.12.1 in \cite{SaiKil}.
\end{remark}

We conclude this section by giving two examples of applications of Lemma~\ref{lem:AllHdensity} and Lemma~\ref{lem:HFiniteDensities}. The first focuses on the upper-incomplete gamma function, $\Gamma(\rho,\cdot)$ with $\rho \in (0,1]$, (see \cite{DLMF} for details). Here, we show that $\Gamma(\rho,x):=\int_{x}^{\infty}w^{\rho -1}e^{-w}\mathrm{d}w$, $x \in (0,\infty)$, is a density, if properly normalized.
\begin{example}
	\label{exa:GammaIncompleteDensity}
	For $\rho \in (0,1]$, the upper-incomplete gamma function is obtained as the LT of a density $G^{0,1}_{1,1}(\cdot)$
	
	\[ \mathscr{L}\left(G^{0,1}_{1,1}\left[\cdot \Bigg|\genfrac{}{}{0pt}{}{-\rho}{-1} \right]\right)(s)=H^{2,0}_{1,2}\left[s \, \Bigg|\genfrac{}{}{0pt}{}{(1,1)}{(0,1),(\rho,1)}\right]=\Gamma(\rho,s),\quad  s \geq 0 ,\]
 see~\cite{BegGaj20}. 
	Thus, $\Gamma(\rho,s) \geq0$ for $s\geq 0$ and,
	by Lemma~\ref{lem:AllHdensity}, the following function
	
	\[\varrho(x)=\frac{1}{\Gamma(\rho+1)}H^{2,0}_{1,2}\left[x \, \Bigg|\genfrac{}{}{0pt}{}{(1,1)}{(0,1),(\rho,1)}\right], \quad x>0, \] 
	is a density.
\end{example}
As an application of Lemma~\ref{lem:HFiniteDensities}, we define the generalized $M$-Wright density (also called the generalized Mainardi density) by exploiting the $M$-Wright function $M_\beta$ (also known as Mainardi function), see \cite{MMP10} and \cite{Pag13}, for details.
 \begin{example}[generalized $M$-Wright density]\label{exa:threeparameterWrightdensity}
    Let $a \in \mathbb{R}$ and $\alpha>0$ be such that $a+\alpha>0$ and let $\beta \in (0,1)$. We define the density $\varrho(x)$, for $x \in (0,\infty)$, as follows 
			\begin{equation}\label{eq:gMWrightdensity}  \varrho(x):=\frac{\Gamma(1-\beta + \beta a +\beta \alpha)}{\Gamma(a+\alpha)} H^{1,0}_{1,1}\left[ x\, \Bigg| \genfrac{}{}{0pt}{}{(1-\beta+\beta a,\beta\alpha)}{(a, \alpha)} \right]. \end{equation}
			The fact that $\varrho(\cdot)$ is a density follows from Lemma~\ref{lem:HFiniteDensities}. Indeed, the only non-trivial condition from Lemma~\ref{lem:HFiniteDensities} to check is the non-negativity of $\varrho(\cdot)$. From Equation~(2.1.4) in \cite{SaiKil} and Equation~(1.59) in \cite{Mathai2009} we have that
			\[ \alpha H^{1,0}_{1,1}\left[ x \Bigg| \genfrac{}{}{0pt}{}{(1-\beta+\beta a,\beta\alpha)}{(a, \alpha)} \right]=x^{\frac{a}{\alpha}} H^{1,0}_{1,1}\left[ x^{\frac{1}{\alpha}} \Bigg| \genfrac{}{}{0pt}{}{(1-\beta,\beta)}{(0, 1)} \right] \quad x>0. \]
			Since the Fox-$H$ function on the right-hand side is the $M$-Wright function (see Equation~(4.9) in \cite{MPS05}), we have that the Fox-$H$ function on the left-hand side is non-negative. The density $\varrho(\cdot)$ defined in Equation~\eqref{eq:gMWrightdensity} is called generalized $M$-Wright density and the random variable associated with this density is called generalized $M$-Wright random variable.
  \end{example}

\section{Particular classes of Fox-$H$ densities with all moments finite}\label{sec:FoxHDensities}

		In this section, we present several classes of Fox-$H$ densities covered by the Lemma~\ref{lem:HFiniteDensities}.

\begin{theorem}\label{thm:Hdensity}
			
   Let $m,p,l \in \mathbb{N}_0$ with $e_j, a_k \in \R$, $\eta_j,\gamma_k,\alpha_k,b_i>0$, $\beta_k \in (0,1)$ such that $e_j+\eta_j>0$, $a_k+\alpha_k>0$ for $i=1,\dots,p$, $j=1,\dots,p+m$ and $k=1,\dots, l$.
			Then the following functions, $\varrho_\iota(\cdot)$, $\iota=0,\dots,7$, belong to the FHdam class.  
\begin{description}			
				\item[(C0)] For $m=p=0$,
				\begin{equation}\label{eq:H0000equalsdelta}
					\varrho_0(x):=\frac{1}{K_0}H^{0,0}_{0,0}\Big[ x\, \Big|\genfrac{}{}{0pt}{}{-\!-}{-\!-} \Big] =\delta_1(x), \quad x>0,
				\end{equation}
				where $K_0:=1$.
				\item[(C1)] For $m>0$, 
				
				\begin{equation*}
					\varrho_1(x):= \frac{1}{K_1} H^{m,0}_{0,m}\left[ x\, \Bigg|
					\genfrac{}{}{0pt}{}{{-\!-}\!{-\!-}\!{-\!-}}{(e_i, \eta_i)_{1,m}}\right], \quad x >0,
				\end{equation*}
				where $ K_1:= \prod_{i=1}^{m}\Gamma(e_i+\eta_i)$.
				
				\item[(C2)] For $p>0$, 
				
				\begin{equation*}
					\varrho_2(x):= \frac{1}{K_2} H^{p,0}_{p,p}\left[ x \Bigg|
					\genfrac{}{}{0pt}{}{(e_i+b_i,\eta_i)_{1,p}}{(e_i, \eta_i)_{1,p}}\right], \quad x \in (0,1),
				\end{equation*}
				where 
				$ K_2:= \prod_{i=1}^p \dfrac{\Gamma(e_i+\eta_i)}{\Gamma(e_i +b_i+\eta_i)}. $

				\item[(C3)] For $m,p>0$,
				\begin{equation*}\varrho_3(x):=\frac{1}{K_3} H^{m+p,0}_{p,m+p}\left[ x \, \Bigg|\genfrac{}{}{0pt}{}{(e_i+b_i,\eta_i)_{1,p}}{(e_i, \eta_i)_{1,p+m}} \right], \quad x>0,  \end{equation*}
				where 
				$ K_3:= \prod_{i=1}^p \dfrac{\Gamma(e_i+\eta_i)} {\Gamma(e_i +b_i+\eta_i)}\prod_{i=p+1}^{p+m}\Gamma(e_i+\eta_i). $
				
				\item[(C4)] For $l>0$,
				\[  \varrho_4(x):=\frac{1}{K_4}H^{l,0}_{l,l}\left[ x \Bigg| \genfrac{}{}{0pt}{}{(1 -\beta_k + \beta_kc_k,\alpha_k \beta_k\gamma_k)_{1,l}}{(c_k, \alpha_k\gamma_k)_{1,l}}\right], \quad x>0. \]
                 where $c_k:=a_k-\alpha_k\gamma_k+\alpha_k$ and
                $  K_4:=\prod_{k=1}^l \dfrac{\Gamma(a_k+\alpha_k)}{\Gamma(1-\beta_k+\beta_ka_k + \beta_k \alpha_k)}. $
				
				\item[(C5)] For $m>0$ and $l>0$, 
				
				\begin{equation*}
					\varrho_5(x):= \frac{1}{K_5} H^{m+l,0}_{l,m+l}\left[ x\, \Bigg|
					\genfrac{}{}{0pt}{}{(1 -\beta_k + \beta_kc_k,\alpha_k \beta_k\gamma_k)_{1,l}}{(e_i, \eta_i)_{1,m}(c_k, \alpha_k\gamma_k)_{1,l}}\right], \quad x >0,
				\end{equation*}
				where $c_k:=a_k-\alpha_k\gamma_k+\alpha_k$ and $K_5:=K_1K_4$.
				
				\item[(C6)] For $p>0$ and $l>0$, 
				
				\begin{equation*}
					\varrho_6(x):= \frac{1}{K_6} H^{p+l,0}_{p+l,p+l}\left[ x \Bigg|
					\genfrac{}{}{0pt}{}{(e_i+b_i,\eta_i)_{1,p}(1 -\beta_k + \beta_kc_k,\alpha_k \beta_k\gamma_k)_{1,l}}{(e_i, \eta_i)_{1,p}(c_k, \alpha_k\gamma_k)_{1,l}}\right], \quad x>0,
				\end{equation*}
				where $c_k=a_k-\alpha_k\gamma_k+\alpha_k$ and $K_6:=K_2K_4$.

				\item[(C7)] For $m >0$, $ p> 0$ and $l>0$,
				\begin{equation*}\varrho_7(x):= \frac{1}{K_7} H^{m+p+l,0}_{p+l,m+p+l}\left[ x \, \Bigg|\genfrac{}{}{0pt}{}{(e_i+b_i,\eta_i)_{1,p}(1 -\beta_k + \beta_kc_k,\alpha_k \beta_k\gamma_k)_{1,l}}{(e_i, \eta_i)_{1,p+m}(c_k, \alpha_k\gamma_k)_{1,l}} \right], \quad x>0,  \end{equation*}
				where $c_k=a_k-\alpha_k\gamma_k+\alpha_k$ and $K_7:=K_3K_4$. 
				
			\end{description}
   
		\end{theorem}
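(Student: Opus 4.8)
The plan is to reduce each of the eight assertions to a single sufficient criterion, namely the hypotheses of Lemma~\ref{lem:HFiniteDensities} (equivalently, those of Lemma~\ref{lem:AllHdensity} with $n=0$ and $q=m$), and to handle the non-negativity requirement — the only substantive one — by a Mellin-convolution argument. Since $n=0$ throughout, $\mathcal H^{m,0}_{p,m}$ has no poles of type $a_{ik}$, so the simple-pole clause of Assumption~1 and the inequalities for the indices $n+1,\dots,p$ reduce to $a_i+\alpha_i>0$ for $i=1,\dots,p$, and $q=m$ removes the clause $b_j+\beta_j<1$, leaving $b_j+\beta_j>0$ for $j=1,\dots,m$; these I read off class by class from the standing hypotheses, using $c_k+\alpha_k\gamma_k=a_k+\alpha_k$ for the classes written through $(c_k,\alpha_k\gamma_k)$. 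For Assumption~3 I compute $a^*$ directly and find, for (C1), (C3), (C4), (C5), (C6), (C7), that $a^*$ is a sum of non-negative $\eta$-terms plus terms $\alpha_k\gamma_k(1-\beta_k)$, at least one of which is strictly positive — here $\beta_k\in(0,1)$ together with $m>0$ (resp.\ $l>0$) in the relevant class is exactly what makes $a^*>0$ — while (C0) is degenerate and (C2) is the sole class with $a^*=0$, to be treated directly.

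For non-negativity (Assumption~4) I first record that three elementary Fox-$H$ kernels are non-negative on their support: the Weibull-type kernel $H^{1,0}_{0,1}\!\left[x\,\Big|\genfrac{}{}{0pt}{}{-}{(e,\eta)}\right]=\eta^{-1}x^{e/\eta}\exp(-x^{1/\eta})$; the Beta-type kernel $H^{1,0}_{1,1}\!\left[x\,\Big|\genfrac{}{}{0pt}{}{(e+b,\eta)}{(e,\eta)}\right]$, which by Equation~(2.1.4) in \cite{SaiKil} is, up to a positive constant, $x^{e/\eta}(1-x^{1/\eta})^{b-1}$ on $(0,1)$ and $0$ on $(1,\infty)$; and the generalized $M$-Wright kernel $H^{1,0}_{1,1}\!\left[x\,\Big|\genfrac{}{}{0pt}{}{(1-\beta_k+\beta_kc_k,\alpha_k\beta_k\gamma_k)}{(c_k,\alpha_k\gamma_k)}\right]$, which by the computation in Example~\ref{exa:threeparameterWrightdensity} equals $(\alpha_k\gamma_k)^{-1}x^{c_k/(\alpha_k\gamma_k)}M_{\beta_k}\!\big(x^{1/(\alpha_k\gamma_k)}\big)\ge0$ — its hypotheses hold because $c_k+\alpha_k\gamma_k=a_k+\alpha_k>0$ and $\beta_k\in(0,1)$. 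Since the Mellin transform of the Fox-$H$ in (C1) (resp.\ (C2), (C4)) factors as a product of $m$ (resp.\ $p$, $l$) copies of the Mellin transform of the corresponding kernel, and a Fox-$H$ function is determined by its Mellin--Barnes integrand, each of $\varrho_1,\varrho_2,\varrho_4$ is an iterated Mellin convolution $(f\star g)(x)=\int_0^\infty f(x/t)\,g(t)\,t^{-1}\,\mathrm{d}t$ of non-negative functions — this is Corollary~2.9.1 in \cite{SaiKil}, i.e.\ procedure~$\mathbf A$ applied to the lower parameters $(b_j,\beta_j)$ — hence non-negative.

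For the composite classes I factor the Mellin transforms in the same way, obtaining $\varrho_3\leftrightarrow(\mathrm{C2})\star(\mathrm{C1})$, $\varrho_5\leftrightarrow(\mathrm{C1})\star(\mathrm{C4})$, $\varrho_6\leftrightarrow(\mathrm{C2})\star(\mathrm{C4})$ and $\varrho_7\leftrightarrow(\mathrm{C2})\star(\mathrm{C1})\star(\mathrm{C4})$ as Mellin convolutions of the base kernels; by Theorem~2.9 in \cite{SaiKil} (equivalently Theorem~\ref{thm:FoxHPosi} in its lower-parameter form) each composite is the law of a product of independent random variables from the base classes, so it is non-negative and of total mass one, and since $a^*>0$ in every composite class Lemma~\ref{lem:HFiniteDensities} then certifies membership in FHdam; finiteness of all moments can also be read off from $\mathbb{E}[(X_1\cdots X_r)^l]=\prod_i\mathbb{E}[X_i^l]<\infty$. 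Class (C0) is immediate: $\varrho_0=H^{0,0}_{0,0}=\delta_1$ by Lemma~\ref{lem:ClosureLemma}, the degenerate law at $1$, which trivially lies in FHdam.

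I expect the crux to lie in two places. The minor one is the bookkeeping needed to invoke Corollary~2.9.1 / Theorem~2.9 in \cite{SaiKil} at each level of the iteration — one must verify the additivity of $a^*$ and the positivity of the two pieces it splits into, also for the auxiliary functions; this is precisely where $\beta_k\in(0,1)$ and $\eta_j,\alpha_k,\gamma_k>0$ enter. The genuine obstacle is class (C2) (hence the (C2)-factor inside (C6) and (C7)): there $a^*=0$, and Assumption~3 of Lemma~\ref{lem:AllHdensity}, which in the borderline case demands $\mu<-1$, is \emph{not} implied by $b_i>0$ alone. I will sidestep this directly: the Beta-convolution representation above exhibits $\varrho_2$ as a genuine probability density supported on $(0,1)$ — integrability at $0^+$ and at $1^-$ being controlled by $e_i+\eta_i>0$ and $b_i>0$ respectively — so that its bounded support makes all moments automatically finite, and the product structure transports this to (C6) and (C7).
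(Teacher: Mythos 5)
Your proposal is correct and follows essentially the same route as the paper: you identify the same three base kernels (stretched gamma, power-of-beta, generalized $M$-Wright) and assemble (C3)--(C7) as Mellin convolutions, which is exactly the content of the paper's probabilistic argument via Theorems~\ref{thmpowertH0inf} and \ref{thmproductH} (products and powers of independent Fox-$H$ variates), with non-negativity inherited from the known gamma, beta and $M$-Wright densities. Your explicit treatment of (C2), where $a^*=0$ and Assumption~3 of Lemma~\ref{lem:AllHdensity} would require $\mu=-\sum_i b_i<-1$ (not implied by $b_i>0$ alone), so that normalization and moment-finiteness are instead read off the bounded-support beta-convolution representation, is a careful refinement of a point the paper's proof passes over silently.
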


  \begin{remark}\label{rem:classes-rvariables}
Before proving Theorem~\ref{thm:Hdensity} we would like to give some details on the eight classes we found using Lemma~\ref{lem:ClosureLemma}. The main feature is that the classes (C1), (C2), and (C4) generate (C3), (C5)--(C7). More precisely: 
\begin{itemize}
\item The class (C0) corresponds to the degenerate random variable with distribution $\delta_1$, see also Remark \ref{rem:C0generalized} for further considerations.
\item The class (C1) describes the class of gamma random variables, their product and powers. 
\item The class (C2) is similar to (C1) for beta random variables.
\item The class (C3) is obtained as the product of classes (C1) and (C2).
\item The class (C4) is associated with generalized $M$-Wright random variables, its products, and powers; see Equation~\eqref{eq:gMWrightdensity}.
\item The class (C5) is obtained as the product of classes (C1) and (C4).
\item The class (C6) is obtained as the product of classes (C2) and (C4).
\item The class (C7) is obtained as the product of the classes (C3) and (C4), or, equivalently, as the product of the classes (C1), (C2), and (C4).
\end{itemize}
\end{remark}
		\begin{proof}[Proof of Theorem~\ref{thm:Hdensity}]
In all cases, we have to check the assumptions of Lemma~\ref{lem:HFiniteDensities}. In addition, it is easy to verify assumptions 1--3 in the Lemma~\ref{lem:AllHdensity} and it only remains to check the non-negativity of the functions $\varrho_\iota(\cdot)$, $\iota=0,\dots,7$. To this aim, we note that the Fox-$H$ functions appearing in (C1)--(C7) are densities of known random variables, as pointed out in Remark~\ref{rem:classes-rvariables}. The class (C0) corresponds to a degenerate random variable (i.e.~its "density" is a distribution, more precisely the delta function at $1$). For that reason, we specify the random variables that are behind each case in the rest of the proof.\\

    \noindent (C0) The result follows from Lemma~\ref{lem:ClosureLemma}.\\
			(C1) Let $m >0$, $e_i \in \R$, and $\eta_i>0$ be such that $e_i+\eta_i>0$, $i=1,\dots,m$. Moreover, let $X_i$, $i=1,\dots,m$, be independent random variables with gamma densities, i.e. 
			\[ \varrho_{X_i}(x)=\frac{1}{\Gamma(e_i+\eta_i)}x^{e_i+\eta_i-1}\mathrm{e}^{-x}=\frac{1}{\Gamma(e_i+\eta_i)}G^{1,0}_{0,1}\left[x\, \Bigg|\genfrac{}{}{0pt}{}{{-\!-}\!{-\!-}\!{-\!-}}{e_i+\eta_i-1}\right], \quad x> 0;
			\]
    see Equation~(2.9.4) in \cite{SaiKil}.
			By Theorem~\ref{thmpowertH0inf}, the r.v.'s $Y_i:=X_i^{\eta_i}$ have densities given by
			\[  \varrho_{Y_i}(y)=\frac{1}{\Gamma(e_i+\eta_i)}H^{1,0}_{0,1}\left[y\, \Bigg|\genfrac{}{}{0pt}{}{{-\!-}\!{-\!-}}{(e_i,\eta_i)} \right], \quad y > 0, \]
			for $i=1,\dots,m$.\\
			Using Theorem~\ref{thmproductH}, the r.v. $Y:=\prod_{i=1}^m Y_i$ has density
			\[  \varrho_1(x):=\prod_{i=1}^m \frac{1}{\Gamma(e_i+\eta_i)} H^{m,0}_{0,m}\left[x\, \Bigg|\genfrac{}{}{0pt}{}{{-\!-}\!{-\!-}\!{-\!-}}{(e_i,\eta_i)_{1,m}}\right], \quad x > 0.  \]
			(C2) Let $p>0$ be given and consider, for $i=1,\dots,p$, $e_i\in \R$, $b_i,\eta_i>0$ such that $e_i+\eta_i>0$, $i=1,\dots,p$, and $X_i$ independent random variables with beta densities:
			
			\[   \varrho_{X_i}(x)=\frac{1}{K_{i,2}}x^{e_i+\eta_i-1}(1-x)^{b_i-1}=\frac{1}{K_{i,2}}G^{1,0}_{1,1}\left[x \, \Bigg|\genfrac{}{}{0pt}{}{e_i+\eta_i+b_i-1}{e_i+\eta_i-1} \right],\quad x \in (0,1),  \]
		       where 
			\[K_{i,2}:=\frac{\Gamma(e_i+\eta_i)}{\Gamma(e_i+\eta_i+b_i)};\]
            see Equation~(2.9.6) in \cite{SaiKil}.
			By applying Theorem~\ref{thmpowertH0inf} and considering Remark~\ref{rem:thmProductPowerHdensityon01}-$i)$, the r.v.'s $Y_i=X_i^{\eta_i}$, $i=1,\dots,p$, have densities:
			
			\[ \varrho_{Y_i}(y)= \frac{1}{K_{i,2}}  H^{1,0}_{1,1}\left[y \,\Bigg|\genfrac{}{}{0pt}{}{(e_i+b_i,\eta_i)}{(e_i,\eta_i)}\right], \quad y \in (0,1).   \]
			By applying Theorem~\ref{thmproductH} to $Y:=\prod_{i=1}^p Y_i$ and considering Remark~\ref{rem:thmProductPowerHdensityon01}-$i)$, the density of the r.v.~$Y$ reads
			\[\varrho_2(x):=\frac{1}{K_2}  H^{p,0}_{p,p}\left[x \,\Bigg|\genfrac{}{}{0pt}{}{(e_i+b_i,\eta_i)_{1,p}}{(e_i,\eta_i)_{1,p}}\right], \quad x \in (0,1), \]
			where \[K_2=\prod_
			{i=1}^p K_{i,2}=\prod_{i=1}^p \frac{\Gamma(e_i+\eta_i)}{\Gamma(e_i+\eta_i +b_i)}.\]
			(C3) Let $p>0$ and $m>0$ be given and consider $e_j\in \R$, $b_i,\eta_j>0$, $i=1,\dots,p$, $j=1,\dots,p+m$, such that $e_j+\eta_j>0$,  for  $j=1,\dots,p+m$. Define $X:=X_1X_2$, where $X_1$ and $X_2$ are r.v.'s with densities $\varrho_1$ and $\varrho_2$, respectively. It turns out that $X$ has a Fox-$H$ density  $\varrho_3(\cdot)$ given by Theorem~\ref{thmproductH} and Remark~\ref{rem:thmProductPowerHdensityon01}-$i)$, as
			
			\[  \varrho_3(x):=\frac{1}{K_3} H^{p+m,0}_{p,p+m}\left[ x \, \Bigg|\genfrac{}{}{0pt}{}{(e_i+b_i,\eta_i)_{1,p}}{(e_i, \eta_i)_{1,p+m}} \right], \quad x \in (0,\infty),  \]
			where 
			\[ K_3= \prod_{i=1}^p \frac{\Gamma(e_i+\eta_i)}{\Gamma(e_i +b_i+\eta_i)} \prod_{i=p+1}^{p+m}\Gamma(e_i+\eta_i). \] 
			(C4) Let $l> 0$, $\gamma_k >0$, $X_k$ be given, $k=1,\dots,l$, where $X_k$ are independent generalized $M$-Wright r.v.'s with parameters $a_k,\alpha_k$ and $\beta_k$, see Equation \eqref{eq:gMWrightdensity}.
             We compute the density $\varrho_{Y}(\cdot)$ of the r.v.~$Y:=\prod_{k=1}^l X_k^{\gamma_k}$ by using Theorem~\ref{thmpowertH0inf} and Theorem~\ref{thmproductH}:
			
			\[  \varrho_4(x)=\frac{1}{K_4}H^{l,0}_{l,l}\left[ x \Bigg| \genfrac{}{}{0pt}{}{(1 -\beta_k + \beta_ka_k +\beta_k \alpha_k -\beta_k \alpha_k \gamma_k,\alpha_k \beta_k\gamma_k)_{1,l}}{(a_k-\alpha_k\gamma_k+\alpha_k, \alpha_k\gamma_k)_{1,l}}\right], \quad x>0, \]
   where 
   \[  K_4=\prod_{k=1}^l \frac{\Gamma(a_k+\alpha_k)}{\Gamma(1-\beta_k+\beta_ka_k + \beta_k \alpha_k)}. \]
			(C5)--(C7) Applying Theorem~\ref{thmproductH} to the product of r.v.'s in (C4) and (C1) (resp.~(C3)), we obtain (C5) (resp.~(C7)). Analogously, to obtain the density in the class (C6), we apply Theorem~\ref{thmproductH} to the product of r.v.'s in (C4) and (C3). This concludes the proof.
		\end{proof}
		
		\begin{remark} \label{rem:C0generalized} 
 				The class (C0) may be generalized as follows: for every $x_0>0$,
				\[H^{0,0}_{0,0}\left( \frac{x}{x_0}\right)\equiv \delta_{x_0}(x), \quad x>0,\] 
				by a similar proof as in Lemma~\ref{lem:ClosureLemma}. Furthermore, the class (C0) is also obtained as a limit of class (C1). In fact, if we choose $m=1$, $e_1=e>0$ and $\eta_1=\eta$ in (C1), then, by Equation~(2.1.10) in \cite{SaiKil}, the density $\varrho_1(\cdot)$ in Theorem~\ref{thm:Hdensity} converges in a weak sense to the delta distribution in one, that is, for every $x>0$, $\varrho_1(x) \to \delta_1(x)$, as $\eta\to0$.
				
		\end{remark}

\begin{remark}
   The densities of classes (C2) and (C3) were studied in \cite{Spri80} and \cite{MatSax69} with a slightly different choice of parameters, see Equations~(6.4.8) and (6.4.9) in \cite{Spri80}.
\end{remark}	

		\begin{remark} For $\iota=0,\dots,7$, the densities $\varrho_\iota(\cdot)$ may be extended from $(0,\infty)$ to $\mathbb{R}\backslash\{0\}$ by $\frac{1}{2}\varrho_\iota(|x/x_0|)$, $x_0 \in \mathbb{R}\backslash \{0\}$.  
		\end{remark}

	The following result proves the complete monotonicity and presents the explicit series representations of $\phi_{\iota}= \mathscr{L}(\varrho_\iota)$, $\iota=0,\dots,7$. The complete monotonicity easily follows by considering that the functions $\phi_\iota$ are LT of the densities $\varrho_\iota$ given in Theorem~\ref{thm:Hdensity}. 
 
 Recall that a function $f \in C^{\infty}((0,\infty))$ is completely monotone if and only if $(-1)^kf^{(k)}(y)\geq 0$, $y>0$, where $f^{(k)}$ is derivative of order $k$, for $k \in \N_{0}$, see \cite{Schil12}.

			\begin{corollary}\label{cor:CompMonoAllmom}
			Let $m,p,l \in \mathbb{N}_0$ with $e_j, a_k \in \R$, $b_i,\eta_j,\gamma_k,\alpha_k>0$, $\beta_k \in (0,1)$ be such that $e_j+\eta_j>0$ and $a_k+\alpha_k>0$ for $i=1,\dots,p$, $j=1,\dots,p+m$ and $k=1,\dots, l$ and $\sum_{i}^p b_i>1$.\\
			Then the functions $\phi_\iota(s)$, $\iota=0,\dots,7$, are completely monotone for $s \in [0,\infty)$.\\ Furthermore, if $\Delta_{\phi_\iota}>0$, they can be extended analytically to the entire complex plan. Hence, for $z \in \mathbb{C}$:

			\begin{description}
				
				\item[(C0)]	For $m=p=0$
				
				\[\phi_0(z)=\, _{0}\Psi_{0}\left[ \genfrac{}{}{0pt}{}{{-\!-}\!{-\!-}}{{-\!-}\!{-\!-}} \middle| -z \right]\\
					=\sum_{n \geq 0} \frac{(-z)^n}{n!}=\exp(-z).
				\]

				\item[(C1)] For $m>0$
				
				\[\phi_1(z)=\frac{1}{K_1}\, _{m}\Psi_{0}\left[ \genfrac{}{}{0pt}{}{(e_i+\eta_i,\eta_i)_{1,m}}{{-\!-}\!{-\!-}\!{-\!-}} \middle| -z\right]\\
					=\frac{1}{K_1}\sum_{n \geq 0} \prod_{i=1}^{m}\Gamma(e_i+\eta_i(1+n)) \frac{(-z)^n}{n!}.
				\]

				\item[(C2)] For $p>0$
				
				\begin{eqnarray*}\phi_2(z)&=&\frac{1}{K_2}\, _{p}\Psi_{p}\left[ \genfrac{}{}{0pt}{}{(e_i+\eta_i,\eta_i)_{1,p}}{(e_i+b_i+\eta_i,\eta_i)_{1,p}} \middle| -z\right]\\
					&=&\frac{1}{K_2}\sum_{n \geq 0} \frac{ \prod_{i=1}^{p}\Gamma(e_i+\eta_i(1+n))}{\prod_{i=1}^{p} \Gamma(e_i+b_i+\eta_i(1+n))} \frac{(-z)^n}{n!}.
				\end{eqnarray*}
				
				\item[(C3)] For $p>0$ and $m>0$ \begin{eqnarray*}\phi_3(z)&=&\frac{1}{K_3}\, _{m+p}\Psi_{p}\left[ \genfrac{}{}{0pt}{}{(e_i+\eta_i,\eta_i)_{1,m+p}}{(e_i+b_i+\eta_i,\eta_i)_{1,p}} \middle| -z\right]\\
					&=&\frac{1}{K_3}\sum_{n \geq 0} \frac{ \prod_{i=1}^{m+p}\Gamma(e_i+\eta_i(1+n))}{\prod_{i=1}^{p} \Gamma(e_i+b_i+\eta_i(1+n))} \frac{(-z)^n}{n!}.
				\end{eqnarray*}

				\item[(C4)] 
				For $l>0$\begin{eqnarray*}\phi_4(z)&=& \frac{1}{K_4}\, _{l}\Psi_{l}\left[ \genfrac{}{}{0pt}{}{(a_k+\alpha_k,\alpha_k\gamma_k)_{1,l}}{(1+\beta_k(a_k+\alpha_k-1),\alpha_k\beta_k\gamma_k)_{1,l}} \middle| -z\right]\\
					&=& \frac{1}{K_4}\sum_{n \geq 0} \frac{ \prod_{i=1}^l \Gamma(a_k+\alpha_k+\alpha_k\gamma_kn)}{\prod_{i=1}^l \Gamma(1+\beta_k(a_k+\alpha_k-1)+\alpha_k\beta_k\gamma_k n)} \frac{(-z)^n}{n!}.
				\end{eqnarray*}

				\item[(C5)] 
				
				For $m>0$ and $l>0$\begin{eqnarray*}\phi_5(z)&=& \frac{1}{K_5}\, _{m+l}\Psi_{l}\left[ \genfrac{}{}{0pt}{}{(e_i+\eta_i,\eta_i)_{1,m},(a_k+\alpha_k,\alpha_k\gamma_k)_{1,l}}{(1+\beta_k(a_k+\alpha_k-1),\alpha_k\beta_k\gamma_k)_{1,l}} \middle| -z\right]\\
					&=& \frac{1}{K_5}\sum_{n \geq 0} \frac{ \prod_{i=1}^{m}\Gamma(e_i+\eta_i(1+n)) \prod_{i=1}^l \Gamma(a_k+\alpha_k+\alpha_k\gamma_kn)}{\prod_{i=1}^l \Gamma(1+\beta_k(a_k+\alpha_k-1)+\alpha_k\beta_k\gamma_k n)} \frac{(-z)^n}{n!}.
				\end{eqnarray*}
				
				\item[(C6)] For $p>0$, $l>0$

    \[ \phi_6(z)= \frac{1}{K_6}\, _{p+l}\Psi_{p+l}\left[ \genfrac{}{}{0pt}{}{(e_i+\eta_i,\eta_i)_{1,p},(a_k+\alpha_k,\alpha_k\gamma_k)_{1,l}}{(e_i+b_i+\eta_i,\eta_i)_{1,p},(1+\beta_k(a_k+\alpha_k-1),\alpha_k\beta_k\gamma_k)_{1,l}} \middle| -z\right] \]\[
	= \frac{1}{K_6}\sum_{n \geq 0} \frac{ \prod_{i=1}^{p}\Gamma(e_i+\eta_i(1+n)) \prod_{i=1}^l \Gamma(a_k+\alpha_k+\alpha_k\gamma_kn)}{\prod_{i=1}^{p} \Gamma(e_i+b_i+\eta_i(1+n))\prod_{i=1}^l \Gamma(1+\beta_k(a_k+\alpha_k-1)+\alpha_k\beta_k\gamma_k n)} \frac{(-z)^n}{n!}.
				\]
				
				\item[(C7)] For $p>0$, $m>0$ and $l>0$\[ \phi_7(z)= \frac{1}{K_7}\, _{m+p+l}\Psi_{p+l}\left[ \genfrac{}{}{0pt}{}{(e_i+\eta_i,\eta_i)_{1,m+p},(a_k+\alpha_k,\alpha_k\gamma_k)_{1,l}}{(e_i+b_i+\eta_i,\eta_i)_{1,p},(1+\beta_k(a_k+\alpha_k-1),\alpha_k\beta_k\gamma_k)_{1,l}} \middle| -z\right]\]
					\[ =\frac{1}{K_7}\sum_{n \geq 0} \frac{ \prod_{i=1}^{m+p}\Gamma(e_i+\eta_i(1+n)) \prod_{i=1}^l \Gamma(a_k+\alpha_k+\alpha_k\gamma_kn)}{\prod_{i=1}^{p} \Gamma(e_i+b_i+\eta_i(1+n))\prod_{i=1}^l \Gamma(1+\beta_k(a_k+\alpha_k-1)+\alpha_k\beta_k\gamma_k n)} \frac{(-z)^n}{n!}.
				\]
			\end{description}

		\end{corollary}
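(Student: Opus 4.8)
The plan is to establish the two assertions of the corollary in turn: first the complete monotonicity of $\phi_\iota$ on $[0,\infty)$ for $\iota=0,\dots,7$, and then, under the extra hypothesis $\Delta_{\phi_\iota}>0$, the entire extension together with the stated generalized Wright series. For complete monotonicity, I would invoke Theorem~\ref{thm:Hdensity}: for $\iota=1,\dots,7$ the function $\varrho_\iota$ is a probability density on $(0,\infty)$ in the FHdam class (so all its moments are finite), while $\varrho_0=\delta_1$. Hence $\phi_\iota(s)=\int_0^\infty e^{-sx}\varrho_\iota(x)\,\mathrm{d}x$ for $\iota\ge 1$, and $\phi_0(s)=e^{-s}$. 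Differentiating under the integral sign --- legitimate for $s>0$, and extending continuously to $s=0$ because all moments are finite --- gives $(-1)^k\phi_\iota^{(k)}(s)=\int_0^\infty x^k e^{-sx}\varrho_\iota(x)\,\mathrm{d}x\ge 0$ for every $k\in\N_0$; equivalently this is Bernstein's theorem for the probability measure $\varrho_\iota(x)\,\mathrm{d}x$, cf.~\cite{Schil12}. Since $\phi_0(s)=e^{-s}$ is completely monotone by inspection, this settles the first assertion for all eight classes.

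For the series and the entire extension, the class (C0) is immediate: $\phi_0(s)=e^{-s}=\sum_{n\ge 0}(-s)^n/n!$, which is the entire function ${}_0\Psi_0$ with empty parameter lists. For $\iota=1,\dots,7$, each $\varrho_\iota$ is already displayed in the form $\frac{1}{K_\iota}H^{m_\iota,0}_{p_\iota,m_\iota}\!\left[x\,\middle|\,(a^{(\iota)}_i,\alpha^{(\iota)}_i);(b^{(\iota)}_j,\beta^{(\iota)}_j)\right]$ of Equation~\eqref{eq:densitiesFiniteMoments}, and I would apply Lemma~\ref{lem:HFiniteDensities} after checking its hypotheses (those of Lemma~\ref{lem:AllHdensity} with $n=0$, $q=m$): assumptions 1 and 2 follow from $e_j+\eta_j>0$, $a_k+\alpha_k>0$, $\eta_j,\gamma_k,\alpha_k>0$ and $\beta_k\in(0,1)$ (the last being needed for $1+\beta_k(a_k+\alpha_k-1)=(1-\beta_k)+\beta_k(a_k+\alpha_k)>0$ in the $M$-Wright blocks), assumption 4 holds by Theorem~\ref{thm:Hdensity}, and assumption 3 holds since $a^*>0$ for (C1) and (C3)--(C7) --- in each such case $a^*$ is a nonempty sum of the strictly positive quantities $\eta_j$ and $\alpha_k\gamma_k(1-\beta_k)$, the sum being nonempty because of the standing hypotheses $m>0$ or $l>0$ --- whereas for the beta block (C2) one has $a^*=0$ and $\mu=-\sum_{i=1}^p b_i$, so the extra hypothesis $\sum_{i=1}^p b_i>1$ is exactly what gives $\mu<-1$. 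Then, whenever $\Delta_{\phi_\iota}>0$ --- equivalently $\sum_i\alpha^{(\iota)}_i-\sum_j\beta^{(\iota)}_j>-1$, since $\Delta_{\phi_\iota}=1-\Delta_{\varrho_\iota}$ with $\Delta_{\varrho_\iota}=\sum_j\beta^{(\iota)}_j-\sum_i\alpha^{(\iota)}_i$ --- Lemma~\ref{lem:HFiniteDensities} yields the entire function $\phi_\iota(s)=\frac{1}{K_\iota}\,{}_{m_\iota}\Psi_{p_\iota}\!\left[\genfrac{}{}{0pt}{}{(b^{(\iota)}_j+\beta^{(\iota)}_j,\beta^{(\iota)}_j)}{(a^{(\iota)}_i+\alpha^{(\iota)}_i,\alpha^{(\iota)}_i)}\,\middle|\,-s\right]$.

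It then remains to substitute the explicit parameters and recognise the displayed functions. For (C1)--(C3) this is a relabelling using $b^{(\iota)}_j+\beta^{(\iota)}_j=e_j+\eta_j$ and, in the beta blocks, $a^{(\iota)}_i+\alpha^{(\iota)}_i=e_i+b_i+\eta_i$; expanding ${}_m\Psi_p[\,\cdot\mid -z]=\sum_{n\ge 0}\frac{\prod_i\Gamma(A_i+\alpha_i n)}{\prod_j\Gamma(B_j+\beta_j n)}\frac{(-z)^n}{n!}$ and writing $\Gamma(e_j+\eta_j+\eta_j n)=\Gamma(e_j+\eta_j(1+n))$ then produces the stated series. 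For the classes (C4)--(C7) containing generalized $M$-Wright blocks, with $c_k=a_k-\alpha_k\gamma_k+\alpha_k$, the two elementary identities $c_k+\alpha_k\gamma_k=a_k+\alpha_k$ and $(1-\beta_k+\beta_k c_k)+\alpha_k\beta_k\gamma_k=1+\beta_k(a_k+\alpha_k-1)$ convert the numerator and denominator lists output by Lemma~\ref{lem:HFiniteDensities} into exactly those appearing in (C4)--(C7), and the series follow as before. There is no single hard step --- the argument is essentially parameter accounting --- but the part requiring the most care is the combined index and constant tracking in (C5)--(C7), where one must verify consistency with the products $K_5=K_1K_4$, $K_6=K_2K_4$, $K_7=K_3K_4$ inherited from Theorem~\ref{thm:Hdensity}, and observe that $\sum_{i=1}^p b_i>1$ is genuinely used only through the beta factor, where $a^*$ vanishes. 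Absolute convergence of each series for all $z\in\C$, hence the claimed entire extension, is already part of the conclusion of Lemma~\ref{lem:HFiniteDensities} under $\Delta_{\phi_\iota}>0$, so no separate argument is needed.
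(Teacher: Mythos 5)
Your proposal is correct and follows essentially the same route as the paper: complete monotonicity comes from the $\phi_\iota$ being Laplace transforms of the densities $\varrho_\iota$ of Theorem~\ref{thm:Hdensity}, the Wright series comes from the LT identification in Equation~\eqref{eq:LTdensitiesFiniteMoments} (equivalently Theorem~2.3 in \cite{SaiKil}), with $\sum_{i=1}^p b_i>1$ used precisely for the beta block where $a^*=0$ so that $\mu<-1$, and the entire extension under $\Delta_{\phi_\iota}>0$ rests on Theorem~\ref{thm:ResidueDeltaPosi}. The only difference is organizational: you route everything through the hypotheses of Lemma~\ref{lem:HFiniteDensities} once, whereas the paper repeats the LT computation and the continuity-at-zero check class by class before invoking the same analytic-continuation step.
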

		\begin{proof}
			(C0) If $m=p=0$, by means of Equation~\eqref{eq:H0000equalsdelta} we have that
			
			\[  \phi_0(s):=\int_0^\infty \mathrm{e}^{-x s} H^{0,0}_{0,0}(x)\,\mathrm{d}x=\int_0^\infty \mathrm{e}^{-x s} \delta_1(x) \,\mathrm{d}x=\mathrm{e}^{-s}, \quad s \geq 0.\]
			The claim follows from Equation~(2.9.4) in \cite{SaiKil}, with $b=0$, $\beta=1$, and by recalling Equation~\eqref{eq:LTdensitiesFiniteMoments}. \\
			(C1) For $m>0$, the parameters of this class satisfy Theorem~2.3 in \cite{SaiKil}, hence we use it in $*$ as follows
			\[ \phi_1(z):=\frac{1}{K_1}\mathscr{L}\left( H^{m,0}_{0,m}\left[\cdot\, \Bigg|\genfrac{}{}{0pt}{}{{-\!-}\!{-\!-}\!{-\!-}}{(e_i,\eta_i)_{1,m}}\right] \right)(z)
			\overset{*}{=}\frac{1}{K_1}H^{1,m}_{m,1}\left[ z \, \Bigg| \genfrac{}{}{0pt}{}{(1-e_i-\eta_i, \eta_i)_{1,m}}{(0,1) } \right] \]
			for $z \in \mathbb{C}$, with $\Re(z)>0$. By continuity, its value at $z=0$ is equal to $\prod_{i=1}^m \Gamma(e_i+\eta_i)$, which coincides with the value of the normalization of the density of class (C1) and the result follows by Equation~\eqref{eq:LTdensitiesFiniteMoments}. \\
			(C2) For $p>0$ and $\sum_{i=1}^p b_i>1$, the density of the class (C2) in Theorem~\ref{thm:Hdensity} satisfies the hypothesis of Theorem~2.3 in \cite{SaiKil}, so that  \begin{eqnarray*} \phi_2(s)&:=& \frac{1}{K_2}\mathscr{L}\left(H^{p,0}_{p,p}\left[ \cdot \, \Bigg|
				\genfrac{}{}{0pt}{}{(e_i+b_i,\eta_i)_{1,p}}{(e_i, \eta_i)_{1,p}}\right]\right)(s)\\
				&=& \frac{1}{K_2}H^{1,p}_{p,p+1}\left[ s \, \Bigg| \genfrac{}{}{0pt}{}{(1-e_i-\eta_i, \eta_i)_{1,p}}{(0,1),(1-e_{i-1}-b_{i-1}-\eta_{i-1},\eta_{i-1})_{2,p+1} } \right]
			\end{eqnarray*} for $s>0$ and 
			\[ \phi_2(0)=\lim_{s \to 0}\phi_2(s)=1.\]
			An argument analogous to the one used for class (C1) gives the result, for $s \geq 0$.\\
			(C3)--(C7) Similar to the previous ones.\\
   
			Now we extend analytically the above functions $\phi_{\iota}$, $\iota=0,\dots,7$. 

			If $\Delta_{\phi_{\iota}}>0$, $\iota=1,\dots,7$, we apply Theorem~\ref{thm:ResidueDeltaPosi} to extend the domain of the functions $\phi_{\iota}(z)$ from the sector $|\arg(z)|<a^*\pi/2$ to $\mathbb{C}$. By applying Theorem~1.3 in \cite{SaiKil}, we have the series representation in $z \in \mathbb{C}$, which completes the proof.
		\end{proof}

  \begin{remark}
     The generalized Wright function in class (C4) with $l=2$, $a_1+\alpha_1=\gamma/2+1$, $\alpha_1\gamma_1=\alpha$, $\beta_1=2$, $a_2+\alpha_2=1$ $\alpha_2\gamma_2=1$ and $\beta_2=\alpha$ is the eigenfunction of the fractional Bessel derivative $\mathcal{B}^{\alpha}_{\gamma,0}$ with $\alpha \in (0,1/2)$; see Section 5 in \cite{DLS21}.
  \end{remark}

	The following two tables (Table \ref{tab:astarDelta} and Table \ref{tab:mudelta}) explicitly show the parameters $a^*_{\phi_\iota},\Delta_{\phi_\iota},\mu_{\phi_\iota}$ and $\delta_{\phi_\iota}$ for $\iota=0,\dots,7$.

\begin{table}[ht]
\caption{ The parameters $a^{*}_{\phi_\iota}$ and $\Delta_{\phi_\iota}$, $\iota=0,\dots,7$, associated to the LT of the eight classes}\label{tab:astarDelta}

			\begin{tabular}{@{}llll@{}}
					\toprule
				\backslashbox{$\iota$}{Parameter} & $a_{\phi_{\iota}}^{*}$ & $\Delta_{\phi_{\iota}}$\\
				\hline
				$0$ & ${\displaystyle 1}$ & ${\displaystyle 1}$\\ $1$ & ${\displaystyle 1+\sum_{i=1}^{m}\eta_{i}}$ & ${\displaystyle 1-\sum_{i=1}^{m}\eta_{i}}$ \\ $2$ & $1$ & $1$\\ $3$ & ${\displaystyle 1+\sum_{i=p+1}^{p+m}\eta_{i}}$ & ${\displaystyle 1-\sum_{i=p+1}^{p+m}\eta_{i}}$ \\ $4$ & ${\displaystyle 1+\sum_{k=1}^{l}\alpha_{k}\gamma_{k}(1-\beta_{k})}$ & ${\displaystyle 1-\sum_{k=1}^{l}\alpha_{k}\gamma_k(1-\beta_{k})}$\\ $5$ & ${\displaystyle 1+\sum_{i=1}^{m}\eta_{i}+\sum_{k=1}^{l}\alpha_{k}\gamma_{k}(1-\beta_{k})}$ & ${\displaystyle 1-\sum_{i=1}^{m}\eta_{i}-\sum_{k=1}^{l}\alpha_{k}\gamma_{k}(1-\beta_{k})}$\\ $6$ & ${\displaystyle 1+\sum_{k=1}^{l}\alpha_{k}\gamma_{k}(1-\beta_{k})}$ & ${\displaystyle 1-\sum_{k=1}^{l}\alpha_{k}\gamma_{k}(1-\beta_{k})}$\\ $7$ & ${\displaystyle 1+\sum_{i=p+1}^{p+m}\eta_{i}+\sum_{k=1}^{l}\alpha_{k}\gamma_{k}(1-\beta_{k})}$ & ${\displaystyle 1-\sum_{i=p+1}^{p+m}\eta_{i}-\sum_{k=1}^{l}\alpha_{k}\gamma_{k}(1-\beta_{k})}$\\
				\botrule
			\end{tabular}
		
		\end{table}
		
		\afterpage{
			\clearpage
			\centering

\begin{sidewaystable}
\captionof{table}{The parameters $\mu_{\phi_\iota}$ and $\delta_{\phi_\iota}$, $\iota=0,\dots,7$, associated with the LT of the eight classes}	\label{tab:mudelta}		
					\begin{tabular}{@{}llll@{}}
					\toprule
					\backslashbox{$\iota$}{Par.} & $\mu_{\phi_{\iota}}$ & $\delta_{\phi_{\iota}}$\\
					\hline
					$0$ & ${\displaystyle -\frac{1}{2}}$ & ${\displaystyle 1}$\\ $1$ & ${\displaystyle \sum_{i=1}^{m}(e_{i}+\eta_{i})-\frac{m+1}{2}}$ & ${\displaystyle \prod_{i=1}^{m}\eta_{i}^{-\eta_{i}}}$\\ $2$ & ${\displaystyle \frac{1}{2}-\sum_{j=1}^{p}b_{i}}$ & $1$\\ $3$ & ${\displaystyle \frac{m-1}{2}-\sum_{j=1}^{p}b_{i}-\sum_{i=p+1}^{p+m}\left(1-e_{i}-\eta_{i}\right)}$ & ${\displaystyle \prod_{i=p+1}^{p+m}\eta_{i}^{-\eta_{i}}}$\\ $4$ & ${\displaystyle \sum_{k=1}^l(\beta_k-1)(1-a_k-\alpha_k)-\frac{1}{2}}$ & ${\displaystyle \prod_{i=1}^{l}(\gamma_k\alpha_k)^{\alpha_k \gamma_k(\beta_{k}-1)}\prod_{i=1}^{l}\beta_{k}^{\alpha_k \beta_k \gamma_k}}$\\ $5$ & ${\displaystyle \sum_{i=1}^{m}(e_{i}+\eta_{i})+\sum_{k=1}^l(\beta_k-1)(1-a_k-\alpha_k)-\frac{m-1}{2}}$ & ${\displaystyle \prod_{i=1}^{m}\eta_{i}^{-\eta_{i}}\prod_{i=1}^{l}(\gamma_k\alpha_k)^{\alpha_k \gamma_k(\beta_{k}-1)}\prod_{i=1}^{l}\beta_{k}^{\alpha_k \beta_k \gamma_k}}$\\ $6$ & ${\displaystyle \sum_{k=1}^l(\beta_k-1)(1-a_k-\alpha_k)-\sum_{j=1}^{p}b_{i}}-\frac{1}{2}$ & ${\displaystyle \prod_{i=1}^{l}(\gamma_k\alpha_k)^{\alpha_k \gamma_k(\beta_{k}-1)}\prod_{i=1}^{l}\beta_{k}^{\alpha_k \beta_k \gamma_k}}$\\ $7$ & ${\displaystyle \sum_{k=1}^l(\beta_k-1)(1-a_k-\alpha_k)-\sum_{j=1}^{p}b_{i}-\sum_{i=p+1}^{p+m}\left(1-e_{i}-\eta_{i}\right)+\frac{m-1}{2}}$ & ${\displaystyle \prod_{i=p+1}^{p+m}\eta_{i}^{-\eta_{i}}\prod_{i=1}^{l}(\gamma_k\alpha_k)^{\alpha_k \gamma_k(\beta_{k}-1)}\prod_{i=1}^{l}\beta_{k}^{\alpha_k \beta_k \gamma_k}}$\\
			\botrule
			\end{tabular}

			\end{sidewaystable}
			\clearpage
		}
		
		\newpage

\section{Examples of elementary and special functions}\label{sec:Examples}
We present here examples of well-known special functions that can be obtained as special cases of our Fox-$H$ densities, which by Lemma~\ref{lem:HFiniteDensities} are generalized Wright functions.

\subsection{Examples in class (C0)}
\begin{description}
\item[Exponential function]
If $\varrho(\cdot)$ is in class (C0), then its LT is $ H^{1,0}_{0,1} \left[ s \Big|\genfrac{}{}{0pt}{}{-\!-}{(0,1)} \right] =\mathrm{e}^{-s }$ for $s \geq 0.$
\end{description}

\subsection{Examples in class (C1)}

\begin{description}
\item[Stretched gamma]
If $\varrho(\cdot)$ is in (C1) with $m=1$, $e\in \R$ and $\eta>0$ such that $e+\eta>0$,  then
\[  \varrho(x)=\frac{1}{\Gamma(e+\eta)} H^{1,0}_{0,1}\left[ x\Bigg|\genfrac{}{}{0pt}{}{-\!-}{(e,\eta)} \right]=\frac{1}{\Gamma(e+\eta)\eta}x^{e/\eta}\mathrm{e}^{-x^{1/\eta}}, \quad x \in (0,\infty), \]
see Equation~(2.9.4) in \cite{SaiKil}. For $\eta=1$ its LT is equal to
\[  \phi(s)=\frac{1}{\Gamma(b+\beta)}\, _1\Psi_0\left[\genfrac{}{}{0pt}{}{(e,1)}{-\!-} \Bigg\vert -s \right]= \, _1F_0(e+1;-s)=(1+s)^{-e-1}\]
where $_1F_0(\dots;\cdot)$ is a general hypergeometric function; see Equation~(2.9.3) and (2.9.5) in \cite{SaiKil}.\bigskip{}

\item[Modified Bessel function of the third kind]
If $\varrho(\cdot)$ is in (C1) with $m=2$ and $e_1=-\nu/2$ and $e_2=\nu/2$ and $\eta_1=\eta_2=1/2$ for $ \nu \in (-1,1)\backslash\{0\}$  then, for $x \in (0,\infty)$,
\[  \varrho(x)=\frac{1}{\Gamma((1-\nu)/2)\Gamma((1+\nu)/2))}H^{2,0}_{0,2}\left[ x\Bigg|\genfrac{}{}{0pt}{}{{-\!-}\!{-\!-}\!{-\!-}\!{-\!-}\!{-\!-}\!{-\!-}\!{-\!-}}{(-\nu/2,1/2),(\nu/2,1/2)} \right]=\frac{4K_{\nu}(2x)}{\Gamma((1-\nu)/2)\Gamma((1+\nu)/2))}, \]
where $K_{\nu}(\cdot)$ is the modified Bessel function of the third kind; see Equation~(2.9.19) in \cite{SaiKil} or Equation~(1.128) in \cite{Mathai2009}, for the definition.

\item[Airy function]

If $\varrho(\cdot)$ is in (C1) with $m=2$, $\eta_1=\eta_2=1$, $e_1=0$ and $e_2=1/3$ we have 

\[ \varrho(x)=\frac{1}{\Gamma(4/3)}G^{2,0}_{0,2}\left[ x\Bigg|\genfrac{}{}{0pt}{}{-\!-}{(0),(1/3)} \right]=\frac{2 \pi \sqrt[6]{3}}{\Gamma(4/3)}\mathrm{Ai}(\sqrt[3]{9x}), \quad x \in (0,\infty), \]

where $\mathrm{Ai}(\cdot)$ is the Airy function, see Equation~8.4.29-1 in \cite{PBM90III}.

\end{description}

\subsection{Examples in class (C2)}

\begin{description}

\item[Beta and confluent hypergeometric Kummer function]
If $\varrho(\cdot)$ is in (C2) with $p=1$, $\eta=1$, $e>-1$, $b>1$, then $\varrho(\cdot)$ is a beta density, see Equation~(2.9.6) in \cite{SaiKil}. Its LT reads:
\[ \phi(s)=\frac{\Gamma(e+1+b)}{\Gamma(e+1)}\, _1 \Psi_1 \left[ \genfrac{}{}{0pt}{}{(e+1,1)}{(e+b+1,1)}\bigg| \, -s\right]=\,_1F_1(e+1,e+b+1;-s), \quad s \geq 0, \]
where $_1F_1(e+1,e+b+1;\cdot)$ is the confluent hypergeometric Kummer function, as can be checked by using Equation~\eqref{eq:LTdensitiesFiniteMoments} and Equation~(2.9.14) in \cite{SaiKil}.

\item[Elementary functions]
If $\varrho(\cdot)$ is in Class (C2) with $p=2$, $\eta_1=\eta_2=b_2=1, e_1=e_2=0, b_1=1/2$ we have that  

\[  \varrho(x)=\Gamma\left(\frac{3}{2}\right)G^{2,0}_{2,2}\left[ x\Bigg|\genfrac{}{}{0pt}{}{(1/2), (1)}{(0),(0)} \right]=\log\left( \frac{1+\sqrt{1-x}}{\sqrt{x}}\right), \quad x\in(0,1). \]

Otherwise, if $p=2$ with $\eta_1=\eta_2=b_1=1$, $e_1=0$, $b_2=e_2=1/2$ we get 
\[ \varrho(x)=\Gamma\left(\frac{3}{2}\right)G^{2,0}_{2,2}\left[ x\Bigg|\genfrac{}{}{0pt}{}{(1), (1)}{(0),(1/2)} \right]=\arccos(\sqrt{x}), \quad x \in (0,1); \]
see Equations 8.4.6-22 and 8.4.7-1 in \cite{PBM90III} for representations of elementary functions through the $G$-functions.

\item[Kabe function]
If $\varrho(\cdot)$ is in class (C2) with $p=m$, $e_i=\gamma_i+1$, $b_i=\delta_i>0$, $\eta_i=1$ such that $\gamma_i>-2$ for $i=1,\dots,m$, then

\begin{equation}\label{eq:DensiKiryako} \varrho(x)=\frac{\Gamma(\delta_i+\gamma_i+2)}{\Gamma(\gamma_i+2)}G^{m,0}_{m,m}\left[ x\Bigg|\genfrac{}{}{0pt}{}{(\gamma_i+\delta_i+1)_{1,m}}{(\gamma_i+1)_{1,m}} \right], \quad x \in (0,1).\end{equation} 

The $G$-function that appears in Equation~\eqref{eq:DensiKiryako}, coincides with the integral kernel used for multiple Erderlélyi-Kober operators in Definition 1.1.1 in \cite{Kir}. Functions of type $G^{m,0}_{m,m}$ appear for the first time in \cite{Kab58} as a distribution for statistical applications.

\end{description}

\subsection{Examples in class (C3)}

\begin{description}

\item[Exponential integral function]
If $\varrho(\cdot)$ is in (C3) with $m=1$, $p=1$, $e_1=e_2=0$, $b_1=1$ and $\eta_1=\eta_2=1$, we have that

\[   \varrho(x)=G^{2,0}_{1,2}\left[ x\Bigg|\genfrac{}{}{0pt}{}{(1)}{(0),(0)} \right]=-\mathrm{Ei}(-x), \quad x>0,\]
where $\mathrm{Ei}(\cdot)$ is the exponential integral function; see  Equation~(25) of Section~(6.9.2) in \cite{ErdI} and Equation~8.4.11-1 in \cite{PBM90III}.

\item[Incomplete gamma]

If $\varrho(\cdot)$ is in (C3) with $p=1$, $m=1$ , $\eta_1=\eta_2=1$, $e_1=0$, $e_2=\rho$ and $b_1=1$ such that $\rho>-1$, then we have that

\[ \varrho(x)=\frac{1}{\Gamma(\rho+1)}G^{2,0}_{1,2}\left[x \, \Bigg|\genfrac{}{}{0pt}{}{(1)}{(0),(\rho)}\right]=\Gamma(\rho,x) \text{ for } x \geq 0 ,\]

see Equation~8.4.16-2 in \cite{PBM90III}.

\item[Gauss hypergeometric function and general Kober operators]
If $\varrho(\cdot)$ is in (C3) with $m=1$, $p=1$, $e_1=\alpha$, $e_2=\beta+n$, $b_1=\gamma-\alpha$ and $\eta_1=\eta_2=1$ with $\alpha,\beta+n>-1$ and $\gamma>\alpha+1$, we see that its LT for $s \geq0$ is given by

\[ \phi(s)=\frac{\Gamma(\gamma+1)}{\Gamma(\beta+n+1)\Gamma(\alpha+1)}G^{1,2}_{2,2}\left[ s \, \Bigg| \genfrac{}{}{0pt}{}{(1-\alpha),(1-\beta-n)}{(0),(1-\gamma)} \right]=\, _2F_1(\alpha,\beta+n;\gamma;-s)  \]
where $_2F_1(\dots;\cdot)$ is the Gauss hypergeometric function involved in generalized Kober operators; see Definition 3.14 in \cite{Mathai2009}.

\end{description}
\subsection{Examples in class (C4)}

\begin{description}

\item[Mittag-Leffler function]
If $\varrho(\cdot)$ is in class (C4) with $l=1$, $\alpha\gamma=1$, $\alpha+a=1$, and $\beta\in (0,1)$, then
\[ \varrho(x)=H^{1,0}_{1,1}\left[ x\Bigg|\genfrac{}{}{0pt}{}{(1-\beta,\beta)}{(0,1)} \right]= M_\beta(x), \quad x \in (0,\infty), 
 \]
where $M_\beta(\cdot)$ is the $M$-Wright function, see \cite{MPS05}.
Its LT is equal to

\[  \phi(s)=H^{1,1}_{1,2}\left[ s\, \Bigg|\genfrac{}{}{0pt}{}{(0,1)}{(0,1), (0,\beta)} \right]=\mathrm{E}_\beta(-s), \quad s\geq 0 ,
\]
where $\mathrm{E}_\beta(\cdot)$ is the Mittag-Leffler function, see \cite{GKMR20}.

\item[Three parameters Mittag-Leffler function]\label{exa:threeparamMittagLeffler}

If $\varrho(\cdot)$ is in class (C4), we choose $l=1$, $A=\beta$, $C=a+\alpha$, $B=1-\beta(1-a-\alpha)$ and $\alpha\gamma=1$ with $a+\alpha>0$, $\beta \in (0,1)$ and $\gamma>0$, so that its LT reads

\[  \phi(s)=\frac{\Gamma(B)}{\Gamma(C)}H^{1,1}_{1,2}\left[ s\, \Bigg|\genfrac{}{}{0pt}{}{(1-C,1)}{(0,1), (1-B,A)} \right]=\Gamma(B)\mathrm{E}^C_{A,B}(-s), \quad s \geq 0, \]
where $E^{C}_{A,B}$ is the three parameters Mittag-Leffler function; see Equation~(6.10) in \cite{KST02}. We note that if $\beta=A=1$, then $\varrho(\cdot)$ is in class (C0). 

\end{description}

Many other examples can be found in \cite{PBM90}, \cite{PBM90III}, \cite{ErdI} and \cite{VellKat}.  In \cite{Mathai2009}, \cite{MS78} and \cite{MatSax} we can find other $H$-functions of the form as $\varrho(\cdot)$ involved in applications.

\section{Conclusions}\label{Sec:Outlook}

We have identified a large class of Fox-$H$ densities and characterized their LT through generalized Wright functions. We give examples covered by the mentioned class of densities. In particular, the beta function, modified Bessel function of the third kind, upper incomplete gamma function, and Mittag-Leffler functions, see Section~\ref{sec:Examples}. 
The above setup is the background for further research directions, including a new class of densities, more refined algebraic structures, and statistical applications; see also \cite{Cook}, \cite{BC87}, \cite{Kab58} and \cite{Spri80}. Its generalization to infinite dimensions, as an example of non-Gaussian analysis, and its applications to various branches of mathematical physics will be further investigated. Through non-Gaussian analysis, we can study certain kinds of semi-Markovian diffusive processes, their time-changed representations, and their generalized functionals, e.g.~Donsker's delta function. 
In view of these further applications, the properties of the classes of functions analyzed here, such as complete monotonicity, appear to be crucial.

\newpage
\appendix
\section{Asymptotic behavior and analytic continuation}\label{sec:AppendixAnalyticalresults}

In this appendix, we start by recalling a well-known asymptotic behavior of the gamma function; then we prove Lemma~\ref{lem:asympHCircle} and Theorem~\ref{thm:ResidueDeltaPosi} stated in Section~\ref{sec:DefinitionExistenceAnaliticalContinuation}.

\subsection{Asymptotic behavior}

It is well-known that 
\[\Gamma(b+as)\sim \sqrt{2\pi} \mathrm{e}^{-as}(as)^{as+b-1/2}\; \mathrm{as}\; |s|\to \infty \] 
which can be written as  
\begin{equation}\label{asymp:Gamma+}
	\begin{split}
		|\Gamma(b+as)|\sim &\sqrt{2\pi}\mathrm{e}^{-\Im(b)}a^{a\Re(s)+\Re(b)-1/2}\\
		&\qquad \cdot \mathrm{e}^{(a\Re(s)+\Re(b)-1/2)\mathrm{Log}(|s|)}\\
		&\qquad \qquad \cdot \mathrm{e}^{-a(\Re(s)+\Im(s)\arg(s))},
	\end{split}
\end{equation}
as $|s|\to \infty$ and $|\arg(b+as)|<\pi$, where $a>0$, $b \in \mathbb{C}$; see Equation~(5.11.7) in \cite{AskRoy10}. \\
In order to express the asymptotic formula of $\mathcal{H}^{m,n}_{p,q}(s)z^{-s}$,
we also need the asymptotic behavior of $\Gamma(b-as)$, which may be derived from that of  $\Gamma(b+as)$, as follows.\\
Let $s$ be any complex number with polar coordinates $\rho=|s|> 0$, $\theta \in (-\pi,\pi]$, and denote $\tilde{s}:=-s$. Then

\[   \arg(\tilde{s})=\begin{cases} \theta+\pi, & \theta \in (-\pi,0] \\ \theta-\pi, & \theta \in (0,\pi] \end{cases}\]
and we compactly write $\arg(\tilde{s})=\mathrm{sign}(\arg(s))(|\arg(s)|-\pi)$.
Hence, $|\Gamma(b-as)|=|\Gamma(b+a\tilde{s})|$ and 

\begin{equation} \label{asymp:Gamma-}
	\begin{split}
		|\Gamma(b+a\tilde{s})|\sim & \sqrt{2\pi}\mathrm{e}^{-\Im(b)}a^{-a\Re(s)+\Re(b)-1/2}\\
		& \quad \cdot \mathrm{e}^{(-a\Re(s)+\Re(b)-1/2)\mathrm{Log}(|s|)}\\
		&\quad \quad \cdot \mathrm{e}^{a(\Re(s)+\Im(s)(\mathrm{sign}(\arg(s))(|\arg(s)|-\pi)))},
	\end{split}
\end{equation}
as $|s|\to \infty$ and $|\arg(b+a\tilde{s})|<\pi$. Note that, for $a>0$ and $b \in \mathbb{R}$, the poles of $\Gamma(b-as)$ are located in $[b/a,+\infty)$. Therefore, the above asymptotic formula is valid only for $s$ such that $|\arg(s)|>0$. The asymptotic formulas~\eqref{asymp:Gamma+} and \eqref{asymp:Gamma-} can be found in Equations~(2.4.4) in \cite{Par01}.

\begin{proof}[Proof of Lemma \ref{lem:asympHCircle}]
	From Definition~\ref{def:H-function}, we recall that for $s \in \mathcal{L}_{\mathrm{i}\gamma \infty}$ and $z \in \mathbb{C}\backslash\{0\}$,
	
	\begin{equation}\label{eq:KernelandFactors}
		\mathcal{H}^{m,n}_{p,q}(s)= \frac{\prod_{j=1}^{m} \Gamma(b_j+s\beta_j)}{\prod_{i=n+1}^{p}\Gamma(a_i+s\alpha_i)} \cdot \frac{\prod_{i=1}^{n}\Gamma(1-a_i-s\alpha_i)}{ \prod_{j=m+1}^{q} \Gamma(1-b_j-s\beta_j)},
	\end{equation}
	and that 
	\begin{equation}\label{eq:zpowers}
		z^{-s}=\mathrm{e}^{-s(\mathrm{Log}(|z|)+\mathrm{i} \arg(z))}.
	\end{equation}
	To obtain the result in Equation~\eqref{eq:HKernelonCircle}, we start by computing the asymptotic behavior of each factor in Equation~\eqref{eq:KernelandFactors}, then the behavior of $z^{-s}$ in Equation~\eqref{eq:zpowers}, for $|s|\to\infty$, and finally we put them together.
	\begin{enumerate}
		\item We compute the asymptotic behavior of 
		\[\frac{\prod_{j=1}^{m} \Gamma(b_j+s\beta_j)}{\prod_{i=n+1}^{p}\Gamma(a_i+s\alpha_i)}. \]
If $s=\gamma + R\mathrm{e}^{\mathrm{i}\theta}$, with $|\arg(\theta)|<\pi$, then $\Gamma(b_j+s\beta_j)=\Gamma(b_j+\beta_j\gamma+\beta_jR\mathrm{e}^{\mathrm{i}\theta})$, $j=1,\dots,m$, and $\Gamma(a_i+s\alpha_i)=\Gamma(a_i+\alpha_i\gamma+\alpha_iR\mathrm{e}^{\mathrm{i}\theta})$, $i=n+1,\dots,p$.

   By Equation~\eqref{asymp:Gamma+}, rewriting the asymptotic formula in such a way that the first factor is a constant (in $R$) and ordering the remaining factors from the leading to the weakest one, we have that	
	\begin{equation*}
			\begin{split}
				|\Gamma(b_j+\beta_j\gamma+\beta_jR\mathrm{e}^{\mathrm{i}\theta})| \sim &  \sqrt{2\pi}\beta_j^{b_j+\beta_j\gamma-1/2} \mathrm{e}^{\beta_j\cos(\theta)R\mathrm{Log}(|R|)}\\
				& \quad \quad \cdot  \mathrm{e}^{[\beta_j\cos(\theta)\mathrm{Log}(\beta_j)-\beta_j(\cos(\theta)+\sin(\theta)\theta)]R}\\
				& \quad \quad \quad \quad \cdot \mathrm{e}^{(b_j+\beta_j\gamma-1/2)\mathrm{Log}(|R|)},
			\end{split}
		\end{equation*}
		as $ R\to\infty$, for $|\arg(\theta)|<\pi$, and $j=1,\dots,m$.
		The product reads
	\begin{equation*}
			\begin{split}
				\prod_{j=1}^m |\Gamma(b_j+\beta_j\gamma+\beta_jR\mathrm{e}^{\mathrm{i}\theta})|&\sim  (2\pi)^{m/2}\prod_{j=1}^m\left(\beta_j^{b_j+\beta_j\gamma-1/2}\right) \mathrm{e}^{(\sum_{j=1}^m\beta_j)\cos(\theta)R\mathrm{Log}(|R|)} \\
				& \quad \quad \cdot \mathrm{e}^{\left[\cos(\theta)\mathrm{Log}\left(\prod_{j=1}^m\beta_j^{\beta_j}\right)-(\cos(\theta)+\sin(\theta)\theta)(\sum_{j=1}^m\beta_j)\right]R} \\
				& \quad \quad \quad \quad \cdot \mathrm{e}^{\left(\sum_{j=1}^m b_j+ \gamma\sum_{j=1}^m\beta_j-m/2\right)\mathrm{Log}(|R|)},
			\end{split}
		\end{equation*}
		as $ R\to\infty$, for $|\arg(\theta)|<\pi$. By analog computations for $\prod_{i=n+1}^p |\Gamma(a_i+\alpha_i\gamma+\alpha_iR\mathrm{e}^{\mathrm{i}\theta})|$, as $ R\to\infty$, for $|\arg(\theta)|<\pi$, we get the following asymptotic behavior for the quotient
		\begin{equation*}
			\begin{split}
				\frac{ \prod_{j=1}^m |\Gamma(b_j+\beta_j\gamma+\beta_jR\mathrm{e}^{\mathrm{i}\theta})|}{\prod_{i=n+1}^p |\Gamma(a_i+\alpha_i\gamma+\alpha_iR\mathrm{e}^{\mathrm{i}\theta})|}& \sim  (2\pi)^{(m+n-p)/2}\frac{\prod_{j=1}^m \beta_j^{b_j+\beta_j\gamma-1/2} }{\prod_{i=n+1}^p \alpha_i^{a_i+\alpha_i\gamma-1/2} } \mathrm{e}^{a_1^*\cos(\theta)R\mathrm{Log}(|R|)}  \\
				& \cdot  \mathrm{e}^{\left[ \cos(\theta)\mathrm{Log}\left(\prod_{j=1}^m\beta_j^{\beta_j} \prod_{i=n+1}^p\alpha_i^{-\alpha_i}\right)-\left(\cos(\theta)+\sin(\theta)\theta\right)a_1^* \right]R} \\
				& \cdot \mathrm{e}^{\left(\sum_{j=1}^m b_j-\sum_{i=n+1}^p a_i + \gamma a_1^*-(m-p+n)/2 \right)\mathrm{Log}(|R|)},\\
			\end{split}
		\end{equation*}
  as $R \to \infty$, where $a_1^*=\sum_{j=1}^m\beta_j-\sum_{i=n+1}^p\alpha_i$; see Equation~(1.1.11) in \cite{SaiKil}.
		\item Now we study the asymptotic behavior of the second factor, i.e.
		\[ \frac{\prod_{i=1}^{n}\Gamma(1-a_i-s\alpha_i)}{ \prod_{j=m+1}^{q} \Gamma(1-b_j-s\beta_j)}.\]		
By Equation~\eqref{asymp:Gamma-}, for $i=1,\dots,n$, we may rewrite
  \begin{equation*}
			\begin{split}
				 |\Gamma(1-a_i-\alpha_i \gamma - \alpha_i R \mathrm{e}^{\mathrm{i}\theta})|  \sim & \sqrt{2\pi} \alpha_i^{1/2-a_i-\alpha_i \gamma} \mathrm{e}^{- \alpha_i \cos(\theta)R\mathrm{Log}(|R|) } \\
				& \quad \quad \cdot  \mathrm{e}^{\left[ \cos(\theta)\mathrm{Log}\left(\alpha_i^{-\alpha_i}\right)+(\cos(\theta)+\sin(\theta)\mathrm{sign}(\theta)(|\theta|-\pi))\alpha_i\right] R} \\
				& \quad \quad \quad \quad \cdot  \mathrm{e}^{\left(1/2- a_i - \gamma \alpha_i \right)\mathrm{Log}(|R|)} ,\\
			\end{split}
		\end{equation*}
as $R \to \infty$ for $|\arg(z)|>0$.
		Consequently, the product reads 
		\begin{equation*}
			\begin{split}
				\prod_{i=1}^n |\Gamma(1-a_i-\alpha_i \gamma - \alpha_i R \mathrm{e}^{\mathrm{i}\theta})|  \sim & (2\pi)^{n/2} \prod_{i=1}^n \alpha_i^{1/2-a_i-\alpha_i \gamma}  \mathrm{e}^{-R \cos(\theta)\mathrm{Log}(|R|)\sum_{i=1}^n \alpha_i } \\
				&  \cdot  \mathrm{e}^{\left[\cos(\theta)\mathrm{Log}\left(\prod_{i=1}^n\alpha_i^{-\alpha_i}\right)+(\cos(\theta)+\sin(\theta)\mathrm{sign}(\theta)(|\theta|-\pi))\sum_{i=1}^n\alpha_i\right] R} \\
				& \quad \quad \quad \quad \cdot  \mathrm{e}^{\left(-\sum_{i=1}^n a_i - \gamma\sum_{i=1}^n \alpha_i +n/2\right)\mathrm{Log}(|R|)}, \\
			\end{split}
		\end{equation*}
  as $R \to \infty$ for $|\arg(z)|>0$.
		We omit the details for $\prod_{j=m+1}^q |\Gamma(1-b_j-\beta_j \gamma - \beta_j R \mathrm{e}^{\mathrm{i}\theta})|$, for $R \to \infty$ and $|\arg(z)|>0$, as follows by similar considerations. Finally, the asymptotic behavior of the second factor, as $R \to \infty$, for $|\arg(z)|>0$, is given by 
		\begin{equation*}
			\begin{split}
				\frac{\prod_{i=1}^n |\Gamma(1-a_i-\alpha_i \gamma - \alpha_i R \mathrm{e}^{\mathrm{i}\theta})|}{\prod_{j=m+1}^q |\Gamma(1-b_j-\beta_j \gamma - \beta_j R \mathrm{e}^{\mathrm{i}\theta})|} & \sim   (2\pi)^{(n+m-q)/2} \prod_{i=1}^n \alpha_i^{1/2-a_i-\alpha_i \gamma}\prod_{j=m+1}^q \beta_j^{b_j+\beta_j \gamma-1/2}\\
				& \cdot  \mathrm{e}^{-R \cos(\theta)\mathrm{Log}(|R|)a_2^* } \\
				& \cdot  \mathrm{e}^{R\cos(\theta)\mathrm{Log}\left(\prod_{i=1}^n\alpha_i^{-\alpha_i}\prod_{j=m+1}^q\beta_j^{\beta_j}\right)} \\
				& \cdot \mathrm{e}^{R(\cos(\theta)+\sin(\theta)\mathrm{sign}(\theta)(|\theta|-\pi))a_2^*} \\
				& \cdot  \mathrm{e}^{\left(\sum_{j=m+1}^q b_j-\sum_{i=1}^n a_i - \gamma a_2^* +(n+m-q)/2\right)\mathrm{Log}(|R|)}, \\
			\end{split}
		\end{equation*}
where $a_2^*=\sum_{i=1}^n \alpha_i - \sum_{j=m+1}^{q}\beta_j$, see Equation~(1.1.12) in \cite{SaiKil}.\\
Hence, we have that the following asymptotic formula holds for $\mathcal{H}^{m,n}_{p,q}(s)$, as $R \to \infty$, for $s=\gamma + R\mathrm{e}^{\mathrm{i}\theta}$, for $\gamma \in \mathbb{R}$ and $\theta \in (- \pi,\pi)\backslash \{0\}$, 
\begin{equation} \label{eq:CaligraphicHasymptotic}
			\begin{split}
				\mathcal{H}^{m,n}_{p,q}(s)& \sim  (2\pi)^{m+n-(p+q)/2}\prod_{j=1}^q \beta_j^{b_j+\beta_j\gamma-1/2} \prod_{i=1}^p \alpha_i^{1/2-a_i-\alpha_i\gamma} \mathrm{e}^{\Delta\cos(\theta)\mathrm{Log}(|R|)R}\\
				& \cdot \mathrm{e}^{\left[\cos(\theta)\mathrm{Log}\left(\delta\right)-\cos(\theta)\Delta-\sin(\theta)\left(a_1^*\theta -a_2^*\mathrm{sign}(\theta)(|\theta|-\pi)\right)\right]R}
				\\
				& \cdot \mathrm{e}^{\left( \gamma\Delta  +\mu \right)\mathrm{Log}(|R|)},
			\end{split}
		\end{equation}
		where $\delta=\prod_{j=1}^q\beta_j^{\beta_j} \prod_{i=1}^p\alpha_i^{-\alpha_i}$, $\mu=\sum_{j=1}^q b_j-\sum_{i=1}^p a_i +(p-q)/2$ and $\Delta=a_1^*-a_2^*=\sum_{j=1}^q\beta_j-\sum_{i=1}^p\alpha_i$ are given in Equations~(1.1.9)-(1.1.10)-(1.1.13) in \cite{SaiKil}, respectively.
		
		\item For the factor $|z^{-s}|$, with $z\neq0$ and $s=\gamma +R\mathrm{e}^{\mathrm{i}\theta}$, with $R>0$ and $\theta \in (-\pi,\pi]$, we have that 
\begin{equation}\label{eq:powersasymptotic}
			|z^{-s}| = |z|^{-\gamma}\mathrm{e}^{-R(\cos(\theta)\mathrm{Log}(|z|)+\sin(\theta)\arg(z))} \end{equation}
   by using the complex logarithm $\log(z)=\mathrm{Log}(|z|) + \mathrm{i}\arg(z)$, where $\arg(z)\in (-\pi,\pi]$.\\
		
		\item Finally, putting together \eqref{eq:CaligraphicHasymptotic} and \eqref{eq:powersasymptotic}, with $s=\gamma + R\mathrm{e}^{\mathrm{i}\theta}$ and $\theta \in (-\pi,\pi)\backslash\{0\}$, yields
\begin{equation*}
			\begin{split}
				|\mathcal{H}^{m,n}_{p,q}(s)z^{-s}|&\sim  C \mathrm{e}^{\Delta\cos(\theta)\mathrm{Log}(|R|)R}\\
				& \cdot \mathrm{e}^{R\cos(\theta)\mathrm{Log}\left(\delta\right)}\mathrm{e}^{-R\cos(\theta)\Delta}\mathrm{e}^{-R\cos(\theta)\mathrm{Log}(|z|)}\\
				& \cdot \mathrm{e}^{-R\sin(\theta)\left(a_1^*\theta -a_2^*\mathrm{sign}(\theta)(|\theta|-\pi)\right)}\mathrm{e}^{-R\sin(\theta)\arg(z)} \\
				& \cdot \mathrm{e}^{\left(\gamma\Delta+\mu \right)\mathrm{Log}(|R|)},
			\end{split}
		\end{equation*}
		as $R \to \infty$, where
  \[ C=|z|^{-\gamma}(2\pi)^{m+n-(p+q)/2} \prod_{j=1}^q \beta_j^{b_j+\beta_j\gamma-1/2} \prod_{i=1}^p \alpha_i^{1/2-a_i-\alpha_i\gamma}  \]
and the claim \eqref{eq:HKernelonCircle} follows, by defining the parameters $\Upsilon_1=\cos(\theta)\Delta$, 
\[\Upsilon_2=\cos(\theta)\mathrm{Log}(\exp(\Delta)|z|/\delta)+\sin(\theta)\Theta
\] 
and $\Upsilon_3=\mu+\gamma\Delta$, with $\Theta=a_1^* \theta - a_2^* \mathrm{sign}(\theta)(|\theta|-\pi)+\arg(z)$.\hfill \qedhere 
	\end{enumerate}
\end{proof}

\begin{remark} 
The result in Lemma~\ref{lem:asympHCircle} coincides with the asymptotic formula in Equations~(1.2.11) and (1.2.12) in \cite{SaiKil}, if $s$ is restricted to the contour $\mathcal{L}=l_j$, $j=1,2$, or $\mathcal{L}=l_3$, respectively. 

In Sec.~2.4 of \cite{Par01}, the leading term for $\mathrm{Log}(|\mathcal{H}^{m,n}_{p,q}(s)z^s|)$ is computed for $s=R \mathrm{e}^{\mathrm{i}\theta}$, with $\theta=\pm \pi/2$, and for $\theta \to 0$.
	
\end{remark}

\subsection{Analytic continuation}

The following theorem assumes the conditions under which the Fox-$H$ functions are well-defined, that is, their integral representation given in Definition \ref{def:H-function} converges. This can be derived from the asymptotic formula given in Lemma~\ref{lem:asympHCircle} and by Theorem 1.1 in \cite{SaiKil}.

In the following, we prove that Fox-$H$ functions $H^{m,n}_{p,q}(\cdot)$ with $a^*>0$, $\Delta>0$ defined via $\mathcal{L}=\mathcal{L}_{\mathrm{i}\gamma \infty}$ has an analytic continuation on $\mathbb{C}\backslash\{ 0\}$.

\begin{proof}[Proof of Theorem \ref{thm:ResidueDeltaPosi}]
	Let $m,n,p,q \in \mathbb{N}$ and $a_i, b_j \in \mathbb{R}$ and for $\alpha_i,\beta_j \in (0,\infty)$ with $i=1,\dots,p $ and $j=1,\dots,q$ and $\mathcal{L}=\mathcal{L}_{\mathrm{i} \gamma \infty}$ with $\gamma \in \mathbb{R}$ and the contour separates the poles $b_{jl}$ in Equations~\eqref{def:polesa} with respect to the poles $a_{ik}$ in Equation~\eqref{def:polesb} so that $b_{jl}$ are on its left and $a_{ik}$ on its right. \\
	We consider a positive and increasing sequence $\{R_L : L\in \mathbb{N}\}$ such that $R_L \to \infty$ as $L \to \infty$ and the following arc $C_L^{-}$ does not cross any of the poles $b_{jl}$
	\[  C_L^{-}:= \left\{ z \in \mathbb{C}\,\bigg|\, z=R_L \mathrm{e}^{\mathrm{i} \theta}+\gamma\, , \theta \in \left[ - \pi, -\frac{\pi}{2}\right] \cup \left[\frac{\pi}{2} , \pi\right)  \right\},\]
	and 
	\[ I_L:= \left\{ z \in \mathbb{C}\,\bigg|\, \Re(z)=\gamma \,, \Im(z) \in \left[-R_L,R_L \right] \right\}.\]
	We define the closed curves $\gamma_L:=I_L \cup C_L^{-}$ traversed counterclockwise and for each curve $\gamma_L$ we define a region $A_L$ that encloses the curve $\gamma_L$ and such that the poles of $\mathcal{H}^{m,n}_{p,q}(\cdot)$ are inside each $A_L$ coincide with those inside $\gamma_L$.
	We note that the above curves and regions always exist because each pole $b_{jl}$ is an isolated singularity; see Definition 3.3.2 in \cite{Mars87}. Moreover, each curve $\gamma_L$ contains a finite number of poles of $\mathcal{H}^{m,n}_{p,q}(\cdot)$ and, for each pole $b_{jl}$, the index number $I(\gamma_L,b_{jl})=1$ for any $L$.\\
	By the residue theorem, we have that for any $L \in \mathbb{N}$ and for any $z \in \mathbb{C}\backslash \{0\}$ such that $|\arg(z)|< a^* \pi/2$:
	
	\[ \int_{\gamma_L} \mathcal{H}^{m,n}_{p,q}(z) z^{-s} \mathrm{d}s= \sum_{b_{jl}
		\in A_L} \mathrm{Res}(\mathcal{H}^{m,n}_{p,q}(s)z^{-s},b_{jl}),  \]
	where the right side is given by Equation~(1.3.5) in \cite{SaiKil}. Furthermore, we have 
	\[ \int_{\gamma_L} \mathcal{H}^{m,n}_{p,q}(z) z^{-s} \mathrm{d}s=  \int_{I_L} \mathcal{H}^{m,n}_{p,q}(z) z^{-s} \mathrm{d}s +  \int_{C_L^{-}} \mathcal{H}^{m,n}_{p,q}(z) z^{-s} \mathrm{d}s\]
	and we have $\int_{C_L^{-}} |\mathcal{H}^{m,n}_{p,q}(s) z^{-s}| \mathrm{d}s \to 0$ as $L \to \infty$, in analogy with Section 14.5 in \cite{Whit21} and using the asymptotic formula in Lemma~\ref{lem:asympHCircle}. We note that if $\Delta>0$, then the leading term of $|\mathcal{H}^{m,n}_{p,q}(s)z^{-s}|$ is $\exp(\Delta \cos(\theta) R_L \mathrm{Log}(|R_L|) )$, with $\Delta \cos(\theta)<0$ for $\theta \in \left[ - \pi, -\frac{\pi}{2}\right) \cup \left(\frac{\pi}{2} , \pi\right)$.  \\
	Thus, for each $z \in \mathbb{C}$ such that $|\arg(z)|< a^* \pi/2$ we have
	\[  \C \ni H^{m,n}_{p,q}(z)=\lim_{L \to \infty}\int_{\gamma_L} \mathcal{H}^{m,n}_{p,q}(z) z^{-s} \mathrm{d}s= \sum_{b_{jl} }\mathrm{Res}(\mathcal{H}^{m,n}_{p,q}(s)z^{-s},b_{jl}).  \]
	The above series representation holds, for $z \in \mathbb{C}\backslash \{0 \}$ such that $|\arg(z)|< a^* \pi/2$, and coincides with the series representation of Theorem~1.2 i) in \cite{SaiKil}, in case of $z \in \mathbb{C}\backslash\{0\}$, $\Delta>0$, and $\mathcal{L}=\mathcal{L}_{-\infty}$.
	
	By applying Corollary~6.1.3 in \cite{Mars87}, the claim of the theorem is proved.
\end{proof}

\section{Fox-$H$ random variables}\label{sec:AppendixFoxHRandomVariable}
In this Appendix, we collect some known properties of Fox-$H$ random variables (see Definition \ref{defHdengeneral}) that are used in Section \ref{sec:FoxfunctionsAsDensities}. More precisely, the product and powers of these random variables are studied; for a further insight, see \cite{Spri80}.
\begin{theorem}[cf.~Thm.~4.1 in \cite{CarSprin}]\label{thmproductH}
	Let $X_1,X_2,\dots,X_N$ be independent Fox-$H$ random variables on the same probability space $(\Omega, \mathcal{F}, \mathbb{P})$ with probability density functions $\varrho_{X_1}(x_1),\dots,\varrho_{X_N}(x_N)$, respectively of the form 
	\[\varrho_{X_j}(x_j)=\frac{1}{K_j} H^{m_j,n_j}_{p_j,q_j}\left[ c_j x_j \, \Bigg|\genfrac{}{}{0pt}{}{(a_{j1},\alpha_{j1}),\dots,(a_{jp_j},\alpha_{jp_j})}{(b_{j1},\beta_{j1}),\dots,(b_{jq_j},\beta_{jq_j})}\right], \quad  x_j >0,  \]
	and zero otherwise, for $j=1,2,\dots,N$.\\
	Then the probability density function of the random variable
	\[  Y=\prod_{j=1}^N X_j  \]
	is given by
	\[ \varrho_Y(y)=\left( \prod_{j=1}^N \frac{1}{K_j} \right) H^{m',n'}_{p',q'}\left[ \left(\prod_{j=1}^N c_j\right) y  \, \Bigg|\genfrac{}{}{0pt}{}{(a_{11},\alpha_{11}),\dots,(a_{Np_N},\alpha_{Np_N})}{(b_{11},\beta_{11}),\dots,(b_{Nq_N},\beta_{Nq_N})}\right], \quad y>0, \]
	and zero otherwise, with 
	\[
	m'=\sum_{j=1}^N m_j,\quad  n'=\sum_{j=1}^N n_j,\quad p'=\sum_{j=1}^N p_j\quad  \mathrm{and}\quad q'=\sum_{j=1}^N q_j.
	\]
	The sequence of parameters is $(a_{jv},\alpha_{jv})$, $v=1,2,\dots,n_j$ for $j=1,2,\dots,N$, followed by $v=n_j+1,n_j+2,\dots, p_j$ for $j=1,2,\dots,N$, and the sequence of parameters $(b_{jv},\beta_{jv})$, $v=1,2,\dots,m_j$ for $j=1,2,\dots,N$, followed by $v=m_j+1,m_j+2,\dots,q_j$ for $j=1,2,\dots,N$.
\end{theorem}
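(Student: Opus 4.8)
The plan is to argue via the Mellin transform, exploiting that it linearises products of independent positive random variables. First I would reduce to the case $N=2$ by induction on $N$: the case $N=1$ is trivial, and if the claim holds for $N-1$ factors then $Y=\big(\prod_{j=1}^{N-1}X_j\big)X_N$ is a product of two independent Fox-$H$ random variables, the first being a Fox-$H$ random variable by the inductive hypothesis. Throughout I write $\varrho_j:=\varrho_{X_j}$ and recall that, since each $\varrho_j$ is a density, its Mellin transform $(\mathcal{M}\varrho_j)(\cdot)$ is analytic on a vertical strip whose closure contains the line $\Re(s)=1$.

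For $N=2$ the density of $Y=X_1X_2$ is the multiplicative (Mellin) convolution
\[
\varrho_Y(y)=\int_0^\infty \varrho_1\!\left(\frac{y}{t}\right)\varrho_2(t)\,\frac{\mathrm{d}t}{t},\qquad y>0,
\]
so that $(\mathcal{M}\varrho_Y)(s)=(\mathcal{M}\varrho_1)(s)\,(\mathcal{M}\varrho_2)(s)$ on the intersection of the two fundamental strips. Next I would use that the Mellin transform of a Fox-$H$ density is, up to the normalising constant and a power of the scale, exactly the gamma quotient in Definition~\ref{def:H-function}: from the Mellin--Barnes representation and Mellin inversion, $\big(\mathcal{M}\,H^{m_j,n_j}_{p_j,q_j}[c_j\,\cdot\,]\big)(s)=c_j^{-s}\,\mathcal{H}^{m_j,n_j}_{p_j,q_j}(s)$ on the corresponding strip (cf.\ Theorem~2.2 in \cite{SaiKil}). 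Hence $(\mathcal{M}\varrho_Y)(s)=\frac{1}{K_1K_2}\,(c_1c_2)^{-s}\,\mathcal{H}^{m_1,n_1}_{p_1,q_1}(s)\,\mathcal{H}^{m_2,n_2}_{p_2,q_2}(s)$.

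The heart of the argument is then purely algebraic: the product $\mathcal{H}^{m_1,n_1}_{p_1,q_1}(s)\,\mathcal{H}^{m_2,n_2}_{p_2,q_2}(s)$ is again a quotient of the same shape, namely $\mathcal{H}^{m',n'}_{p',q'}(s)$ with $m'=m_1+m_2$, $n'=n_1+n_2$, $p'=p_1+p_2$, $q'=q_1+q_2$, provided the parameters are listed in the order prescribed in the statement: the first $n_j$ pairs $(a_{jv},\alpha_{jv})$ of each factor (contributing $\Gamma(1-a-s\alpha)$ to the numerator) precede the remaining ones (contributing $\Gamma(a+s\alpha)$ to the denominator), and symmetrically for the pairs $(b_{jv},\beta_{jv})$ with the first $m_j$ of each factor collected first. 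Matching the four gamma blocks of $\mathcal{H}^{m',n'}_{p',q'}$ against the product term by term gives the identity, and inverting the Mellin transform along a suitable vertical line yields
\[
\varrho_Y(y)=\frac{1}{K_1K_2}\,H^{m',n'}_{p',q'}\!\left[(c_1c_2)\,y \,\Bigg|\genfrac{}{}{0pt}{}{(a_{11},\alpha_{11}),\dots,(a_{Np_N},\alpha_{Np_N})}{(b_{11},\beta_{11}),\dots,(b_{Nq_N},\beta_{Nq_N})}\right],
\]
as claimed, the Fox-$H$ function being understood via the contour $\mathcal{L}=\mathcal{L}_{\mathrm{i}\gamma\infty}$ running along that line.

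The step I expect to require the most care is the contour bookkeeping. One must check that the two fundamental strips overlap (which follows since both $\varrho_j$ integrate to $1$, so $\Re(s)=1$ lies in the closure of each strip and, after a small shift if necessary, in the interior of their intersection), that along a vertical line $\Re(s)=\gamma$ in this common strip the product decays fast enough --- via Lemma~\ref{lem:asympHCircle} or the classical estimates for $\mathcal{H}$ on vertical lines --- for Mellin inversion to be valid and for the resulting integral to coincide with $H^{m',n'}_{p',q'}$, and that such a $\gamma$ separates the union of the ``left'' pole set from the union of the ``right'' pole set of the combined function. A minor additional caveat is the case in which poles of the two factors collide, producing poles of higher multiplicity; this does not affect the identity for $\mathcal{H}^{m',n'}_{p',q'}$ itself but would change the explicit series expansion of $H^{m',n'}_{p',q'}$, which is not needed here. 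Nonnegativity and unit mass of $\varrho_Y$ are automatic, being inherited from its probabilistic meaning.
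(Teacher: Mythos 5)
The paper itself gives no proof of this statement --- it is imported verbatim from Carter and Springer (Thm.~4.1 in \cite{CarSprin}) --- and your Mellin-transform argument is precisely the standard proof of that cited result: the Mellin transform converts the independent product into the product of the gamma quotients $c_j^{-s}\mathcal{H}^{m_j,n_j}_{p_j,q_j}(s)$, which is again a quotient of the same shape with the concatenated parameter lists, and inversion along a vertical line in the common fundamental strip returns the combined Fox-$H$ function. Your proposal is correct, and the points you flag for care (overlap of the strips around $\Re(s)=1$, decay on vertical lines, separation of the merged pole sets, possible higher-order poles) are exactly the right ones.
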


We now re-state the following result using an alternative method.  

\begin{theorem}[cf.~Thm.~4.2 in \cite{CarSprin}]\label{thmpowertH0inf}
	Let $X$ be a Fox-$H$ random variable on the probability space $(\Omega, \mathcal{F}, \mathbb{P})$ with probability density  
	\begin{equation}
		\label{eq:Hdenrv}\varrho_X(x)=\frac{1}{K} H^{m,n}_{p,q}\left[ c x \, \Bigg|\genfrac{}{}{0pt}{}{(a_1,\alpha_1),\dots,(a_p,\alpha_p)}{(b_1,\beta_1),\dots,(b_q,\beta_q)}\right], \quad x >0  \end{equation}
	and zero otherwise.\\
	Then the probability density function of the random variable
	\[  Y:= X^P,\quad P\in \mathbb{R}\backslash \{0 \}\;  \]
	is given:
	\begin{itemize}
		\item[1)] for $P>0$, by
		\[ \varrho_Y(y)= \frac{c^{P-1}}{K}  H^{m,n}_{p,q}\left[ c^P y \, \Bigg|\genfrac{}{}{0pt}{}{(a_i-\alpha_iP+\alpha_i,\alpha_iP)_{1,p}}{(b_j-\beta_jP+\beta_j,\beta_jP)_{1,q}}\right], \]
		for $y>0$, and zero otherwise, 
		\item[2)] for $P<0$, by
		\[ \varrho_Y(y)= \frac{c^{P-1}}{K}  H^{n,m}_{q,p}\left[ c^{P} y  \, \Bigg|\genfrac{}{}{0pt}{}{(1-b_j+\beta_jP-\beta_j,-\beta_jP)_{1,q}}{(1-a_i+\alpha_iP-\alpha_i,\alpha_{i}P)_{1,p}}\right], \]
		for $y>0$, and zero otherwise.
	\end{itemize}

\end{theorem}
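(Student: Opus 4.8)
The plan is to obtain $\varrho_Y$ from $\varrho_X$ by the elementary change-of-variables formula for a strictly monotone transformation of a random variable, and then to recast the result as a Fox-$H$ density by means of three standard structural identities for $H$-functions (see, e.g., Section~2.1 of \cite{SaiKil} or Chapter~1 of \cite{Mathai2009}): the reciprocal relation
\[
H^{m,n}_{p,q}\left[x\,\Bigg|\,\genfrac{}{}{0pt}{}{(a_i,\alpha_i)_{1,p}}{(b_j,\beta_j)_{1,q}}\right]=H^{n,m}_{q,p}\left[\frac{1}{x}\,\Bigg|\,\genfrac{}{}{0pt}{}{(1-b_j,\beta_j)_{1,q}}{(1-a_i,\alpha_i)_{1,p}}\right],
\]
the power-multiplication rule (Eq.~(2.1.4) in \cite{SaiKil})
\[
x^{\sigma}H^{m,n}_{p,q}\left[x\,\Bigg|\,\genfrac{}{}{0pt}{}{(a_i,\alpha_i)_{1,p}}{(b_j,\beta_j)_{1,q}}\right]=H^{m,n}_{p,q}\left[x\,\Bigg|\,\genfrac{}{}{0pt}{}{(a_i+\sigma\alpha_i,\alpha_i)_{1,p}}{(b_j+\sigma\beta_j,\beta_j)_{1,q}}\right],
\]
and the argument-power (scaling) rule, valid for any $\kappa>0$:
\[
H^{m,n}_{p,q}\left[x^{1/\kappa}\,\Bigg|\,\genfrac{}{}{0pt}{}{(a_i,\alpha_i)_{1,p}}{(b_j,\beta_j)_{1,q}}\right]=\kappa\, H^{m,n}_{p,q}\left[x\,\Bigg|\,\genfrac{}{}{0pt}{}{(a_i,\kappa\alpha_i)_{1,p}}{(b_j,\kappa\beta_j)_{1,q}}\right].
\]
These are identities between ordinary functions on $(0,\infty)$, so no attention to the Mellin--Barnes contour is needed while manipulating them.

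First I would treat $P>0$. Then $x\mapsto x^{P}$ is an increasing bijection of $(0,\infty)$ onto itself and $\varrho_X$ is supported in $(0,\infty)$, so the transformation formula gives $\varrho_Y(y)=\frac{1}{P}\,y^{1/P-1}\,\varrho_X\bigl(y^{1/P}\bigr)$ for $y>0$ and $0$ otherwise. Substituting \eqref{eq:Hdenrv}, writing $c\,y^{1/P}=(c^{P}y)^{1/P}$, and applying the scaling rule with $\kappa=P$ turns $H^{m,n}_{p,q}\bigl[(c^{P}y)^{1/P}\mid\cdots\bigr]$ into $P\,H^{m,n}_{p,q}\bigl[c^{P}y\mid(a_i,P\alpha_i)/(b_j,P\beta_j)\bigr]$, so that the factor $P$ cancels the $1/P$. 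The residual factor $y^{1/P-1}$ is then absorbed by the power-multiplication rule with $\sigma=\frac1P-1$: this produces the constant $(c^{P})^{-(1/P-1)}=c^{P-1}$ and shifts the first component of each pair by $(\frac1P-1)P\alpha_i=(1-P)\alpha_i$ and $(1-P)\beta_j$, i.e.\ to $a_i-\alpha_iP+\alpha_i$ and $b_j-\beta_jP+\beta_j$. Collecting everything gives exactly item~1).

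For $P<0$ I would reduce to the previous case. Now $x\mapsto x^{P}$ is a decreasing bijection of $(0,\infty)$, so $\varrho_Y(y)=-\frac{1}{P}\,y^{1/P-1}\,\varrho_X\bigl(y^{1/P}\bigr)$, with $-1/P>0$. Since the exponent $1/P$ is negative, I first write $c\,y^{1/P}=\bigl((c^{P}y)^{1/(-P)}\bigr)^{-1}$ and invoke the reciprocal relation, replacing $H^{m,n}_{p,q}$ by $H^{n,m}_{q,p}$ evaluated at $(c^{P}y)^{1/(-P)}$ with parameters $(1-b_j,\beta_j)/(1-a_i,\alpha_i)$; then the scaling rule with $\kappa=-P$ removes the positive exponent $1/(-P)$ and multiplies the scales $\beta_j,\alpha_i$ by $-P$, the factor $-P$ cancelling $-1/P$; finally the power-multiplication rule with $\sigma=\frac1P-1$ absorbs $y^{1/P-1}$, leaving the constant $c^{P-1}$, the argument $c^{P}y$, and the two parameter pairs shifted as in item~2). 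Equivalently, one may first compute the law of $X^{-1}$ from the reciprocal relation and then apply item~1) to $X^{-1}$ with the positive exponent $-P$, which reproduces item~2).

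The only genuinely delicate point is the bookkeeping of the scalar factors and of the two parameter pairs: one must check that $c$ and $P$ (resp.\ $-P$) combine exactly so that the argument becomes $c^{P}y$, the normalising constant in front becomes $c^{P-1}$, and the pairs transform as stated (for $P<0$, as the reciprocal-relation images of the $P>0$ pairs with the scales $\alpha_i,\beta_j$ rescaled by $-P$). No analytic obstacle arises: the hypotheses of the three identities ($\alpha_i,\beta_j>0$ and $c>0$) are met under the assumptions on $X$, and $\varrho_Y$ is automatically a nonnegative, normalised density, being the law of $X^{P}$.
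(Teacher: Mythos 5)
Your argument is correct and is essentially the paper's own proof: the paper likewise obtains $\varrho_Y(y)=\tfrac{1}{P}\,y^{1/P-1}\varrho_X(y^{1/P})$ by change of variables and then invokes the power and argument-scaling identities (2.1.4)--(2.1.5) of \cite{SaiKil} (deferring rational $P>0$ to Carter--Springer and leaving $P<0$ implicit), while you carry out the same manipulations uniformly for all real $P\neq 0$, adding the reciprocal relation to handle $P<0$. One small remark: your computation for case 2) correctly produces the lower scale parameters $-\alpha_i P>0$; the $\alpha_i P$ in the printed statement is a sign misprint (the scales of an $H$-function must be positive), so your result matches the intended statement rather than the literal one.
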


\begin{proof}
 For rational and positive $P$ it is given in Thm.~4.2 in \cite{CarSprin}. 
 For irrational and positive $P$ the proof can be made considering that the density of $Y:=X^P$ is given by $\varrho_{Y}(y)=y^{-1-1/P} \varrho_X(y^{1/P})/P$, $y>0$. The result follows by Equations~(2.1.4) and (2.1.5) in \cite{SaiKil}.
\end{proof}

\begin{remark}\label{rem:thmProductPowerHdensityon01}
\begin{itemize}
    \item[i)] Theorems \ref{thmproductH} and \ref{thmpowertH0inf} hold also for the Fox-$H$ random variable whose support is $(0,1)$. The proof follows easily by choosing the parameters $(a_{j1},\alpha_{j1}),\dots, (b_{jq_j},\beta_{jq_j})$ such that the corresponding Fox-$H$ function $H^{m_j,n_j}_{p_j,q_j}(x)$ is zero for $x>1$ for $j=1,\dots,N$, using the same argument as in \cite{Kir}, p.~13.
    \item[ii)] As we are interested in Fox-$H$ density with moments of any order, then in the above theorems we have $n,n_j=0$, $j=1,\dots,N$.
    \end{itemize}
\end{remark}

\subsection*{Acknowledgments}

The authors are grateful to Prof.~Francesco Mainardi for providing us with helpful material and for engaging in discussionsduring the "Workshop on Fractional Calculus, Special Functions, and Applications" and to G.~Pagnini and R.~Garra for the fruitful suggestions they give us. Furthermore, the research was partially carried over during a pleasant stay at the Isaac Newton Institute in Cambridge for the program Fractional Differential Equations  and also during the workshop "Linnaeus Workshop on Stochastic Analysis and Applications 2024". L.C. thanks Prof. Da Silva for his really kind hospitality within his research stay at the Centro de Ci\^encias Matem\'aticas in 2023.

\subsection*{Founding}
This work has been partially funded by the Center for Research in Mathematics
and Applications (CIMA) related to Statistics, Stochastic Processes
and Applications (SSPA) group, through the grant UIDB/MAT/04674/2020
of FCT-Funda{\c c\~a}o para a Ci{\^e}ncia e a Tecnologia, Portugal. The first and second authors were supported by the Prin Project 2022, n.202277N5H9.

\subsection*{Conflict of Interest Statement}

The authors declare that they have no known competing financial interests or personal relationships that could have appeared to influence the work reported in this paper.

\subsection*{Data Availability Statement}

Data sharing is not applicable to this article as no datasets were generated or analyzed during the current study.

\bibliography{FoxHDensityCompletelyMonotoneGeneralizedWrightFunctions}

\end{document}